\newtheorem{thm}{Theorem}[section]
\newtheorem{conj}[thm]{Conjecture}
\newtheorem{cor}[thm]{Corollary}
\newtheorem{lemma}[thm]{Lemma}
\newtheorem{prop}[thm]{Proposition}
\theoremstyle{definition}
\newtheorem{defn}[thm]{Definition}
\newtheorem{rem}[thm]{Remark}
\newtheorem{fact}[thm]{Fact}
\definecolor{AfonsoBlue}{RGB}{30,65,123}
\DeclareMathOperator{\poly}{poly}
\DeclareMathOperator{\fr}{F}
\DeclareMathOperator{\vol}{vol}
\newcommand{\R}{\mathbb R}
\newcommand{\Prob}{\mathbb P}
\newcommand{\E}{\mathbb E}
\newcommand{\EE}{\mathbb E}
\newcommand{\edball}{\mathbb{B}_2^d}
\newcommand{\pdball}{\mathbb{B}_p^d}
\newcommand{\rv}[1]{\mathbf{#1}}
\newcommand{\abs}[1]{\vert#1\vert}
\newcommand{\Abs}[1]{\left\vert#1\right\vert}
\newcommand{\norm}[1]{\left\lVert#1\right\rVert}
\newcommand{\norminp}[1]
{\left\lVert#1\right\rVert_{\mathcal{I}_p}}
\newcommand{\norminx}[2]
{\left\lVert#1\right\rVert_{\mathcal{I}_{#2}}}
\newcommand{\pvar}[1]{\sigma_{(#1,\mathcal{I}_p)}}
\newcommand{\evar}[1]{\sigma_{(#1, \mathcal{I}_2)}}
\newcommand{\typetwopvar}{\sigma_{T_2 \mathcal{I}_p}}
\newcommand{\normf}[1]{\left\lVert#1\right\rVert_{\fr}}
\DeclareMathOperator{\sym}{sym}
\newcommand{\eps}{\varepsilon}
\newcommand{\md}{\mathbf{d}}
\newcommand{\op}{\mathsf{op}}
\newcommand{\Dec}{\mathsf{Dec}}
\newcommand{\calC}{\mathcal{C}}
\newcommand{\I}{\mathcal{I}}
\newcommand{\p}{\mathbb{P}}
\newcommand{\polylog}{\mathsf{polylog}}
\renewcommand{\H}{\mathcal{H}}
\newcommand{\NNN}{\mathcal{N}}
\newcommand{\CCC}{\mathcal{C}}
\DeclareMathOperator{\Tr}{Tr}
\newcommand{\cleq}[1]{\underset{#1}{\lesssim}}
\newcommand{\ceq}[1]{\underset{#1}{\asymp}}
\newcommand{\cgeq}[1]{\underset{#1}{\gtrsim}}
\newcommand{\diag}{\mathsf{diag}}
\title{A Geometric Perspective on the Injective Norm of Sums of Random Tensors}
\author{
Afonso S.\ Bandeira\thanks{Department of Mathematics, ETH Z\"urich. \texttt{bandeira@math.ethz.ch}.} 
 \and 
Sivakanth Gopi\thanks{Microsoft Research, Redmond. \texttt{sivakanth.gopi@microsoft.com}.}
 \and
Haotian Jiang\thanks{Department of Computer Science, University of Chicago. \texttt{jhtdavid@uchicago.edu}. Part of the work was done while being a Postdoc Researcher in the Algorithms Group at Microsoft Research Redmond.}
\and
Kevin Lucca\thanks{Department of Mathematics, ETH Z\"urich. \texttt{kevin.lucca@ifor.math.ethz.ch}.}
\and
Thomas Rothvoss\thanks{Department of Mathematics and
Paul G. Allen School of CSE, University of Washington. \texttt{rothvoss@uw.edu}. Supported by NSF grant 2318620 ``\emph{AF: SMALL: The Geometry of Integer Programming and Lattices}''.}
}
\date{}
\begin{document}

\maketitle

\begin{abstract}
Matrix concentration inequalities, intimately connected to the Non-Commutative Khintchine inequality, have been an important tool in both applied and pure mathematics. We study tensor versions of these inequalities, and establish non-asymptotic inequalities for the $\ell_p$ injective norm of random tensors with correlated entries. In certain regimes of $p$ and the tensor order, our tensor concentration inequalities are nearly optimal in their dimension dependencies. We illustrate our result with  applications to problems including structured models of random tensors and matrices, tensor PCA, and connections to lower bounds in coding theory.

Our techniques are based on covering number estimates as opposed to operator theoretic tools, which also provide a geometric proof of a weaker version of the Non-Commutative Khintchine inequality, motivated by a question of Talagrand. 
\end{abstract}

\newpage 

\tableofcontents

\newpage

\section{Introduction}

\emph{The following introduction is mainly targeted to a Mathematics audience. An introduction and technical overview focusing more on the Theoretical Computer Science motivations behind this work is provided in Appendix~\ref{sec:introTCS}.}\medskip

The main object of study of this paper is the \emph{$\ell_p$ injective norm} of a jointly Gaussian random $d$-dimensional, $r$-order, tensor $T\in(\R^d)^{\otimes r}$ with an arbitrary entrywise covariance structure. We denote the $\ell_p$ injective norm\footnote{It is also natural to consider a symmetric version of the $\ell_p$ injective norm where the vectors $x_1, \cdots, x_r$ are forced to be the same, i.e., $\norm{T}_{\sym(\mathcal{I}_p)} \coloneqq \sup_{x \in \pdball} \abs{ T[x, \ldots, x]}$. For symmetric tensors, these two definitions are equivalent up to a factor depending only on the order $r$  (see \Cref{prop:symnormequiv}).} of $T$ by
    \begin{equation}\label{eq:injnormdef}
    \norminp{T} \coloneqq \sup_{\norm{x_1}_p, \ldots, \norm{x_r}_p \leq 1} \langle T, x_1 \otimes \ldots \otimes x_r \rangle,  \end{equation}
where $d$ and $r\geq 2$ are integers and $p\geq2$.

This problem has been well studied for the specific case of iid standard Gaussian entries and $p=2$, in that situation it is known that the expected injective norm is of the order of $C_r\sqrt{d}$, for a constant $C_r$ depending on $r$. The order of the constant $C_r$ was shown to be bounded from above by $C\sqrt{r\log(r)}$ in~\cite{TS14,NDT10} and sharper estimates for the precise constant have been computed recently in~\cite{DM24}.

Any tensor $T\in(\R^d)^{\otimes r}$ with jointly Gaussian entries can be written in a Gaussian series form as
\begin{equation}\label{eq:Tasgaussianseries}
T = \sum_{k=1}^n g_kT_k,
\end{equation}
where $g_k\sim \NNN(0,1)$ are iid and $T_i$ are deterministic tensors.
In the specific euclidean matrix case ($p=2$ and $r=2$),~\eqref{eq:injnormdef} corresponds to the operator norm of a random Gaussian matrix. An important consequence of the celebrated \emph{Non-Communitative Khintchine inequalities} of Lust-Piquard and Pisier (see~\cite{LP91,Lus86}), which was noted in~\cite{Rud96o}, states that, for $X = \sum_{k=1}^n g_kA_k$,
\begin{equation}\label{eq:NCK}
\EE \Big\| \sum_{k=1}^n g_kA_k \Big\|_{\mathrm{op}} 
\lesssim \sqrt{\log d} \, 
\max\bigg\{\Big\| \sum_{k=1}^n A_k^TA_k \Big\|_{\mathrm{op}}^{\frac12}  ,\Big\| \sum_{k=1}^n A_kA_k^T \Big\|_{\mathrm{op}}^{\frac12}  \bigg\}. 
\end{equation}
Notably,~\eqref{eq:NCK} is tight in two senses: the right-hand side is also a lower bound without the logarithmic factor and, moreover, the logarithmic factor is required for some choices of matrices. We point the reader to~\cite{BBvH23} for a recent line of work trying to better understand when the logarithmic factor is not required.

However, the question of estimating the injective norm of~\eqref{eq:Tasgaussianseries} becomes elusive when either $r \neq 2$ or $p \neq 2$, which are natural extensions of~\eqref{eq:NCK} that appear in coding theory~\cite{Bri16,Gop18} or dispersive partial differential equations~\cite{BDNY24}. 

The main reason for this bottleneck is the fact that all known proofs of~\eqref{eq:NCK} use operator-theoretic tools,\footnote{The study of Schatten-$p$ norms (traces of powers) and the fact that they approximate the spectral norm well for large $p$ are key ingredients in the proof.} which are thus far unavailable when either $r \neq 2$ or $p \neq 2$. We mention two notable results in this direction that do not assume independent entries, the first one is by Rudelson \cite{Rud96c}, who proved~\eqref{eq:NCK} (with a logarithmic factor in $n$ rather than $d$) for the specific case when the matrices $A_k$ are symmetric rank-1 using Majorizing Measures. The second result is a volumetric bound by Latała~\cite{Lat06} in the case $p=2$, which was used to derive two-sided moment inequalities for Gaussian chaoses (see \Cref{rem:latalareference}). Rudelson's original argument in \cite{Rud96c} can be adapted to rank-1 tensors (for $p=2$)~\cite{Kevinsthesis}, but it is unclear how to go beyond rank-1, or $p > 2$. In fact, there is no known geometric argument to establish~\eqref{eq:NCK}, even for the euclidean matrix case, when the summands are high rank. This is another core motivation of this paper, and in fact, a question in Talagrand's book~\cite{Tal14} (section 16.10).

\subsection{Main Results}

Our main contribution is a geometric argument, involving covering numbers and Gaussian process theory, to bound the expected value of the $\ell_p$ injective norm of~\eqref{eq:Tasgaussianseries} for a full range of $r\geq 2$ and $ 2 \leq p <\infty$, where operator theoretic tools are unavailable. 

For general tensors it is more convenient to formulate the upper bound in terms of the so called Type-2 constant of the corresponding Banach space.

\begin{defn}[Type-2 constant for $\ell_p$ injective norm of order $r$ tensors]\label{def:type2constant}
Given $r\geq 2$ an integer and $2 \leq p < \infty$ we define $\CCC_{r,p}(d)$ the Type-2 constant of the $\ell_p$ injective norm of order $r$ tensors to be the minimal number, such that for all positive integers $n$ and tensors $T_1, \ldots, T_n \in (\R^d)^{\otimes r}$ we have
\begin{equation}\label{eq:type2def}
    \E \Big\| \sum_{k=1}^n \eps_k T_k\Big\|_{\I_p}^2 \leq \CCC_{r,p}(d)^2 \sum_{k=1}^n \norminp{T_k}^2.
\end{equation}
The $\eps_1, \ldots, \eps_n$ are i.i.d. Rademacher random variables (i.e.: $ \eps_k = 1$ or $\eps_k = -1$ with probability $\frac12$ each). 
\end{defn}

While our main contributions involve Gaussian series of tensors~\eqref{eq:Tasgaussianseries}, a standard argument based on using Jensen's inequality on the magnitude of the Gaussian shows that moments of the norm of a Rademacher series are upper bounded by the ones of the Gaussian series. In particular, they imply the following estimates. 

\begin{thm}\label{cor:boundtype2constant}
Let $r\geq 2$ an integer, $2 \leq p < \infty$, and $\CCC_{r,p}(d)$ the Type-2 constant of the $\ell_p$ injective norm of order $r$ tensors (see Definition~\ref{def:type2constant}) then
\begin{equation}
d^{\frac12 - \frac1p}\cleq{r,p}  \CCC_{r,p}(d) 
\cleq{r,p} d^{\frac12- \frac1{\max\{p,2r\}}}\log d.
\end{equation}
(The expression $f \lesssim_{r,p} g$ says that the inequality $ f \leq C_{r,p} g$ holds for some constant $C_{r,p}$ that only depends on $r$ and $p$.)
\end{thm}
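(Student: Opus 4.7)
The claim has two directions which I would treat separately. For the lower bound, take the explicit family $T_k := e_k \otimes e_1^{\otimes (r-1)} \in (\R^d)^{\otimes r}$ for $k=1,\ldots,d$. Each $\|T_k\|_{\I_p}=1$, attained at $x_1=e_k,\,x_2=\cdots=x_r=e_1$. For the Rademacher sum $S := \sum_{k} \eps_k T_k$ one computes
\[
\langle S,\,x_1 \otimes \cdots \otimes x_r\rangle \;=\; \langle \eps, x_1\rangle \prod_{j \ge 2}(x_j)_1,
\]
and maximising subject to $\|x_j\|_p \le 1$ (by Hölder for the first factor, and $|(x_j)_1|\le \|x_j\|_p$ for the others) gives $\|S\|_{\I_p}=\|\eps\|_{p^*}=d^{1-1/p}$ \emph{deterministically}. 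Plugging into~\eqref{eq:type2def} yields $\CCC_{r,p}(d)^2 \;\ge\; d^{2-2/p}/d \;=\; d^{1-2/p}$, which is the desired lower bound.

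For the upper bound, the Gaussian--Rademacher comparison noted in the excerpt (Jensen applied to $|g_k|$) reduces the task to bounding $\EE\|\sum_k g_k T_k\|_{\I_p}$ for iid standard Gaussians $g_k$. After normalising so that $\sum_k \|T_k\|_{\I_p}^2 = 1$, this equals $\EE \sup_{x \in K} G(x)$ for $K=(\pdball)^r$ and the centered Gaussian process $G(x) := \sum_k g_k T_k[x_1,\ldots,x_r]$, whose increments have variance $\sum_k (T_k[x]-T_k[y])^2$. The telescoping identity
\[
T_k[x]-T_k[y] \;=\; \sum_{j=1}^r T_k[x_{<j},\,x_j-y_j,\,y_{>j}]
\]
dominates the Gaussian metric by a sum of $r$ single-slot metrics; each depends only on $x_j-y_j$ through a contraction of $T_k$ against the remaining coordinates, and each such contraction lies in $\ell_{p^*}^d$ with norm at most $\|T_k\|_{\I_p}$.

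I would then apply Dudley's entropy integral (or a generic-chaining refinement) to $\sup_K G$, reducing the task to covering $\pdball$ in these $\ell_{p^*}$-type metrics. The key geometric ingredient is a two-scale covering estimate for $\pdball$: at small scales, the volumetric bound $\log N(\pdball,\|\cdot\|_p,\eps) \lesssim d\log(1/\eps)$; at coarser scales, a Maurey-style sparse cover that approximates vectors in $\pdball$ by $s$-sparse vectors, where the sparsity $s$ is tuned jointly to $p$, $r$, and the scale $\eps$. Balancing these regimes across the $r$ telescoped slots produces the exponent $\tfrac12-\tfrac1{\max\{p,2r\}}$; the $\log d$ factor is the standard Dudley artefact. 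The main obstacle will be executing this multi-scale optimisation cleanly, so that $\max\{p,2r\}$ emerges precisely from the interplay between the small-scale $d$-dimensional entropy, the large-scale sparse cover, and the product structure across the $r$ tensor slots --- this is the point where operator-theoretic tools are no longer available and a purely geometric covering argument must do the work.
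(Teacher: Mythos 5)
Your lower bound is correct and is essentially the paper's own construction (the paper takes $T_k=e_1^{\otimes(r-1)}\otimes e_k$, the same family up to a harmless reordering of slots; it reaches $\|\sum_k\eps_kT_k\|_{\I_p}\geq d^{1-1/p}$ deterministically just as you do, then divides by $\sum_k\|T_k\|_{\I_p}^2=d$).

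Your upper bound, however, is a plan rather than a proof, and the plan rests on an ingredient that does not obviously work and is not what the paper uses. Two specific gaps. First, the telescoping into ``single-slot'' metrics is valid as an inequality (and the paper does use it, in \Cref{lemma:cauchyschwarzdistancebound}), but it only yields the \emph{crude} bound $\md(u,v)\lesssim_r\pvar{0}\|u-v\|_p$, and feeding that into Dudley with the standard volumetric entropy $\log N(\pdball,\|\cdot\|_p,\eps)\lesssim d\log(1/\eps)$ recovers only the trivial $\sqrt{d}\,\pvar{0}$ bound of \Cref{prop:trivialsigmastarbound}, i.e.\ exponent $\tfrac12$, not $\tfrac12-\tfrac1{\max\{p,2r\}}$. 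You cannot decouple the $r$ slots more cleanly because each contraction $T_k[x_{<j},\cdot,y_{>j}]$ depends on all the other slot variables, not just slot $j$. Second, the ``Maurey-style sparse cover'' you invoke at coarse scales is not a natural fit here: for $p\geq 2$ vectors in $\pdball$ are typically spread out and are not well approximated by $s$-sparse vectors in $\ell_p$, and in any case what must be covered is $\pdball$ in the Gaussian semimetric $\md$, not in $\|\cdot\|_p$. You assert that balancing sparsity against $p$, $r$, and the scale will ``produce the exponent $\tfrac12-\tfrac1{\max\{p,2r\}}$,'' but this is exactly the step that is left unexecuted, and nothing in the sketch explains why $\max\{p,2r\}$ rather than, say, $\max\{p,r\}$ should emerge.

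In the paper, the coarse-scale covering bound comes from a genuinely different geometric mechanism: one takes a maximal separated set, surrounds each point $x_i$ by $x_i+R\pdball$ for a possibly \emph{large} radius $R$, uses a probabilistic distance estimate (\Cref{cor:expectedpdistance}, which crucially needs the summands to be symmetric and diagonal-free, arranged via the symmetric embedding of \Cref{subsection:symbed}) to show that a constant fraction of $x_i+R\pdball$ lies within the Markov radius in $\md$, and then uses the Ball--Pisier $2$-uniform convexity inequality (\Cref{thm:weakparallelogram}, \Cref{cor:halvinglpball}) to argue that at least half of $x_i+R\pdball$ stays inside $\sqrt{p-1+R^2}\,\pdball$, so the disjoint pieces all fit in a ball only marginally larger than $\pdball$. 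Finally, the $2r$ in the exponent does not arise from any covering balance at all: it enters when the variance parameters $\pvar{q}$ are converted to $\typetwopvar$ via \Cref{cor:type2var}, which depends on the Frobenius-to-injective comparison of \Cref{cor:normtransition}, a consequence of the Hardy--Littlewood/Praciano-Pereira bound on multilinear forms (\Cref{thm:boundedmultilinear}). None of these ingredients are present in your sketch, so the upper bound remains unproven as written.
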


\begin{rem}\label{rem:nckimprovedtype2}
    In the matrix case ($r=2$) it is possible to improve upon the type constant estimate from \Cref{cor:boundtype2constant} by applying Hölder's inequality to the noncommuative Khintchine inequality.\footnote{This would however be a proof using operator theoretic tools (as it would need noncommuative Khintchine inequality).}
    $$ \E \Big\|\sum_{k=1}^n g_k A_k \Big\|_{\I_p} \leq d^{1- \frac 2 p} \E \Big\|\sum_{k=1}^n g_k A_k\Big\|_{\I_p} \lesssim \sqrt{ \log d} \cdot d^{1- \frac 2 p}\sqrt{\sum_{k=1}^n \norminx{A_k}{2}^2}  $$
    Since $\norminx{A_k}{2} \leq \norminp{A_k}$, this inequality implies $\CCC_{2,p}(d) \lesssim \sqrt{ \log d} \cdot d^{1- \frac 2 p}$, which is better than \Cref{cor:boundtype2constant} when $2 \leq p \leq \frac{8}{3}$.
\end{rem}

Note that for $p \geq 2r$ the upper and lower bounds in \Cref{cor:boundtype2constant} match up to a logarithmic factor (in terms of their dependence on $d$). For $r=2$ and $p=2$, one can easily adapt the Non-Commutative Khintchine inequality~\eqref{eq:NCK} to show that $\CCC_{2,2}(d)\lesssim \sqrt{\log(d)}$, meaning that our upper bound is suboptimal by a $d^{\frac14}$ factor (excluding logarithmic factors). Both $d^{\frac12 - \frac1p}$ and $d^{\frac12- \frac1{\max\{p,r\}}}$ would be natural conjectures for the correct polynomial dependency on $d$, and would match Non-Commutative Khintchine in the euclidean matrix case, unfortunately establishing either upper bound appears to be out of reach of our current tools. In comparison a classical $\eps$-net argument would give $d^{\frac12}$ as an upper bound for $\CCC_{r,p}(d)$.

Our estimates are more accurately written in terms of various tensor parameters that provide sharper control than the sum of squared injective norms (the right-hand-side of~\eqref{eq:type2def}). Among other improvements, they correspond to parameters involving norms of sums of squares of matrices (such as in~\eqref{eq:NCK}) rather than sums of squares of norms.\footnote{The sum of squared norms had already appeared in \cite{Tom74,AW02} in the context of random matrices as a variance parameter, which is not as precise as the one given in~\eqref{eq:NCK}, that gained attraction after it appeared in matrix concentration inequalities~\cite{Oli10,Tro10}.} 
Our estimates also imply better bounds for the inhomogenous independent entry model, as well as models of structured random matrices (see \Cref{sec:examples}).
For the sake of exposition, throughout the majority of the paper we focus on symmetric tensors, as the parameters we introduce are simpler to define in this case.

\begin{defn}[$\star_q$ product]\label{defn:introproduct}
    Let $A,B \in (\R^d)^{\otimes r}$ be tensors. For $0 \leq q \leq r$ we define a $d$-dimensional order $2r-2q$ tensor $A \star_q B$ with entries
    $$ (A \star_q B)_{i_1, \ldots, i_{2r-2q}} \coloneqq \sum_{j_1,\ldots, j_q =1}^d A_{i_1, \ldots, i_{r-q},j_1, \ldots, j_q} B_{j_1, \ldots, j_q,i_{r-q+1}, \ldots, i_{2r-2q}}.$$
\end{defn}

Note that in the matrix case ($r=2$), for $A$ and $B$ $d\times d$ symmetric matrices, $\star_q$ corresponds to familiar matrix operations: $A\star_0B=A\otimes B$ corresponds to the Kronecker product (seen as an order $4$ tensor), $A\star_1B=AB$ to the classical matrix product, and $A\star_2B=\Tr(A^{\mathsf{T}}B)$ to the Hilbert-Schmidt inner product.

\begin{defn}[Variance Parameters]\label{defn:intovariances}
    Let $T \in (\R^d)^{\otimes r}$ be a symmetric random jointly Gaussian tensor and $2 \leq p < \infty $. For $0 \leq q \leq r$ we define the parameters
    $$ \pvar{q}^2 \coloneqq \norminp{ \E[ T \star_q T ]}.$$
\end{defn}

In the case $p=r=2$ the parameter $\pvar{1}$ is the variance parameter appearing in~\eqref{eq:NCK}, meanwhile $\pvar{0}$ also frequently appears in the literature\footnote{We defined the weak variance in a different, but equivalent form to the one in~\cite{Tro15}, see also \Cref{lemma:variances}.} and is often referred to as ``weak variance"~\cite{Tro15}. \Cref{lemma:variances} shows
$$ \evar{1}^2 = \Big\| \sum_{k=1}^n A_k^2 \Big\|_{\mathrm{op}} \qquad \textrm{and} \qquad \evar{0}^2 = \sup_{u,v \in \edball} \sum_{k=1}^n (u^\mathsf{T}A_k v)^2 $$
in the matrix case. Formally stated, our main theorem provides the following upper bound:

\begin{thm}[Main Theorem]\label{thm:masterthm}
    Let $T \in (\R^d)^{\otimes r}$ be a symmetric random jointly Gaussian tensor with $\E[T] = 0$ and $2 \leq p < \infty$.
    \begin{equation}\label{eq:intromainthm} 
    d^{\frac1p -\frac12} \E \norminp{T} \cleq{r,p} (\log d) \,  \pvar{1} +  \max_{2 \leq q \leq r} \pvar{q}^{\frac1q} \pvar{0}^{\frac{q-1}{q}},
    \end{equation}
    where the parameters in the right-hand side are those of \Cref{defn:introproduct} and \ref{defn:intovariances}.
\end{thm}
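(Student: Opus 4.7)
The plan is to interpret $\norminp{T}$ as the supremum of a centered Gaussian process indexed by the $\ell_p$-unit ball and apply a multiscale chaining argument in the process's canonical metric. For symmetric $T$ set $X_u := \langle T, u^{\otimes r}\rangle$, so that $\norminp{T}\ceq{r}\sup_{u\in \pdball} X_u$ via the symmetric/asymmetric equivalence referenced in the footnote after~\eqref{eq:injnormdef}. Using the telescoping identity
\[
u^{\otimes r} - v^{\otimes r} \;=\; \sum_{k=1}^{r} u^{\otimes(k-1)}\otimes (u-v)\otimes v^{\otimes(r-k)},
\]
the variance $\E(X_u-X_v)^2$ decomposes into a sum of contractions of $\E[T\otimes T]$ that are precisely captured by the $\star_q$ products of \Cref{defn:introproduct} and hence controlled by the parameters $\pvar{q}$. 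Roughly, a term in which the ``small'' vector $(u-v)$ appears $r-q$ times contributes an increment whose magnitude scales like a power of $\|u-v\|_p$ multiplied by a factor governed by $\pvar{q}$.

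For the covering estimates I would use the volumetric bound $\log N(\pdball,\varepsilon \pdball)\lesssim d\log(1+1/\varepsilon)$ together with the inclusions $\edball\subseteq \pdball\subseteq d^{1/2-1/p}\edball$, valid for $p\ge 2$. The rescaling $d^{1/p-1/2}$ on the left-hand side of~\eqref{eq:intromainthm} is the natural normalization that makes these two geometries compatible and ensures that only the structural parameters $\pvar{q}$, and no residual polynomial factor of $d$, appear on the right-hand side beyond the logarithm.

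The main step is a multiscale Dudley/generic-chaining argument at dyadic scales $\varepsilon_\ell\asymp 2^{-\ell}$: at each scale one picks the index $q\in\{0,\ldots,r\}$ whose bound on the increment variance is tightest, and one sums the resulting contributions. The $q=1$ level produces the $(\log d)\pvar{1}$ term, where the logarithm comes from Dudley's integral applied to the full entropy of $\pdball$; for each $q\ge 2$, optimizing the crossover scale at which $\pvar{q}$ begins to beat the ambient bound $\pvar{0}$ yields the geometric-mean combination $\pvar{q}^{1/q}\pvar{0}^{(q-1)/q}$, and taking the maximum over $q\in\{2,\ldots,r\}$ covers all possible dominant regimes. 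The hardest part is the bookkeeping: producing admissible partitions of $\pdball$ so that at every chaining level both the net cardinality and the increment standard deviation are simultaneously controlled by the correct $\pvar{q}$, avoiding the spurious $d^{1/2}$ losses that a naive single-scale $\varepsilon$-net would produce.
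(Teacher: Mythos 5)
Your high-level plan — view $\norminp{T}$ as the supremum of a Gaussian process over $\pdball$ and apply Dudley's entropy integral — matches the paper. But the key step, the covering number estimate, is where your proposal has a genuine gap.

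You propose to bound the increment variance $\E(X_u-X_v)^2$ pointwise by expanding $u^{\otimes r}-v^{\otimes r}$ and pairing each term with an appropriate $\pvar{q}$, and then to cover $\pdball$ in $\ell_p$-balls. This cannot work. The $\pvar{q}$ parameters for $q\ge1$ control Frobenius norms of contracted tensors, so bounding $\langle T_k, v^{\otimes r-q}\otimes w^{\otimes q}\rangle^2$ by $\normf{T_kv^{\otimes r-q}}^2\,\norm{w}_2^{2q}$ and converting $\norm{w}_2\le d^{1/2-1/p}\norm{w}_p$ introduces a $d^{q(1-2/p)}$ loss; the only conversion-free pointwise bound replaces all $\pvar{q}$ by $\pvar{0}$, giving $\md(u,v)\le r\,\pvar{0}\norm{u-v}_p$. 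Plugged into Dudley with $\ell_p$-ball covering this yields exactly the ``Slepian bound'' $\sqrt{d}\,\pvar{0}$ of \Cref{prop:trivialsigmastarbound}, which the paper explicitly identifies as insufficient (see the discussion at the end of \Cref{section:preliminaries}). There is no scale at which a deterministic increment bound lets $\pvar{q}$ ($q\ge1$) enter without the $d^{q(1-2/p)}$ penalty, so your multiscale bookkeeping has nothing to optimize over.

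The paper's covering argument is of a different nature: it is a volume bound directly in the $\md$ metric, not in the $\ell_p$ metric. One takes a maximal separated set, inflates each center $x_i$ to a ball $x_i+t\,\pdball$ for a scale $t$ that may be much larger than $1$ (so the packing balls mostly lie \emph{outside} $\pdball$), and shows that a constant fraction of each inflated ball stays $\md$-close to $x_i$. The $\pvar{q}$-parameters with $q\ge1$ enter only through the \emph{expected} distance $\E_{\rv{b}}[\md(x_i,x_i+t\rv{b})]$ for a uniform $\rv{b}\in\pdball$ (\Cref{cor:expectedpdistance}), which is smaller than the worst case by a polynomial-in-$d$ factor. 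Obtaining this gain requires (i) reducing to symmetric \emph{diagonal-free} tensors via the symmetric embedding (\Cref{subsection:symbed}), so that mixed moments of $\rv{b}$ cancel in \Cref{lemma:expectedpnormcontraction}; and (ii) tightening the enclosing body from the naive $(1+t)\pdball$ to $\sqrt{p-1+t^2}\,\pdball$ using $2$-uniform convexity of $\ell_p$ (\Cref{cor:halvinglpball}), which is what makes the volume count close. Both of these are absent from your proposal. In short, your sketch reproduces the geometry that gives the weaker $\sqrt{d}\,\pvar{0}$ estimate, but misses the probabilistic averaging over inflated balls and the convexity halving that make the stated variance parameters actually appear.
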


\begin{par}
    For $p=2$ it may be possible to obtain \Cref{thm:masterthm} without the logarithmic factor in front of $\pvar{1}$ by modifying results in~\cite{Lat06}, see \Cref{rem:latalareference}.
    
    We finish the subsection with a few remarks about symmetric tensors. If one does not strive for optimal dependencies on the order $r$ in upper bounds, the symmetry requirement does not lose generality. In fact, by considering symmetric embeddings of tensors, which are tensor-analogues of the hermitian dilation~\eqref{eq:hermitiandilation}, we can reduce the problem to studying a symmetric object with similar norm properties. We will elaborate on this further in later sections (see \Cref{subsection:symbed}). 
\end{par}

\subsection{Notation and Terminology}
\label{subsec:notation}

We use $\E$ to denote the expectation of a random variable and $\Prob$ for the probability of an event. We use $\vol(\cdot)$ for the Lebesgue measure in $\R^d$. We will often write $\rv{g}_1, \ldots, \rv{g}_n$ for independent standard Gaussian scalar random variables and $\rv{b}$ for a random vector distributed according to the normalized Lebesgue measure on some compact set in $\R^d$ (which is usually given from the context). For a vector $v \in \R^d$ we define $ \norm{v}_p \coloneqq (\sum_{i=1}^d \abs{v_i}^p)^{\frac{1}{p}}$ and use $\pdball$ for the unit ball of the norm $\norm{\cdot}_p$. The tensor $v^{\otimes r} \in (\R^{d})^{\otimes r}$ is an order $r$ tensor with entries $(v^{\otimes r})_{i_1, \ldots, i_r} = \prod_{q=1}^r v_{i_q}$. Similarly for $u_1, \ldots , u_r \in \R^d$ we define $(u_1 \otimes \cdots \otimes u_r)_{i_1, \ldots, i_r} = \prod_{q=1}^r (u_q)_{i_q}$. For tensors $A, B \in (\R^d)^{\otimes r}$ their entry-wise dot product is defined by
$$ \langle A, B \rangle \coloneqq \sum_{i_1, \ldots, i_r=1}^d A_{i_1, \ldots, i_r} B_{i_1, \ldots, i_r}.$$
Moreover, for $0 \leq q \leq r$ their generalized product $A \star_q B \in (\R^d)^{\otimes (2r-2q)} $ is given by
$$ (A \star_q B)_{j_1, \ldots, j_{2r-2q}} \coloneqq \sum_{i_1, \ldots, i_q=1}^d A_{j_1, \ldots, j_{r-q}, i_1, \ldots, i_q} B_{i_1, \ldots, i_q, j_{r-q+1}, \ldots, j_{2r-2q}} $$
if $q > 0$ and 
$$ (A \star_0 B)_{j_1, \ldots, j_{2r}} \coloneqq A_{j_1, \ldots, j_{r}} B_{ j_{r+1}, \ldots, j_{2r}}.$$
It should be noted that this operation is indeed bilinear and in the case $q=r$ we have $A \star_r B = \langle A, B \rangle$ and if $q=1$ and $r=2$ then $A \star_1 B$ is standard matrix multiplication. The product $A \odot B \in (\R^d)^{\otimes r}$ denotes the entry-wise product. We will see tensors as multilinear maps on $\R^d$ too, as it can be more convenient to prove statements using this notation:
$$ A[u_1, \ldots, u_r] \coloneqq \sum_{i_1,\ldots, i_r=1 }^d A_{i_1, \ldots, i_r}(u_1)_{i_1} \cdots (u_r)_{i_r} = \langle A, u_1 \otimes \cdots \otimes u_r \rangle $$
This allows contracting tensors with vectors, for $v \in \R^d$ we write $Av^{\otimes 0} \coloneqq A$ and 
\begin{align*} Av^{\otimes q} \coloneqq (Av^{\otimes {q-1}})[v, \cdot, \ldots, \cdot]
\end{align*}
for $q \leq r$, so $Av^{\otimes q}$ becomes an order $r-q$ tensor.
The Frobenius norm is given by $\normf{A} \coloneqq \sqrt{\langle A, A \rangle}$ and the injective $\ell_p$ tensor norm is written as
$$ \norminp{A} \coloneqq \sup_{u_1, \ldots, u_r \in \pdball}A[u_1, \ldots, u_r] .$$ 
We denote the set of all permutations on the set $\{ 1, \ldots, r \}$ by $S_r$. The tensor $A$ is called \emph{symmetric} if for all permutations $\tau \in S_r$ and all $1 \leq i_1, \ldots, i_r \leq d$ we have $T_{i_1, \ldots i_r} = T_{i_{\tau(1)}, \ldots i_{\tau(r)}} $. The symbols $\lesssim, \lesssim_r, \lesssim_{r,p}$ are inequalities that hold up to a factor that are respectively universal constants, constants depending on $r$, constants depending on $r$ and $p$. If both $\lesssim$ and $\gtrsim$ hold, we also use the symbols $\asymp, \asymp_r, \asymp_{r,p}$ in a similar fashion.
\subsection{Overview of Techniques}

\begin{par}
    We give a brief informal presentation of the external machinery used to derive our results, a more detailed discussion can be found in \Cref{section:preliminaries}. If $T \in (\R^d)^{\otimes r}$ is a symmetric jointly Gaussian tensor, the random variable
    $$ \langle T, x_1 \otimes \cdots \otimes x_r \rangle$$
    is also Gaussian and thus the injective norm of $T$ can be seen as the supremum of a Gaussian process. Using symmetry arguments,\footnote{We will use a generalization of the hermitian dilation discussed in Subsection~(\ref{subsection:symbed}), this step only loses a constant factor depending on the order.} it is possible to reduce the problem of estimating~\eqref{eq:injnormdef} to studying the following expected supremum:
    $$ \E \sup_{u \in \pdball} g_u, \quad g_u \coloneqq \langle T, u^{\otimes r} \rangle$$
    As we do not aim for optimal logarithmic factors, we can use Dudley's entropy integral (see \Cref{thm:entropyintegral}) to bound the expected supremum of this Gaussian process
    $$ \E \sup_{u \in \pdball} g_u \lesssim \int_0^\infty \sqrt{\log \mathcal{N} (\pdball,\md, \eps)} \,d\eps. $$
    Here $\mathcal{N} (\pdball,\md, \eps)$ denotes the smallest cardinality of an $\eps$-covering of the set $\pdball$ with respect to the metric
    $$ \md(u,v) = \E [(g_u - g_v)^2]^{\frac12}.$$
    We estimate the covering numbers of $\pdball$ using a volumetric argument. In particular, we will have to estimate the volume of balls with respect to the metric $\md$ that are intersected with ``halved" $\ell_p$ balls, more specifically the ``half" of an $\ell_p$ ball that retains a small $\ell_p$ norm (see \Cref{cor:halvinglpball}). To properly make sense of this, we use an inequality on $\ell_p$ spaces which had been an unpublished result by Ball and Pisier until it was generalized to Schatten classes in \cite{LBC94}:
    $$ \bigg( \frac{\norm{x-y}_p^p + \norm{x+y}_p^p}{2} \bigg)^{2/p} \leq \norm{x}_p^2 + (p-1)\norm{y}_p^2 $$
    holds for $p \geq 2$ and the reverse inequality holds for $1 \leq p \leq 2$. After a sequence of computations we arrive at \Cref{thm:masterthm}. To deduce \Cref{cor:boundtype2constant} from this result, we use comparison inequalities between entry-wise norms on tensors and $\ell_p$ injective norms (see \Cref{thm:boundedmultilinear}), which have been extensively studied in for example \cite{Per81,DS16,HL34}. These inequalities imply
    $$ \normf{T} \cleq{r} d^{\max \{r/p-1/2,0 \}}\norminp{T}$$
    for any tensor $T \in (\R^d)^{\otimes r}$ and $p \geq 2$, which will allow us to transition from the variance parameters to the sum of squared injective norms.
\end{par}

\subsection{Warm-Up: A Crude Non-Commutative Khintchine Inequality }\label{sec:technique}

\begin{par}
    This section serves as an informal summary of the key ideas presented in the paper by focusing on a simplified setting. We consider $p=r=2$ and $n=d$, we have matrices $A_1', \ldots, A_d' \in \R^{d \times d}$ with operator norm at most $1$, so $\norminx{A_k'}{2} \leq 1$ for all $1 \leq k \leq d$. We will prove the (suboptimal) bound
    $$ \E \Big\|\sum_{ k=1}^d \rv{g}_k A_k'\Big\|_{\I_p} \lesssim d^{\frac34}.$$
    Recall that the non-commutative Khintchine inequality gives the better upper bound of $O(\sqrt{d \log d})$ in this setting. Nonetheless, our proof here is purely geometric.

    Without loss of generality, we replace the matrices $A_k'$ by their hermitian dilation, which does not change the operator norm of the matrix 
    and only increases the dimension $d$ by a factor of $2$, so we redefine
    \begin{equation}\label{eq:hermitiandilation}
    A_k \coloneqq \begin{pmatrix}
        0 & A_k' \\
        {A_k'}^\mathsf{T} & 0
    \end{pmatrix}.
    \end{equation}
    Matrices of this form also satisfy $\sup_{\norm{u}_2 \leq 1} u^\mathsf{T}Au = \sup_{\norm{u}_2 \leq 1} \abs{u^\mathsf{T}Au}=\norminx{A}{2}$. (The first equality follows from the fact that the sign of $u^\mathsf{T}Au$ can always be changed by changing the sign of the first half of the coordinates of $u$ and the second equality holds since $A$ is symmetric.) Hence, we want to estimate
    $$ \E \sup_{\norm{u}_2 \leq 1} \sum_{k=1}^d \rv{g}_ku^\mathsf{T} A_ku$$
    using Dudley's entropy integral~(\ref{thm:entropyintegral}). The natural distance on the Gaussian process is given by
    $$\md(u,v) = \sqrt{\sum_{k=1}^d (u^\mathsf{T} A_ku- v^\mathsf{T} A_kv)^2}$$
    Instead of finding the covering numbers $\mathcal{N}(\edball, \md, \eps)$ we find the size of a maximal $\eps$-separated set $D$, which is also an $\eps$-covering of $\edball$ (indeed, if there was a point with distance greater than $\eps$ from $D$, then the set would not be a maximal seperated set). Let $x_1, \ldots, x_m \in \edball$ be a maximal $\eps$-separated set with respect to $\md$, then the balls
    $$ B_\md(x_i, \textstyle \frac{\eps}{2}) \coloneqq \{ x \in \R^d \colon \: \md(x,x_i) < \frac{\eps}{2} \}$$
    are pairwise disjoint for $1 \leq i \leq m$. Namely, since $\md$ is a semi-metric on $\R^d$,  
    $$\eps \leq \md(x_i,x_j) \leq  \md(x_i,x) + \md(x, x_j) < \frac{\eps}{2} + \frac{\eps}{2}  $$
    would otherwise lead to a contradiction for $i \neq j$. We intersect these balls with euclidean half-spheres of radius $R>0$
    $$ G_i \coloneqq B_\md(x_i, \textstyle \frac{\eps}{2}) \cap \left \{ x \in (x_i + R \cdot \edball ) \colon \:  \langle x-x_i, x_i \rangle \leq 0 \right \}.$$
    The sets $G_i$ are also pairwise disjoint. Moreover, the additional condition $\langle x-x_i, x_i \rangle \leq 0$ implies
    $$ \norm{x}_2^2 = \norm{x-x_i}_2^2 + 2\langle x-x_i, x_i \rangle + \norm{x_i}_2^2 \leq R^2 + \norm{x_i}_2^2,$$
    and hence $G_i$ is also contained in the set $ \sqrt{1+R^2} \cdot \edball$. By additivity of the Lebesgue measure we therefore have
    $$ m \cdot \hspace{-1mm} \min_{1 \leq i \leq m} \{  \vol(G_i) \} \leq \sum_{i=1}^m \vol(G_i) \leq \vol \left( \sqrt{1+R^2} \cdot \edball \right).$$
    Since $\abs{D} = m$ is the cardinality of maximal $\eps$-separated set, we can rearrange this inequality to get
    $$ \mathcal{N}(\edball, \md, \eps) \leq \abs{D} \leq   \frac{ (1+R^2)^{\frac{d}{2}} \vol( \edball )}{\min_{1 \leq i \leq m} \{  \vol(G_i) \} }.$$
    It remains to choose an appropriate $R>0$ to make the fraction above small. We choose $R = \min \{ C^{-1}\eps, \sqrt{C^{-1}\eps} \}$ for a sufficiently large chosen constant $C$. To get a lower bound on $\min_{1 \leq i \leq m} \{  \vol(G_i) \}$ we compute the expected distance between a random point $\rv{y} = x_i + R\rv{b}$ uniformly distributed in $x_i + R \cdot \edball$.
    $$  \E [\md(x_i,\rv{y}) ]^2 \leq  \E [\md(x_i,\rv{y})^2 ] = \sum_{k=1}^d \E[( \rv{y}^\mathsf{T} A_k \rv{y}- x_i^\mathsf{T} A_k x_i)^2] = \sum_{k=1}^d \E[( 2R x_i^\mathsf{T} A_k \rv{b}- R^2 \rv{b}^\mathsf{T} A_k \rv{b})^2]  $$
    We skip the precise computation here and do it rigorously in the more general setting in the following sections. For the estimation we mainly rely on the facts $\E[\rv{b}_j^2] \leq \frac{1}{d}$, $\E[\rv{b}_j\rv{b}_{j'}]= 0 $ for $j \neq j'$ and that the matrices $A_k$ have norm at most $1$ and are diagonal-free. The next inequality is valid for the universal constant $C>0$ from before, if it was chosen to be sufficiently large:
    $$ \E [\md(x_i,\rv{y}) ] \leq \frac{C}{8}\max \{ R, R^2\} = \frac{\eps}{8} $$
    Therefore by Markov's inequality we have
    $$ \Prob[ \md(x_i,\rv{y}) \geq \textstyle \frac{\eps}{2}] \leq \frac{1}{4}$$
    and thus
    $$ \vol( B_\md(x_i, \textstyle \frac{\eps}{2}) \cap (x_i + R \cdot \edball ) ) \geq \frac{3}{4}\vol(x_i + R \cdot \edball ).$$
    Since the set $S_i = \left \{ x \in (x_i + R \cdot \edball ) \colon \: \langle x-x_i, x_i \rangle > 0 \right \}$ is a half-sphere, we get
    $$ \vol(G_i) \geq \vol( B_\md(x_i, \textstyle \frac{\eps}{2}) \cap (x_i + R \cdot \edball ) )  - \vol(S_i) \geq \frac{3}{4}\vol(x_i + R \cdot \edball ) - \frac{1}{2}\vol(x_i + R \cdot \edball ).$$
    So by translation invariance and homogeneity of the Lebesgue measure it follows
    $$ \vol(G_i) \geq \frac{R^d}{4}\vol(\edball).$$
   This yields the estimate
   $$ \mathcal{N}(\edball, \md, \eps) \leq \frac{ (1+R^2)^{\frac{d}{2}} \vol( \edball )}{\frac{1}{4} R^d \vol( \edball )} \leq 4 e^{\frac{d}{2R^2}} =  4 e^{\frac{d}{2C\min \{ \eps^2 , \eps \}}},$$
   which allows us to bound the log covering numbers.
   $$ \sqrt{\log(\mathcal{N}(\edball, \md, \eps))} \lesssim 1 + \frac{\sqrt{d}}{\min \{ \sqrt{\eps},\eps \} }$$
   This estimate performs better when $\eps$ is large, but in the regime $0 < \eps \leq 1$ we will replace the estimate by 
   $$ \mathcal{N}(\edball, \md, \eps) \lesssim\Big( \frac{3d}{\eps} \Big)^d,$$
   which follows from the inequality $\md(u,v) \lesssim \sqrt{d}\norm{u-v}_2$ and using the covering numbers of the euclidean sphere. Plugging these bounds into the entropy integral then gives
   $$ \int_{0}^\infty \sqrt{\log(\mathcal{N}(\edball, \md, \eps))} \, d \eps \lesssim d^{\frac{3}{4}} ,$$
\end{par}
where we compute the above integration only up to $\varepsilon = \sqrt{d}$ in above, as the radius of $\edball$ in $\md$ is at most $\sqrt{d}$.

\section{Applications of Our Results}
\label{sec:examples}

\begin{par}
In this section, we present a few applications of \Cref{thm:masterthm} to different tensor models.
We discuss strengths and weaknesses of the results obtained from \Cref{thm:masterthm} and compare them with other known approaches. It is important to keep in mind that our bounds follow from a completely general theorem that does not exploit additional model assumptions.  
\end{par}

\subsection{The Independent Entry Model and Tensor PCA}
\label{subsec:ind_ent_tensor_PCA} 

We study the class of tensors with independent entries, where the variances of the entries potentially differ. We discuss the implications of \Cref{thm:masterthm} for these tensors and its application to the setting of censored (or partial information) tensor PCA. The proofs are deferred to the end of this subsection for the sake of exposition. 

For a symmetric Gaussian tensor with nonhomogeneous independent entries, the parameters from \Cref{thm:masterthm} can be expressed in terms of its variance tensor as follows: 

\begin{thm}[Nonhomogeneous independent entry model]\label{thm:indepentryapplication}
    Let $T \in (\R^d)^{\otimes r}$ be a symmetric Gaussian tensor with $\E[T] =0$, such that the entries $T_{i_1, \ldots, i_r}$ are independent for $i_1 \leq i_2 \leq \ldots \leq i_r$. Define the variance tensor $A\in (\R^d)^{\otimes r}$ as $A_{i_1, \ldots,i_r} \coloneqq \E[T_{i_1, \ldots,i_r}^2]$ and let $\mathbf{1} \in \R^d$ denotes the vector with all-$1$ coordinates. Then we have
    $$ d^{\frac1p -\frac12} \E \norminp{T} \cleq{r,p} (\log d) \,  \norminx{A \mathbf{1}}{p/2}^{\frac12}   +  \max_{2 \leq q \leq r} \norminx{A \mathbf{1}^{\otimes q}}{p/2} ^{\frac{1}{2q}} \norminx{A }{p/2} ^{\frac{q-1}{2q}}. $$
\end{thm}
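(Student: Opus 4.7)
The plan is to apply \Cref{thm:masterthm} directly, which reduces the task to estimating each variance parameter $\sigma_{(q, \I_p)}^2 = \norminp{\E[T \star_q T]}$ for $0 \leq q \leq r$ in terms of the (non-negative) contracted tensors $A \mathbf{1}^{\otimes q}$ and $A$.

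First I would make the computation of $\E[T \star_q T]$ explicit. Because $T$ is symmetric, its entries indexed by multisets of size $r$ are independent, so $\E[T_I T_J] = A_I \mathbf{1}[I = J]$ when $I, J$ are viewed as multisets. Expanding the definition of $\star_q$ and cancelling the common contracted multiset $\{j_1, \ldots, j_q\}$ yields
\[ \E[T \star_q T]_{i_1, \ldots, i_{2r-2q}} = \mathbf{1}\big[\{i_1, \ldots, i_{r-q}\} = \{i_{r-q+1}, \ldots, i_{2r-2q}\}\big] \cdot (A \mathbf{1}^{\otimes q})_{i_1, \ldots, i_{r-q}}. \]
For $q = r$ this degenerates to the scalar $\langle A, \mathbf{1}^{\otimes r} \rangle$, consistent with the stated bound.

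Next I would bound $\norminp{\E[T \star_q T]}$. Since $A \mathbf{1}^{\otimes q}$ has non-negative entries, the sup defining $\norminp{\cdot}$ is attained on coordinate-wise non-negative vectors. Labeling the first $r-q$ test vectors $u_1, \ldots, u_{r-q}$ and the last $r-q$ as $v_1, \ldots, v_{r-q}$, I would bound the multiset indicator from above by $\sum_{\pi \in S_{r-q}} \mathbf{1}[i_{r-q+k} = i_{\pi(k)} \text{ for all } k]$, which is valid against non-negative integrands. After relabeling, each $\pi$-summand equals $(A \mathbf{1}^{\otimes q})[u_1 \odot v_{\pi^{-1}(1)}, \ldots, u_{r-q} \odot v_{\pi^{-1}(r-q)}]$. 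Hölder's inequality (valid for $p \geq 2$) gives $\norm{u_k \odot v_{\pi^{-1}(k)}}_{p/2} \leq \norm{u_k}_p \norm{v_{\pi^{-1}(k)}}_p \leq 1$, so each summand is bounded by $\norminx{A\mathbf{1}^{\otimes q}}{p/2}$. Summing over the $(r-q)!$ permutations yields $\sigma_{(q, \I_p)}^2 \lesssim_r \norminx{A \mathbf{1}^{\otimes q}}{p/2}$ for $q \geq 1$, and analogously $\sigma_{(0, \I_p)}^2 \lesssim_r \norminx{A}{p/2}$.

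Plugging these bounds into \Cref{thm:masterthm}, the $\sigma_{(1, \I_p)}$ term becomes $\norminx{A\mathbf{1}}{p/2}^{1/2}$, and for each $q \geq 2$ the combination $\sigma_{(q, \I_p)}^{1/q} \sigma_{(0, \I_p)}^{(q-1)/q}$ becomes $\norminx{A\mathbf{1}^{\otimes q}}{p/2}^{1/(2q)} \norminx{A}{p/2}^{(q-1)/(2q)}$, matching the statement. The main obstacle is the combinatorial step: writing the upper bound on the multiset indicator as a sum over $S_{r-q}$ and then recognizing that Hölder lets us trade the $\ell_p$ constraints on pairs of test vectors for a single $\ell_{p/2}$ constraint on their pointwise product. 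Both moves rely essentially on the non-negativity of $A$.
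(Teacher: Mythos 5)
Your proposal is correct and follows essentially the same route as the paper: both compute the expected $\star_q$ contraction entry-by-entry using independence and symmetry to obtain the multiset-indicator formula, bound the indicator by a sum over $S_{r-q}$ of exact-match indicators (valid against non-negative integrands), rewrite each permutation term via coordinatewise products, and use the Cauchy--Schwarz/Hölder step $\norm{u \odot v}_{p/2} \leq \norm{u}_p \norm{v}_p$ before plugging into \Cref{thm:masterthm}. The paper's phrasing in terms of permutations $\tau$ with $(i_1,\ldots,i_{r-q}) = (j_{\tau(1)},\ldots,j_{\tau(r-q)})$ is the same multiset condition you state; the overcounting when indices repeat is absorbed in both arguments by non-negativity.
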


In the statement above, recall that $A \mathbf{1}^{\otimes q}$ means contracting $A$ with $\mathbf{1}^{\otimes q}$ as in \Cref{subsec:notation}. 
In the case of $p=2$, this bound simplifies further, since the $\ell_1$ injective norm is the largest entry of the tensor. We illustrate this simplification when $A$ is the adjacency tensor of a hypergraph $H = ([d],E)$, so $A_{i_1, \ldots, i_r} = 1$ if $\{i_1, \ldots, i_r\} \in E$ and $A_{i_1, \ldots, i_r} = 0$ otherwise. 

\begin{cor}\label{cor:hypergraphindepentry}
    Consider the setting in \Cref{thm:indepentryapplication} and assume that $A$ is the adjacency tensor of a hypergraph $H = ([d],E)$. Let $\Delta_j$ be the maximum number of hyperedges in $E$, such that their joint intersection has cardinality at least $j$, then
    $$  \E \norminx{T}{2} \cleq{r} (\log d) \,  \Delta_{r-1}^{\frac12}  +  \max_{2 \leq q \leq r} \Delta_{r-q} ^{\frac{1}{2q}}. $$
\end{cor}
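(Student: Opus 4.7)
The plan is to invoke Theorem~\ref{thm:indepentryapplication} directly with $p=2$ and unpack each of the tensor parameters $\norminx{A\mathbf{1}^{\otimes q}}{1}$ in graph-theoretic language. The key observation is that for $p=2$, the $\ell_{p/2}=\ell_1$ injective norm of an order-$s$ tensor is simply its largest entry in absolute value, so the problem reduces to bounding the maximum entries of the contractions $A, A\mathbf{1}, A\mathbf{1}^{\otimes 2},\dots,A\mathbf{1}^{\otimes r}$.

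The main computation is the following. For indices $i_1,\dots,i_{r-q} \in [d]$, the entry
\[
(A\mathbf{1}^{\otimes q})_{i_1,\dots,i_{r-q}} \;=\; \sum_{j_1,\dots,j_q \in [d]} A_{i_1,\dots,i_{r-q},j_1,\dots,j_q}
\]
vanishes unless the $i_k$ are pairwise distinct, in which case it counts the number of ordered tuples $(j_1,\dots,j_q)$ such that $\{i_1,\dots,i_{r-q},j_1,\dots,j_q\}$ is an edge of $H$. Each edge $e \in E$ containing the set $S = \{i_1,\dots,i_{r-q}\}$ contributes exactly $q!$ such orderings (the orderings of $e\setminus S$), so
\[
(A\mathbf{1}^{\otimes q})_{i_1,\dots,i_{r-q}} \;=\; q!\cdot \bigl|\{e \in E : S \subseteq e\}\bigr| \;\leq\; q!\,\Delta_{r-q}
\]
by the definition of $\Delta_{r-q}$. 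Taking the maximum over indices gives $\norminx{A\mathbf{1}^{\otimes q}}{1} \leq q!\,\Delta_{r-q}$ for every $1\leq q \leq r$, and in particular $\norminx{A\mathbf{1}}{1} \leq \Delta_{r-1}$ and $\norminx{A}{1} = 1$ (since $A$ is $\{0,1\}$-valued).

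The final step is to plug these bounds into Theorem~\ref{thm:indepentryapplication}. Since $p=2$, the prefactor $d^{1/p - 1/2}$ is trivial, and the $q=0$ parameter $\norminx{A}{1}^{(q-1)/(2q)} = 1$. The constants $(q!)^{1/(2q)}$ for $2\leq q\leq r$ are bounded uniformly by a constant depending only on $r$ and can be absorbed into $\lesssim_r$. This yields
\[
\E\norminx{T}{2} \;\lesssim_r\; (\log d)\,\Delta_{r-1}^{1/2} \;+\; \max_{2\leq q \leq r}\Delta_{r-q}^{1/(2q)},
\]
which is exactly the claimed bound. I do not anticipate any real obstacle here: the argument is just a careful bookkeeping exercise identifying contractions with edge-containment counts. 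The only mild subtlety is verifying that repeated indices contribute zero (so that one really does get a clean bound by $\Delta_{r-q}$), but this is immediate from the fact that $A$ is supported on distinct-index tuples corresponding to $r$-element hyperedges.
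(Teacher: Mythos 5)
Your proposal is correct and follows essentially the same route as the paper: apply Theorem~\ref{thm:indepentryapplication} with $p=2$, observe that the $\ell_1$ injective norm of a tensor equals its largest absolute entry, and identify each entry of $A\mathbf{1}^{\otimes q}$ with a count of hyperedges containing a fixed $(r-q)$-subset, bounded by $\Delta_{r-q}$. Your exact count of $q!$ orderings of $e\setminus S$ is slightly sharper than the paper's cruder $\leq r^q$ bound, but both are absorbed into the $\lesssim_r$ factor, and the case $q=0$ giving $\norminx{A}{1}\leq 1$ is handled identically.
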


\medskip
\noindent \textbf{Application to Censored Tensor PCA.}
As an application of \Cref{cor:hypergraphindepentry}, we give a brief description of censored tensor PCA: let $\lambda > 0$ and $v \in \{-1,1\}^d$ be a signal vector and $T$ as in \Cref{cor:hypergraphindepentry}. We observe the noisy tensor
$$ Y(\lambda) = (A \odot  \lambda v^{\otimes r}) + T.$$
The tensor $Y(\lambda)$ only has nonzero entries, where $A$ is nonzero, so $A$ determines how many measurements are available. We are interested in the question whether one can statistically detect the presence of the signal given the measurement tensor $Y(\lambda)$, so whether we can distinguish between $Y(\lambda)$ and $Y(0)$. One can take the injective $\ell_2$ norm as a potential statistic to differentiate between these two distributions.

\begin{thm}[Censored tensor PCA]\label{thm:lambdastatpca}
    Consider the setting in \Cref{cor:hypergraphindepentry}. There exist constants $C_r',C_r > 0$, such that detection is possible when 
    $$\lambda \geq \lambda_A \coloneqq C_r \norminx{ A  }{2}^{-1}  \Big[ (\log d) \,  \Delta_{r-1}^{\frac12}  +  \max_{2 \leq q \leq r} \Delta_{r-q} ^{\frac{1}{2q}} \Big] .$$
   In particular, with high probability as $d \to \infty$, the following inequalities hold: 
    \begin{equation} \label{eq:tensor_pca_bounds}
    \begin{split}
        \norminx{(A \odot \lambda v^{\otimes r}) + T }{2} > C_r'  \norminx{A}{2} \lambda_A ,
        \\
        \norminx{ T }{2} < C_r'  \norminx{A}{2} \lambda_A .
    \end{split} 
    \end{equation}
\end{thm}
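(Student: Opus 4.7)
The plan is to prove the two high-probability bounds in~\eqref{eq:tensor_pca_bounds} directly; detection then follows by thresholding $\norminx{Y(\lambda)}{2}$ at $C_r'\norminx{A}{2}\lambda_A$, since both Type I and Type II errors go to $0$ as $d\to\infty$ once the two inequalities hold.

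\emph{Null side.} The second line of~\eqref{eq:tensor_pca_bounds} is essentially \Cref{cor:hypergraphindepentry} plus a concentration step. That corollary gives $\EE \norminx{T}{2}\lesssim_r (\log d)\Delta_{r-1}^{1/2}+\max_{2\leq q\leq r}\Delta_{r-q}^{1/(2q)}$, which by construction equals $C_r^{-1}\norminx{A}{2}\lambda_A$. Since $\norminx{\cdot}{2}$ is a $\sigma_*$-Lipschitz function of the underlying Gaussian vector, Borell--TIS yields Gaussian concentration of $\norminx{T}{2}$ around its mean with parameter $\sigma_*^2=\sup_{x_1,\dots,x_r\in \eball}\mathrm{Var}\langle T,x_1\otimes\cdots\otimes x_r\rangle$. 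For $T$ built from the $0/1$ adjacency tensor $A$, a short combinatorial computation bounds $\sigma_*^2$ by a multiple of $\Delta_{r-1}$, which is of the same order as the mean. Choosing $C_r$ sufficiently large and $C_r'$ slightly larger than a constant multiple of $C_r^{-1}$ then gives $\norminx{T}{2}<C_r'\norminx{A}{2}\lambda_A$ with probability tending to $1$.

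\emph{Signal side.} The key observation is the sign-invariance identity
\[ \norminx{A\odot v^{\otimes r}}{2}=\norminx{A}{2}\qquad\text{for any }v\in\{-1,1\}^d. \]
Indeed, for test vectors $x_1,\dots,x_r\in\eball$ the substitution $y_k=v\odot x_k$ preserves the $\ell_2$-norm, and
$\langle A\odot v^{\otimes r},x_1\otimes\cdots\otimes x_r\rangle=\langle A,y_1\otimes\cdots\otimes y_r\rangle$, so taking suprema yields equality. By the triangle inequality, on the high-probability event from the null-side step,
\[ \norminx{Y(\lambda)}{2}\;\geq\;\lambda\,\norminx{A}{2}-\norminx{T}{2}\;\geq\;(\lambda_A-C_r'\lambda_A)\,\norminx{A}{2} \]
whenever $\lambda\geq\lambda_A$. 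Selecting $C_r'<\tfrac12$ gives the first line of~\eqref{eq:tensor_pca_bounds}, after possibly renaming $C_r'$.

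\emph{Main obstacle.} The only genuinely model-specific piece is the Borell--TIS step: one must check that the weak variance $\sigma_*^2$ of $T$ is dominated by a term already present in the Corollary~1.7 bound on $\EE\norminx{T}{2}$, so the concentration window is strictly smaller than the mean. For general Gaussian models this need not hold, but for the independent-entry hypergraph model the bound $\sigma_*^2\lesssim_r\Delta_{r-1}$ is immediate from writing $\mathrm{Var}\langle T,\otimes_k x_k\rangle$ as a sum over hyperedges and applying Cauchy--Schwarz, so no new machinery is required beyond the results already stated.
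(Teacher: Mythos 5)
Your proposal is correct and follows essentially the same route as the paper: bound $\E\norminx{T}{2}$ via \Cref{cor:hypergraphindepentry}, upgrade to a high-probability bound via Gaussian concentration of suprema (Borell--TIS / Talagrand Theorem 2.7.13), then use the sign invariance of $\norminx{A\odot v^{\otimes r}}{2}$ together with the triangle inequality. The only (harmless) deviation is in the concentration step: you bound the weak variance by $\sigma_*^2\lesssim_r\Delta_{r-1}$, whereas the paper observes the tighter bound $\sigma_*^2\lesssim_r 1$ (since each entry of $T$ has variance at most $1$, so $\mathrm{Var}\langle T,x_1\otimes\cdots\otimes x_r\rangle\lesssim_r\normf{x_1\otimes\cdots\otimes x_r}^2\leq1$); your looser estimate still yields a fluctuation of order $\sqrt{\Delta_{r-1}\log d}$, which is dominated by the $(\log d)\,\Delta_{r-1}^{1/2}$ term in the mean, so the argument goes through.
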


The question of signal detection with partial information has been studied before in the matrix case (e.g., \cite{ABBS14,C15,BCSvH24}). 
For higher order tensors, to the best of our knowledge, tensor PCA has only been studied in the full-information case, 
where more sophisticated results have been established that also capture the correct dependency of the critical threshold with respect to $r$. While we did not optimize for the dependency on $r$, \Cref{thm:lambdastatpca} does recover the information-theoretically optimal threshold $\lambda_{stat}$ for fixed $r$ (which is $\asymp_r d^{\frac{1-r}{2}}$~\cite{RM14,LML+17}) up to a logarithmic factor in $d$, in the full-information case. Indeed, suppose $A = \mathbf{1}^{\otimes r}$, then $\norminx{A}{2} = d^{\frac{r}{2}}$ and $\Delta_{r-q} \asymp_r d^{q}$, so plugging these values into \Cref{thm:lambdastatpca} yields $\lambda_A \asymp_r (\log d)d^{\frac{1-r}{2}}$. We mention in passing that the computational thresholds for tensor PCA \cite{HSS15,WEAM19}, and the statistical thresholds when $v$ is drawn from priors \cite{PWB20} have also been studied.  

\begin{par}
    Before we proceed with the proofs, we emphasize that the bounds in this section all follow from \Cref{thm:masterthm}, which does not exploit any independent entry assumption. We suspect that by leveraging this assumption, it is possible to use specialized techniques to refine our bounds. We discuss more in \Cref{subsection:refineindepentry}. 
\end{par}

\begin{proof}[Proof of \Cref{thm:indepentryapplication}]
    To apply \Cref{thm:masterthm}, it suffices to bound the tensor parameters as $\sigma_{q, \I_p}^2 = \norminp{H} \lesssim_r \norminx{A\mathbf{1}^{\otimes q}}{p/2}$, where we denote $H := \E[T \star_q T]$. 
    To prove this, note that for any vectors $x_1, \ldots,x_{r-q},y_1, \ldots, y_{r-q} \in \pdball$, we have
    \begin{align*}
    J & := H[x_1, \ldots,x_{r-q},y_1, \ldots, y_{r-q}]  \\
    & = \sum_{ \substack{i_1, \ldots, i_{r-q} = 1 \\ j_1, \ldots, j_{r-q} = 1 } }^d H_{i_1, \ldots, i_{r-q},j_1, \ldots, j_{r-q} }(x_1)_{i_1} \cdots (x_{r-q})_{i_{r-q}} (y_1)_{j_1} \cdots (y_{r-q})_{j_{r-q}} .
    \end{align*}
    Since $T$ is symmetric with independent entries, $H_{i_1, \ldots, i_{r-q},j_1, \ldots, j_{r-q} }$ is given as follows: 
\begin{itemize}
    \item If there is a permutation $\tau \in S_{r-q}$, such that $(i_1, \ldots, i_{r-q}) = (j_{\tau(1)}, \ldots, j_{\tau(r-q)})$, then 
    \[
    H_{i_1, \ldots, i_{r-q},j_1, \ldots, j_{r-q}} = \sum_{k_1,\ldots, k_q=1}^d A_{i_1, \ldots, i_{r-q},k_1, \ldots, k_q} = (A \mathbf{1}^{\otimes q})_{i_1, \ldots, i_{r-q}} .
    \]
    \item If there is no such permutation as above, then $H_{i_1, \ldots, i_{r-q},j_1, \ldots, j_{r-q}} = 0$. 
\end{itemize}
Thus for fixed $(i_1, \ldots, i_{r-q})$, the summand in $J$ is only non-zero for indices $(j_1, \ldots, j_{r-q})$, such that $(i_1, \ldots, i_{r-q}) = (j_{\tau(1)}, \ldots, j_{\tau(r-q)})$ holds for some permutation $\tau$. Therefore, 
\begin{align*}
J 
&\leq \sum_{ \substack{i_1, \ldots, i_{r-q} =1 } }^d (A \mathbf{1}^{\otimes q})_{i_1, \ldots, i_{r-q}} (x_1)_{i_1} \cdots (x_{r-q})_{i_{r-q}}   \sum_{\tau \in S_{r-q}}(y_{\tau(1)})_{i_1} \cdots (y_{\tau(r-q)})_{i_{r-q}} \\
& = \sum_{\tau \in S_{r-q}} \sum_{ \substack{i_1, \ldots, i_{r-q} =1 } }^d  (A \mathbf{1}^{\otimes q})_{i_1, \ldots, i_{r-q}} \big(x_1 \odot y_{\tau(1)}\big)_{i_1} \cdots \big(x_{r-q} \odot y_{\tau(r-q)}\big)_{i_{r-q}}  ,
\end{align*}
where $\odot$ denotes entry-wise multiplication. When upper bounding $J$, we assumed without loss of generality, that all vectors have nonnegative entries. Since the vectors $x_1, \ldots, x_{r-q}$, $y_1, \ldots, y_{r-q} \in \pdball$, it follows that $x_i \odot y_{\tau(i)} \in \mathbb{B}_{p/2}^d$ by the Cauchy-Schwarz inequality. Consequently, for any fixed permutation $\tau \in S_{r-q}$, we have the bound
\[
\sum_{ \substack{i_1, \ldots, i_{r-q} =1 } }^d  (A \mathbf{1}^{\otimes q})_{i_1, \ldots, i_{r-q}} \big(x_1 \odot y_{\tau(1)}\big)_{i_1} \cdots \big(x_{r-q} \odot y_{\tau(r-q)}\big)_{i_{r-q}} \leq \norminx{A \mathbf{1}^{\otimes q}}{p/2} .
\]
This shows that $J \leq (r-q)! \cdot \norminx{A \mathbf{1}^{\otimes q}}{p/2}$ for any choice of $x_1, \ldots,x_{r-q},y_1, \ldots, y_{r-q} \in \pdball$. Thus $\norminp{H} \lesssim_r \norminx{A\mathbf{1}^{\otimes q}}{p/2}$, from which the theorem follows. 
\end{proof}

\begin{proof}[Proof of \Cref{cor:hypergraphindepentry}]
    We start by showing that the $\ell_1$ injective norm of a tensor $U$ is equal to the maximum absolute value of its entry. The lower bound $\max_{i_1, \ldots i_r \in [d]} \abs{U_{i_1, \ldots i_r}} \leq \norminx{U}{1}$ is clear. For the other direction, let $ \norm{x_1}_1 , \ldots, \norm{x_r}_1 \leq 1$, 
    \[\Big| U[x_1, \ldots ,x_r] \Big| \leq \max_{i_1, \ldots i_r \in [d]} \abs{U_{i_1, \ldots i_r}} \sum_{i_1, \ldots, i_r =1}^d  \abs{(x_1)_{i_1} \cdots  (x_r)_{i_r} } \leq \max_{i_1, \ldots i_r \in [d]} \abs{U_{i_1, \ldots i_r}}.
    \]
    Next, we show that the biggest entry of $A \mathbf{1}^{\otimes q}$ is $ \asymp_r \Delta_{r-q}$. Let $\{i_1, \ldots, i_{r-q} \}$ be $r-q$ vertices and $h_1, \ldots, h_l \in E$ be all hyperedges containing $\{i_1, \ldots, i_{r-q} \}$. 
    We have 
    \[
    (A \mathbf{1}^{\otimes q})_{i_1, \cdots, i_r} = \sum_{k_1, \ldots, k_q = 1}^d A_{i_1, \ldots, i_{r-q},k_1, \ldots, k_q} . 
    \]
    Note that $A_{i_1, \ldots, i_{r-q},k_1, \ldots, k_q}$ is $1$ iff $\{i_1, \ldots, i_{r-q},k_1, \ldots, k_q\} = h_j$ for some $j \in [\ell]$. For each hyperedge $h_j$, there are at most $r^q$ choices of $k_1, \cdots, k_q$ so that $\{i_1, \ldots, i_{r-q},k_1, \ldots, k_q\} = h_j$. Consequently, we have 
    \[
    (A \mathbf{1}^{\otimes q})_{i_1, \cdots, i_r} \ceq{r} \ell \leq \Delta_{r-q} . 
    \]
    Maximizing over $i_1, \cdots, i_r$ gives $\|A \mathbf{1}^{\otimes q}\|_1 \asymp_r \Delta_{r-q}$, which implies the corollary. 
\end{proof}

\begin{proof}[Proof of \Cref{thm:lambdastatpca}]
    We apply \Cref{cor:hypergraphindepentry} to get 
    $$\E \norminx{T}{2} \cleq{r} (\log d) \,  \Delta_{r-1}^{\frac12}  +  \max_{2 \leq q \leq r} \Delta_{r-q} ^{\frac{1}{2q}}.$$
    We can use tail bounds for Gaussian processes (Theorem 2.7.13 in~\cite{Tal21}) to show that this bound also holds with high probability. Since $ \norminx{T}{2} = \sup_{x_1, \ldots, x_r \in \edball} \langle T, x_1 \otimes \cdots \otimes x_r \rangle$ is the supremum of a Gaussian process, its diameter can be bounded as 
    $$ \E \langle T , x_1 \otimes \cdots \otimes x_r - x_1' \otimes \cdots \otimes x_r' \rangle^2  \cleq{r} \normf{x_1 \otimes \cdots \otimes x_r - x_1' \otimes \cdots \otimes x_r'}^2 \cleq{r} 1 .$$
    Thus applying Theorem 2.7.13 in~\cite{Tal21}, we have that with probability $1-1/\poly(d)$,
    $$\norminx{T}{2} \cleq{r} (\log d) \,  \Delta_{r-1}^{\frac12}  +  \max_{2 \leq q \leq r} \Delta_{r-q} ^{\frac{1}{2q}} + \sqrt{\log(d)} < K_r \left[ (\log d) \,  \Delta_{r-1}^{\frac12}  +  \max_{2 \leq q \leq r} \Delta_{r-q} ^{\frac{1}{2q}} \right].$$
    This proves the second bound in \eqref{eq:tensor_pca_bounds}. 
    To show that the first bound in \eqref{eq:tensor_pca_bounds} also holds with high probability, we simply use the triangle inequality.
    $$\norminx{(A \odot \lambda v^{\otimes r}) + T }{2} > \norminx{A \odot \lambda v^{\otimes r}}{2} -  K_r \left[ (\log d) \,  \Delta_{r-1}^{\frac12}  +  \max_{2 \leq q \leq r} \Delta_{r-q} ^{\frac{1}{2q}} \right] .$$
    Without loss of generality, we may assume $v = \mathbf{1}$, since $\norminx{A \odot \lambda v^{\otimes r}}{2}$ remains invariant under the choice of $v \in \{-1,1\}^d$, so we have  $\norminx{A \odot \lambda 1^{\otimes r}}{2} = \lambda \norminx{A }{2}$. When $\lambda \geq \lambda_A$ for large enough constant $C_r$, it then follows that 
    \[
    \norminx{A \odot \lambda v^{\otimes r}}{2} >  2 K_r \left[ (\log d) \,  \Delta_{r-1}^{\frac12}  +  \max_{2 \leq q \leq r} \Delta_{r-q} ^{\frac{1}{2q}} \right] ,
    \]
    from which the second statement in \eqref{eq:tensor_pca_bounds} follows. This proves the theorem.  
\end{proof}

\subsection{Matching Matrices}
\label{subsec:matching_mat}

\begin{par}

In this subsection, we give an example of structured random matrices where the bound implied by \Cref{thm:masterthm} is superior to those from other approaches in certain regimes. 

   We consider the structured random matrix model where every matrix in \eqref{eq:Tasgaussianseries} is a matching matrix. A matching $M$ on $[d]$ is a collection of disjoint unordered pairs (also called edges) in $[d]$. 
   The adjacency matrix $A$ of matching $M$ is defined as $A_{i,j} = 1$ if $\{i, j \} \in M$ and $A_{i,j} = 0$ otherwise. \Cref{thm:masterthm} implies the following upper bound for such matrices when $p \geq 4$ (we defer its proof to the end of the subsection).

\begin{thm}\label{thm:matchingmatsourbound}
    Let $M_1, \ldots, M_n$ be matchings on $[d]$, the respective adjacency matrices be $A_1, \ldots, A_n \in \R^{d \times d}$, and $E \coloneqq M_1 \sqcup \cdots \sqcup M_n$. Define $\mu \in \R^n$ by $\mu_i \coloneqq \abs{M_i}$ and $\Delta \in \R^d$ such that $\Delta_i$ is the degree of vertex $i$ in the multigraph $([d],E)$ (including multiplicities). For $p \geq 4$ and $g_1, \ldots, g_n$ being i.i.d. standard Gaussians, we have
    \begin{equation}\label{eq:matchingmatsourineq}
        d^{\frac1p -\frac12} \E \Big\|\sum_{k=1}^n g_k A_k \Big\|_{\I_p} \cleq{p} (\log d)\norm{\mu}_1^{\frac14} \norm{\Delta }_\infty^{\frac 1p} \norm{\mu}_{\frac{2p-4}{p-4}}^{\frac{1}{2} - \frac{1}{p}}.
    \end{equation}
\end{thm}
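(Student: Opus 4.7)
The plan is to apply the Main Theorem (\Cref{thm:masterthm}) with $r=2$ to the symmetric random matrix $T = \sum_{k=1}^n g_kA_k$. Since each $A_k$ is symmetric and $\E[g_kg_\ell]=\delta_{k\ell}$, \Cref{defn:intovariances} simplifies to $\pvar{q}^2 = \norminp{\sum_k A_k\star_q A_k}$ for $q\in\{0,1,2\}$, and \Cref{thm:masterthm} then gives
\[
d^{1/p-1/2}\,\E\norminp{T} \;\cleq{p}\; (\log d)\,\pvar{1} + \pvar{2}^{1/2}\pvar{0}^{1/2}.
\]
Thus the task reduces to estimating each of these three variance parameters in terms of the matching sizes $\mu$ and the degree sequence $\Delta$.

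The $q=1$ and $q=2$ cases are short. Since $M_k$ is a matching, $A_k^2$ is the diagonal indicator of the matched-vertex set $P_k$ (of size $2\mu_k$), so $\sum_k A_k^2 = \diag(\Delta)$. A duality/Hölder argument gives $\norminp{\diag(\Delta)} = \norm{\Delta}_{p/(p-2)}$; interpolation between $\ell^1$ and $\ell^\infty$ yields $\norm{\Delta}_{p/(p-2)} \leq \norm{\Delta}_\infty^{2/p}\norm{\Delta}_1^{1-2/p}$, and using $\norm{\Delta}_1 = 2\norm{\mu}_1$ this produces
\[
\pvar{1} \;\cleq{p}\; \norm{\Delta}_\infty^{1/p}\norm{\mu}_1^{1/2-1/p}.
\]
For $q=2$, $\sum_k\langle A_k,A_k\rangle = 2\norm{\mu}_1$, so $\pvar{2}\asymp\norm{\mu}_1^{1/2}$.

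The main technical step is bounding $\pvar{0}$. Cauchy--Schwarz in $k$ reduces
\[
\pvar{0}^2 = \sup_{x,y,z,w\in\pdball}\sum_k (x^\mathsf{T}A_ky)(z^\mathsf{T}A_kw) \;\leq\; \sup_{x,y\in\pdball}\sum_k (x^\mathsf{T}A_ky)^2 =: V_p.
\]
For each matching $M_k$, letting $\sigma_k\colon P_k\to P_k$ be the pairing involution, $x^\mathsf{T}A_ky = \sum_{i\in P_k} x_iy_{\sigma_k(i)}$, so three-term Hölder with exponents $(p,p,\tfrac{p}{p-2})$ gives $\abs{x^\mathsf{T}A_ky}\leq (2\mu_k)^{1-2/p}\norm{x|_{P_k}}_p\norm{y|_{P_k}}_p$. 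Squaring, summing in $k$, and applying Hölder in $k$ with conjugate exponents $(\tfrac{p}{p-4},\,\tfrac{p}{4})$ separates the $\mu_k$ factor: the first piece contributes $\norm{\mu}_{s_0}^{2-4/p}$ with $s_0 := \tfrac{2p-4}{p-4}$, and the second piece is controlled by
\[
\Big(\sum_k \norm{x|_{P_k}}_p^{p/2}\norm{y|_{P_k}}_p^{p/2}\Big)^{4/p} \;\leq\; \norm{\Delta}_\infty^{4/p},
\]
using Cauchy--Schwarz in $k$ together with the key identity $\sum_k \norm{x|_{P_k}}_p^p = \sum_i \abs{x_i}^p\Delta_i \leq \norm{\Delta}_\infty$ (valid since $\norm{x}_p\leq 1$). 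Taking square roots, $\pvar{0}\cleq{p}\norm{\Delta}_\infty^{2/p}\norm{\mu}_{s_0}^{1-2/p}$.

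Combining, $\pvar{2}^{1/2}\pvar{0}^{1/2} \cleq{p} \norm{\mu}_1^{1/4}\norm{\Delta}_\infty^{1/p}\norm{\mu}_{s_0}^{1/2-1/p}$, which is exactly the right-hand side of~\eqref{eq:matchingmatsourineq}. It remains to check that the $(\log d)\pvar{1}$ contribution is absorbed: after cancelling the common $\norm{\Delta}_\infty^{1/p}$ this reduces to $\norm{\mu}_1^{1/4-1/p}\leq\norm{\mu}_{s_0}^{1/2-1/p}$, equivalently $\norm{\mu}_1\leq\norm{\mu}_{s_0}^{s_0}$, which is immediate since the $\mu_k$ are non-negative integers and $s_0\geq 2$ for $p\geq 4$ (with the limiting case $s_0=\infty$ at $p=4$ amounting to $\norm{\mu}_\infty\geq 1$). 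The main obstacle is the three-term Hölder step inside $V_p$: the choice of exponents $(\tfrac{p}{p-4},\,\tfrac{p}{4})$ is precisely what forces the Schatten-like index $s_0=(2p-4)/(p-4)$ to appear, and it is where the disjoint-support structure of a single matching's edges is crucially used.
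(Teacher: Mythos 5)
Your proof is correct and follows essentially the same route as the paper: apply \Cref{thm:masterthm} with $r=2$, compute $\pvar{2}^2 = 2\norm{\mu}_1$ and $\pvar{1}^2 = \norm{\Delta}_{p/(p-2)}$ exactly as the paper does, bound $\pvar{0}$ via Cauchy--Schwarz in $k$ followed by a $\bigl(\tfrac{p}{p-4},\tfrac{p}{4}\bigr)$-Hölder split over $k$, and absorb the $(\log d)\pvar{1}$ term using the integrality of $\mu$. The only cosmetic difference is in the per-matching estimate inside $\pvar{0}$ (pairing-involution plus three-term Hölder with a Cauchy--Schwarz finish versus the paper's $0/1$-entry two-term Hölder split followed by AM--GM), but both yield the identical bound $\pvar{0}\cleq{p}\norm{\Delta}_\infty^{2/p}\norm{\mu}_{(2p-4)/(p-4)}^{1-2/p}$.
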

Since we only consider matrices in this example, we could also use the noncommutative Khintchine inequality \eqref{eq:NCK} together with Hölder's inequality as in \Cref{rem:nckimprovedtype2}. 
This second natural approach yields the upper bound
\begin{equation}\label{eq:ncklpmatchingestim}
    d^{\frac1p -\frac12} \E \Big\|\sum_{k=1}^n g_k A_k \Big\|_{\I_p} \lesssim  \sqrt{\log d} \cdot d^{\frac12 -\frac1p}\norm{\Delta}_\infty^{\frac 12}.
\end{equation}
Before we prove these inequalities, we discuss when the bound in \Cref{thm:matchingmatsourbound} is better than \eqref{eq:ncklpmatchingestim}. Ignoring logarithmic factors, this is the case when
\begin{equation}\label{eq:secondtermmatchingcondition}
    \norm{\mu}_1^{-\frac{p-4}{2p-4}} \norm{\mu}_{\frac{2p-4}{p-4}} \ll d \norm{\Delta}_\infty \norm{\mu}_1^{-1}.
\end{equation}
The quantity on the right-hand side is the ratio of the maximum degree and the average degree in the multigraph $([d],E)$. 
Since the left-hand side of \eqref{eq:secondtermmatchingcondition} is upper bounded by $\norm{\mu}_\infty^{\frac{p}{2p-4}}$ using Hölder's inequality, the gap in \eqref{eq:ncklpmatchingestim} is sufficiently large whenever \[
\norm{\mu}_\infty^{\frac{p}{2p-4}} \ll d \norm{\Delta}_\infty \norm{\mu}_1^{-1} .
\]
Thus, if $([d],E)$ is irregular and the matchings $M_i$ are sufficiently sparse, then~\eqref{eq:matchingmatsourineq} improves upon~\eqref{eq:ncklpmatchingestim}. Intuitively it makes sense that there needs to be enough asymmetry in the multigraph $([d],E)$ to significantly improve upon the estimate~\eqref{eq:ncklpmatchingestim}: if the injective $\ell_2$ is maximized by vectors, whose coordinates all have similar magnitude, the application of Hölder's inequality underlying \eqref{eq:ncklpmatchingestim} is essentially lossless. 

An ideal estimate for the injective norm should be as dimension-free as possible.  
While we have successfully reduced the dimensional factor in~\eqref{eq:matchingmatsourineq}, it remains unclear to us what the appropriate variance parameters should be to further reduce it.
\end{par}

\begin{rem}
    By the discussion above, \Cref{thm:matchingmatsourbound} may not be optimal. However, not all of the loss is due to \Cref{thm:masterthm}. Some of the parameter estimations are purposefully loose to express the upper bound in terms of more explicit parameters of the matchings.
\end{rem}

\begin{proof}[Proof of \Cref{thm:matchingmatsourbound}]
    We estimate the variance parameters for the model. In the case of $\pvar{2}$, we have an exact expression.
    \begin{equation}\label{eq:matchvar2}
        \pvar{2}^2= \sum_{k=1}^n \normf{A_i}^2 = 2 \sum_{k=1}^n \abs{M_k} = 2 \norm{\mu}_1
    \end{equation}
    The parameter $\pvar{1}$ can be computed using the following observation: since the $A_k$ are adjacency matrices of matchings, their squares are diagonal matrices with $(A_k^2)_{j,j} = 1$ if $M_k$ has an edge containing $j$ and $(A_k^2)_{j,j} = 0$ otherwise. Thus we have 
    $$ \sum_{k=1}^n (A_k^2)_{j,j} = \Delta_j . $$
    Note that $\norm{x \odot y}_{\frac p2} \leq 1$ for any $x,y \in \pdball$ by the Cauchy-Schwarz inequality. We then get 
    \begin{equation}\label{eq:matchvar1}
    \pvar{1}^2 = \sup_{x,y \in \pdball} \sum_{j=1}^d \Delta_j x_j y_j \leq \sup_{z \in \mathbb{B}_{p/2}^d} \sum_{j=1}^d \Delta_j z_j = \norm{\Delta}_{\frac{p}{p-2}}.
    \end{equation}

    It remains to estimate the parameter $\pvar{0}$. Contrary to the other variance parameters, we have no precise expression for $\pvar{0}$ in terms of simple properties of the matchings. The following estimate can be useful nevertheless.
    $$ \pvar{0}^2 = \Big\|\sum_{k=1}^n A_k \star_0 A_k\Big\|_{\I_p} = \sup_{x,y,x',y' \in \pdball}  \sum_{k=1}^n \langle A_k, x \otimes y \rangle \langle A_k, x' \otimes y' \rangle $$
    Using the Cauchy-Schwarz inequality on the sum over $k$, we can get rid of the maximum over $x',y' \in \pdball$.
    $$ \pvar{0}^2 \leq \sup_{x,y \in \pdball}  \sum_{k=1}^n \langle A_k, x \otimes y \rangle^2 = \sup_{x,y \in \pdball} \sum_{k=1}^n \bigg( \sum_{i,j=1}^d (A_k)_{i,j} x_i y_j\bigg)^2 $$
    We will apply Hölder's inequality on the sum over $i,j$ with Hölder conjugates $\textstyle \frac{2}{p} + \frac{p-2}{p} = 1$. We separate the terms as $(A_k)_{i,j} x_i y_j = (A_k)_{i,j} \cdot (A_k)_{i,j} x_i y_j$, which holds since the matrix $A_k$ only has entries in $\{0, 1 \}$.
    $$ \pvar{0}^2 \leq \sup_{x,y \in \pdball} \sum_{k=1}^n \bigg( \sum_{i,j=1}^d \abs{(A_k)_{i,j}}^{\frac{p}{p-2}} \bigg)^{ \frac{p-2}{p} \cdot 2}\bigg( \sum_{i,j=1}^d \abs{(A_k)_{i,j} x_i y_j}^{\frac p2}\bigg)^{ \frac 2p \cdot 2 } $$
    We simplify this expression by noting that the sum of all entries of $A_k$ is twice the number of edges in $M_k$.
    $$ \pvar{0}^2 \leq \sup_{x,y \in \pdball} \sum_{k=1}^n \left( 2 \mu_k \right)^{ \frac{2p-4}{p} }\bigg( \sum_{i,j=1}^d \abs{(A_k)_{i,j} x_i y_j}^{\frac p2}\bigg)^{ \frac 4p  } $$
    Applying Hölder's inequality on the sum over $k$ with Hölder conjugates $\textstyle \frac{4}{p} + \frac{p-4}{p} = 1$ and separating the $\mu_k$ factors yields the following:
    $$ \pvar{0}^2 \leq \sup_{x,y \in \pdball} \bigg(\sum_{k=1}^n (2\mu_k)^{\frac{2p-4}{p-4}} \bigg)^{\frac{p-4}{p}} \bigg( \sum_{k=1}^n \sum_{i,j=1}^d \abs{(A_k)_{i,j} x_i y_j}^{\frac p2}\bigg)^{\frac 4p} $$
    The first factor is now independent of $x,y$. In the second factor we use the AM-GM inequality on $\abs{x_i y_j}^{\frac p2}$ to bound $\abs{(A_k)_{i,j} x_i y_j}^{\frac p2} \leq \frac{1}{2}(A_k)_{i,j}(\abs{x_i}^p + \abs{y_j}^p )$. (The fact that $A_k$ has entries in $\{0, 1 \}$ was also used here.)

    $$ \pvar{0}^2 \lesssim \norm{\mu}_{\frac{2p-4}{p-4}}^\frac{2p-4}{p} \cdot \sup_{x,y \in \pdball} \bigg( \frac12 \sum_{k=1}^n\sum_{i,j=1}^d (A_k)_{i,j}(\abs{x_i}^{p} + \abs{ y_j}^{p})\bigg)^{\frac 4p} $$
    Note that we have a sum of two identical suprema by separating the terms with $x_i$ coefficients from the ones with $y_j$ and using that $A_k$ is symmetric. Moreover, for a fixed $i$ we have $\sum_{k \in [n], j\in [d]}^d (A_k)_{i,j} = \Delta_i$ and hence
    \begin{equation}\label{eq:matchvar0}
        \pvar{0}^2 \lesssim \norm{\mu}_{\frac{2p-4}{p-4}}^\frac{2p-4}{p} \cdot \bigg( \sup_{x \in \pdball} \sum_{i=1}^d \Delta_i\abs{x_i}^{p} \bigg)^{\frac 4p} = \norm{\mu}_{\frac{2p-4}{p-4}}^\frac{2p-4}{p}  \norm{\Delta}_\infty^{\frac 4p}.
    \end{equation}
     Plugging~\eqref{eq:matchvar0},\eqref{eq:matchvar1},\eqref{eq:matchvar2} into \Cref{thm:masterthm} yields 
    $$ d^{\frac1p -\frac12} \E \Big\|\sum_{k=1}^n g_k A_k \Big\|_{\I_p} \cleq{p} (\log d)\norm{\Delta}_{\frac{p}{p-2}}^{\frac12}+ \norm{\mu}_1^{\frac14} \norm{\Delta }_\infty^{\frac 1p} \norm{\mu}_{\frac{2p-4}{p-4}}^{\frac{2p-4}{4p}}.$$
    To finish the proof, it remains to show that the first term is dominated by the second term. Using Hölder's inequality and the fact that $2\norm{\mu}_1 = \norm{\Delta}_1$ holds, we get
    $$ \norm{\Delta}_{\frac{p}{p-2}}^{\frac12} = (\norm{\Delta}_{\frac{p}{p-2}}^{\frac{p}{p-2}})^{\frac{p-2}{2p}} \leq \norm{\Delta}_\infty^{\frac1p} (\norm{\Delta}_1)^{\frac{p-2}{2p}} \asymp \norm{\Delta}_\infty^{\frac1p} \norm{\mu}_1^{\frac{p-2}{2p}} = \norm{\Delta}_\infty^{\frac1p} \norm{\mu}_1^{\frac14}  \norm{\mu}_1^{\frac{p-4}{4p}}.  $$
    Since the vector $\mu$ only has integer coordinates, taking its coordinates to a power greater than $1$ increases their magnitude and thus
    $$ \norm{\Delta}_{\frac{p}{p-2}}^{\frac12} \lesssim \norm{\Delta}_\infty^{\frac1p} \norm{\mu}_1^{\frac14}  \norm{\mu}_1^{\frac{p-4}{4p}} \leq  \norm{\Delta}_\infty^{\frac1p} \norm{\mu}_1^{\frac14}  \left( \norm{\mu}_{\frac{2p-4}{p-4}}^{\frac{2p-4}{p-4}} \right)^{\frac{p-4}{4p}} = \norm{\mu}_1^{\frac14} \norm{\Delta }_\infty^{\frac 1p} \norm{\mu}_{\frac{2p-4}{p-4}}^{\frac{2p-4}{4p}}. $$
\end{proof}

\begin{proof}[Proof of~\eqref{eq:ncklpmatchingestim}]
    By Hölder's inequality, $\pdball \subseteq d^{\frac12 - \frac1p} \edball$, so replacing the injective $\ell_p$ norm by the injective $\ell_2$ norm and then applying NCK in \eqref{eq:NCK} gives
    $$  \E \Big\|\sum_{k=1}^n g_k A_k\Big\|_{\I_p} \leq \left( d^{\frac12 -\frac1p} \right)^{2}\Big\|\sum_{k=1}^n g_k A_k\Big\|_{\I_2} \lesssim  \sqrt{\log d} \cdot d^{1 -\frac2p} \Big\|\sum_{k=1}^n A_k^2\Big\|_{\I_2}^{\frac12}. $$
    In the proof of \Cref{thm:matchingmatsourbound}, we discussed that
    $ \sum_{k=1}^n A_k^2 $
    is diagonal with $\Delta_i$ being the $i$-th diagonal element. Thus, its spectral norm corresponds to its largest entry, so
    $$  \E \Big\|\sum_{k=1}^n g_k A_k\Big\|_{\I_2} \lesssim  \sqrt{\log d} \cdot d^{1 -\frac2p} \norm{\Delta}_\infty^{\frac 12}. $$
\end{proof}

\section{Preliminaries}\label{section:preliminaries}

\subsection{Variance Parameters and Tensor Norm Inequalities}

\begin{par}
    For a symmetric Gaussian tensor of the form $T =\sum_{k=1}^n \rv{g}_k T_k$, we introduced in \Cref{defn:intovariances} a number of variance parameters (whose dependency on the tensors $T_1, \ldots, T_n$ is omitted when it is clear from the context). The next result presents equivalent formulations of these parameters, whose proof closely follows \cite[Lemma A.4]{Kevinsthesis}.
\end{par}

\begin{lemma}\label{lemma:variances}
    Given $T_1, \ldots, T_n \in (\R^d)^{\otimes r}$ symmetric tensors, $ T = \sum_{k=1}^n \rv{g}_k T_k $ a Gaussian tensor as in~\eqref{eq:Tasgaussianseries}, and $0 \leq q \leq r$ we have
    $$ \pvar{q}^2 = \Big\|\sum_{k=1}^n T_k \star_q T_k\Big\|_{\I_p} = \sup_{u_l \in \pdball, \: 1 \leq l \leq r-q} \sum_{k=1}^n \normf{T_k [u_1, \ldots, u_{r-q}, \cdot, \ldots, \cdot]}^2. $$
\end{lemma}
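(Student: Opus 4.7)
The plan is to establish the two equalities separately. For the first equality $\pvar{q}^2 = \|\sum_k T_k \star_q T_k\|_{\I_p}$, I would expand the bilinear product: since $T = \sum_k \rv{g}_k T_k$, the bilinearity of $\star_q$ gives
$$ T \star_q T = \sum_{k,l=1}^n \rv{g}_k \rv{g}_l \, T_k \star_q T_l. $$
Taking expectations and using $\E[\rv{g}_k \rv{g}_l] = \delta_{kl}$, only the diagonal terms survive, so $\E[T \star_q T] = \sum_k T_k \star_q T_k$. Then the definition $\pvar{q}^2 = \norminp{\E[T \star_q T]}$ yields the claim immediately.

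For the second equality, the key step is to write out what $T_k \star_q T_k$ does when tested against $(u_1, \ldots, u_{2r-2q}) \in (\pdball)^{2r-2q}$. Unfolding Definition~\ref{defn:introproduct}, one gets
\begin{equation*}
(T_k \star_q T_k)[u_1, \ldots, u_{2r-2q}] = \sum_{j_1, \ldots, j_q = 1}^d T_k[u_1, \ldots, u_{r-q}, e_{j_1}, \ldots, e_{j_q}] \cdot T_k[e_{j_1}, \ldots, e_{j_q}, u_{r-q+1}, \ldots, u_{2r-2q}].
\end{equation*}
Using the symmetry of $T_k$, the right factor can be rewritten with the $e_{j_\ell}$ slots pushed to the end, so both factors become entries of the order-$q$ tensors $v_k \coloneqq T_k[u_1, \ldots, u_{r-q}, \cdot, \ldots, \cdot]$ and $w_k \coloneqq T_k[u_{r-q+1}, \ldots, u_{2r-2q}, \cdot, \ldots, \cdot]$. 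Hence $(T_k \star_q T_k)[u_1, \ldots, u_{2r-2q}] = \langle v_k, w_k \rangle$, and so
$$ \Big\|\sum_{k=1}^n T_k \star_q T_k \Big\|_{\I_p} = \sup_{u_1, \ldots, u_{2r-2q} \in \pdball} \sum_{k=1}^n \langle v_k, w_k \rangle. $$

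From here it is a matter of a Cauchy--Schwarz sandwich. For the upper bound, Cauchy--Schwarz in $k$ gives $\sum_k \langle v_k, w_k \rangle \leq (\sum_k \normf{v_k}^2)^{1/2} (\sum_k \normf{w_k}^2)^{1/2}$; since the two suprema (over $u_1, \ldots, u_{r-q}$ and over $u_{r-q+1}, \ldots, u_{2r-2q}$) are identical expressions in different dummy variables, both factors are bounded by $\sup_{u_l \in \pdball} \sum_k \normf{T_k[u_1, \ldots, u_{r-q}, \cdot, \ldots, \cdot]}^2$. For the matching lower bound, choose $u_{r-q+\ell} = u_\ell$ for each $1 \leq \ell \leq r-q$, which forces $w_k = v_k$ and turns the inner product into $\normf{v_k}^2$, realizing the desired supremum.

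I do not expect any serious obstacle: the whole argument is a careful unfolding of the definitions together with the symmetry of $T_k$ and a Cauchy--Schwarz step. The only point requiring attention is correctly tracking index orders in the $\star_q$ product and verifying that $T_k$'s symmetry really does let one identify both contractions as $T_k$ being fed its first $r-q$ slots by $(u_1, \ldots, u_{r-q})$ (respectively $(u_{r-q+1}, \ldots, u_{2r-2q})$) with the remaining slots left open.
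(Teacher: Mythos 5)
Your proposal is correct and follows essentially the same route as the paper: bilinearity of $\star_q$ together with $\E[\rv{g}_k\rv{g}_{k'}]=\delta_{kk'}$ for the first equality, then unfolding the $\star_q$ contraction into an inner product of two partial contractions of $T_k$, applying Cauchy--Schwarz for the upper bound, and setting $u_{r-q+\ell}=u_\ell$ to realize equality. The only cosmetic difference is that you collapse the paper's two Cauchy--Schwarz steps (first on $\langle v_k, w_k\rangle$, then on the sum over $k$) into one application in the concatenated space, which is entirely equivalent.
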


\begin{proof}
    The first inequality follows from the bilinearity of the $\star_q$ product.
    $$ \E[T \star_q T] = \E\bigg[ \Big(\sum_{k=1}^n \rv{g}_k T_k\Big) \star_q  \Big(\sum_{k=1}^n \rv{g}_k T_k\Big) \bigg] = \sum_{k,k'=1}^n \E[\rv{g}_k \rv{g}_{k'}] T_k  \star_q T_{k'} = \sum_{k=1}^n T_k \star_q T_k .$$
    For the second equality, we first show that the middle term is upper bounded by the right term. Let $u_1, \ldots, u_{r-q}, v_{q+1}, \ldots, v_{r} \in \pdball$, we have 
    \begin{equation}\label{equation:contractedmultinorm}
        \begin{split}
            &\Big( \sum_{k=1}^n T_k \star_q T_k \Big)[u_1, \ldots, u_{r-q}, v_{q+1}, \ldots, v_r] 
            \\
            &= \sum_{k=1}^n \sum_{m_1,\ldots, m_q=1}^d T_k [u_1, \ldots, u_{r-q}, e_{m_1}, \ldots, e_{m_q} ] T_k[e_{m_1}, \ldots, e_{m_q}, v_{q+1}, \ldots, v_r] \quad \mbox{(Defin. of $\star_q$)}
            \\
            &= \sum_{k=1}^n \big\langle T_k [u_1, \ldots, u_{r-q}, \cdot , \ldots, \cdot] , T_k[ \cdot ,\ldots, \cdot, v_{q+1}, \ldots, v_r] \big \rangle \quad \mbox{(Defin. of dot product)}
            \\
            &\leq \sum_{k=1}^n \normf{T_k [u_1, \ldots, u_{r-q}, \cdot , \ldots, \cdot]} \cdot \normf{T_k[ \cdot ,\ldots, \cdot, v_{q+1}, \ldots, v_r]} \quad \mbox{(Cauchy-Schwarz on $\langle \cdot, \cdot \rangle$)}
            \\
            &\leq \sqrt{ \sum_{k=1}^n \normf{T_k [u_1, \ldots, u_{r-q}, \cdot , \ldots, \cdot]}^2 \sum_{k'=1}^n \normf{T_{k'}[ \cdot ,\ldots, \cdot, v_{q+1}, \ldots, v_r]}^2} \quad \mbox{(Cauchy-Schwarz)}
        \end{split}
    \end{equation}
    The last step used the Cauchy-Schwarz inequality on the sum over the index $k$. Using symmetry of $T_k$ and taking the supremum over the $u_l$ and $v_l$ being in $\pdball$ yields:
    \begin{equation}\label{eq:variancesup}
    \Big\|\sum_{k=1}^n T_k \star_q T_k \Big\|_{\I_p} \leq \sup_{u_l \in \pdball, \: 1 \leq l \leq r-q } \sum_{k=1}^n \normf{T_k [u_1, \ldots, u_{r-q}, \cdot, \ldots, \cdot]}^2
    \end{equation}
    The $v_l$ and the square-root disappeared in this expression, since we are multiplying two identical suprema. To show that~\eqref{eq:variancesup} is an equality, we pick our $v_l$ such that $u_{l}= v_{l+q}$ holds for $1 \leq l \leq r-q$. Then all inequalities in~\eqref{equation:contractedmultinorm} become equalities, since the vectors for which we used the Cauchy-Schwarz inequality would be identical. Adding this restriction will end up with the same supremum as the one in~\eqref{eq:variancesup} due to the symmetry of the $T_k$, which proves equality of the two expressions.
\end{proof}

\begin{par}
    It is not immediately clear how these parameters relate to the injective norms of the tensors $T_1, \ldots, T_n$, and we will investigate this in the remainder of this subsection. In the case $p=2$ and $r=2$ these parameters have already appeared in the literature, where $\pvar{0}$ is often referred to as the ``weak variance"~\cite{Tro15} while $\pvar{1}$ is the variance parameter appearing in the noncommutative Khintchine inequality in~\eqref{eq:NCK}. 
    
    In general, relating these variance parameters and the sum of squared injective norms requires comparison bounds between the Frobenius norm and injective norm of a tensor. This turns out to be a specific case of a more general problem about boundedness of multilinear forms in $\ell_p$-spaces, which has been well studied (e.g., \cite{Per81,DS16}), with the oldest source dating back to Hardy and Littlewood, who focused on bilinear forms in \cite{HL34}. We use the following inequality from \cite[Theorem B]{Per81}, which has been reproved and generalized in \cite[Proposition 4.1 and 4.4]{DS16}. 
\end{par}

\begin{thm}[Theorem B in \cite{Per81}]\label{thm:boundedmultilinear}
    Let $T \in (\R^d)^{\otimes r}$ and let $p_1, \ldots, p_r \geq 1$ be reals such that $s \coloneqq \frac{1}{p_1} + \ldots + \frac{1}{p_r} \leq \frac{1}{2}$ holds and define $\tilde{p} = \frac{2r}{r+1 -2s}$. We have
    $$  \bigg( \sum_{m_1, \ldots, m_r =1}^d \abs{(T)_{(m_1, \ldots , m_r)}}^{\tilde{p}} \bigg)^{1/\tilde{p}} \cleq{r} \sup_{ \norm{u_i}_{p_i} = 1 } T[u_1, \ldots, u_r].$$
\end{thm}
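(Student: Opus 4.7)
The plan is to prove the inequality by induction on the tensor order $r$, with the base case $r = 2$ being the classical bilinear Hardy--Littlewood inequality. The central tools are Khintchine's inequality, used to extract Rademacher randomness one coordinate at a time so as to reduce an $r$-linear problem to an $(r-1)$-linear one, and complex interpolation, used to handle the endpoints in the base case.

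For the base case, I would use complex interpolation (Stein's theorem) between two endpoints. The first endpoint is $(p_1, p_2) = (\infty, \infty)$, for which $s = 0$ and $\tilde p = 4/3$; this is Littlewood's $4/3$-inequality, whose standard proof applies Khintchine's inequality once in each variable. The second endpoint is $(p_1, p_2) = (\infty, 2)$ (and, by symmetry, $(2, \infty)$), for which $s = 1/2$ and $\tilde p = 2$; here the claim reduces to $\normf{T} \leq \|T\|_{\I_\infty, \I_2}$, which follows from the chain $\|T\|_{\I_\infty, \I_2}^2 \geq \E_\eps \|T^\top \eps\|_2^2 = \normf{T}^2$, valid because independent Rademacher signs are admissible test vectors for the $\ell^\infty$ unit ball. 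Interpolating successively in each of the two variables then recovers all pairs $(p_1, p_2)$ with $s \leq 1/2$.

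For the inductive step, fix $m_1 \in [d]$ and consider the slice $T^{(m_1)} \in (\R^d)^{\otimes (r-1)}$ with entries $T^{(m_1)}_{m_2, \ldots, m_r} = T_{m_1, m_2, \ldots, m_r}$. Applying the inductive hypothesis to each slice with exponents $(p_2, \ldots, p_r)$ and the associated intermediate exponent $\tilde q = 2(r-1)/(r - 2\tilde s)$, where $\tilde s = \sum_{i \geq 2} 1/p_i$, bounds the $\ell^{\tilde q}$-norm of that slice by its own $(r-1)$-linear injective norm. Aggregating across $m_1$ via a further Khintchine extraction in the first coordinate and then applying the bilinear base case with exponents $(p_1, \tilde q)$ completes the induction, with the combined exponent collapsing to $\tilde p = 2r/(r+1-2s)$ after tracking the Hölder arithmetic.

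The main obstacle is precisely this exponent bookkeeping: the recursion $\tilde q \to \tilde p$ must match the claimed formula exactly, and one must apply Minkowski's inequality in the correct direction, which is delicate because $\tilde p \in [2r/(r+1),\, 2]$ lies below $2$ throughout the relevant range — so the standard Minkowski interchange is only available in one direction and must be combined carefully with duality. A secondary issue is that the implicit constants accumulate multiplicatively across the $r-1$ reduction steps, yielding a final constant that depends (polynomially) on $r$; this is acceptable since the statement only asks for an $r$-dependent constant.
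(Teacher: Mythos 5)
This statement is imported from the literature — the paper cites it as Theorem~B of~\cite{Per81} (and notes a modern re-proof in~\cite{DS16}) and does not prove it, so there is no in-paper argument to compare against. Your plan therefore has to stand on its own, and as written the inductive step has a genuine arithmetic gap.

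The two base-case endpoints you propose are correct: the Littlewood $4/3$ estimate at $(\infty,\infty)$ and the easy Rademacher identity giving $\normf{T} \le \sup_{\|u\|_\infty=1}\|T^\top u\|_2$ at $(\infty,2)$, followed by interpolation in each slot, does recover the bilinear Hardy--Littlewood result. The problem is the passage from $r-1$ to $r$. If the inductive hypothesis at order $r-1$ yields $\tilde q = \tfrac{2(r-1)}{r-2\tilde s}$ for each slice, and you then ``apply the bilinear base case with exponents $(p_1,\tilde q)$,'' the bilinear formula produces the exponent $\tfrac{4}{3-2/p_1-2/\tilde q}$. Substituting $1/\tilde q = \tfrac{r-2\tilde s}{2(r-1)}$ gives
\begin{equation*}
\frac{4}{\,3 - \tfrac{2}{p_1} - \tfrac{r-2\tilde s}{r-1}\,},
\end{equation*}
which does \emph{not} simplify to $\tfrac{2r}{r+1-2s}$; already at $s=0$, $r=3$ the two expressions are $\tfrac{8}{3}$ versus $\tfrac{3}{2}$ in the denominator, so the scheme overshoots. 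Separately, the sentence glosses over how $\bigl(\sum_{m_1}\|T^{(m_1)}\|_{\mathcal{I}_{p_2},\dots,\mathcal{I}_{p_r}}^{\alpha}\bigr)^{1/\alpha}$ is to be controlled by the full injective norm of $T$; this is not automatic and requires a Rademacher argument in the $m_1$-slot, not merely ``Khintchine extraction.''

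The missing ingredient is a mixed-norm (Blei-type) interpolation inequality, or equivalently a direct complex-interpolation argument on the $r$-linear form itself (this is the route taken in~\cite{DS16}). In the Bohnenblust--Hille proof one does \emph{not} flatten to a bilinear form; one proves, for each coordinate $j$, an estimate of the form $\bigl(\sum_{m_j}(\sum_{\hat m_j}T^2)^{(r-1)/r}\bigr)^{r/(2(r-1))}\lesssim_r \|T\|$, and then the $r$ estimates are combined by Blei's inequality to get the $\ell^{2r/(r+1)}$ bound; the Hardy--Littlewood extension follows by interpolating these against the $\tilde p=2$ endpoints you already identified. A cleaner version of your plan would therefore be: prove the Bohnenblust--Hille endpoint $(\infty,\dots,\infty)$ with Blei's inequality, prove the $r$ ``one $\ell^2$ slot'' endpoints with the Rademacher trick, and interpolate — dropping the bilinear-reduction step entirely, since it is the source of the exponent mismatch.
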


\begin{cor}\label{cor:normtransition}
    Let $T$ be a $d$-dimensional order $r$ tensor and $2 \leq p < \infty$. We have
    $$ \normf{T} \cleq{r} d^{\max \{r/p-1/2,0 \}}\norminp{T}.$$
    
\end{cor}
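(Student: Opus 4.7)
The plan is to derive this comparison bound as a direct consequence of \Cref{thm:boundedmultilinear} together with the standard $\ell_p$ inclusion inequalities on $\R^d$. The key observation is that choosing all exponents equal to $2r$ in \Cref{thm:boundedmultilinear} makes the LHS become exactly the Frobenius norm, giving the baseline comparison $\normf{T} \lesssim_r \norminx{T}{2r}$; from there one only needs to translate between $\norminx{T}{2r}$ and $\norminp{T}$ depending on whether $p$ is larger or smaller than $2r$.

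First, I would apply \Cref{thm:boundedmultilinear} with $p_1 = \ldots = p_r = 2r$. Then $s = r \cdot \frac{1}{2r} = \frac{1}{2}$, so the hypothesis $s \leq \frac{1}{2}$ is satisfied with equality, and $\tilde{p} = \frac{2r}{r+1-1} = 2$. The conclusion of the theorem then reads
\[
\normf{T} = \Big(\sum_{i_1,\ldots,i_r=1}^d |T_{i_1,\ldots,i_r}|^2\Big)^{1/2} \cleq{r} \sup_{\|u_i\|_{2r} \leq 1} T[u_1,\ldots,u_r] = \norminx{T}{2r}.
\]

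Next, I would split into two cases according to the sign of $r/p - 1/2$. If $p \geq 2r$, then for any vector $x \in \R^d$ we have $\|x\|_p \leq \|x\|_{2r}$, which yields the inclusion $\mathbb{B}_{2r}^d \subseteq \pdball$, and hence $\norminx{T}{2r} \leq \norminp{T}$. This gives the claimed bound with exponent $0$. If instead $2 \leq p \leq 2r$, then the standard Hölder inequality gives $\|x\|_p \leq d^{1/p - 1/(2r)}\|x\|_{2r}$, i.e., $\mathbb{B}_{2r}^d \subseteq d^{1/p - 1/(2r)} \pdball$. Applying this inclusion in each of the $r$ arguments of the multilinear form $T$ yields
\[
\norminx{T}{2r} \leq d^{r(1/p - 1/(2r))} \norminp{T} = d^{r/p - 1/2} \norminp{T},
\]
which is the claimed bound in this regime. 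Combining the two cases gives the uniform statement $\normf{T} \lesssim_r d^{\max\{r/p-1/2,\,0\}} \norminp{T}$.

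There is really no substantive obstacle here: all the work was done in invoking \Cref{thm:boundedmultilinear}. The only thing to verify carefully is that the borderline equality $s = \frac{1}{2}$ is permitted by the hypothesis of \Cref{thm:boundedmultilinear} (it is, since the condition is $s \leq \frac{1}{2}$), and that the implicit constant in $\lesssim_r$ depends only on $r$ and not on $p$, which is clear since the $p$-dependence enters only through the deterministic $\ell_p$-versus-$\ell_{2r}$ comparison.
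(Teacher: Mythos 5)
Your proposal is correct and takes essentially the same route as the paper: apply \Cref{thm:boundedmultilinear} with $p_1 = \cdots = p_r = 2r$ to get $\normf{T} \lesssim_r \norminx{T}{2r}$, then convert between $\norminx{\cdot}{2r}$ and $\norminp{\cdot}$ using the $\ell_p$ inclusion relations, splitting into the cases $p \geq 2r$ and $p \leq 2r$.
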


\begin{proof}
    We first consider the case $p = 2r$. We use \Cref{thm:boundedmultilinear} where we define $p_1 = \cdots = p_r = 2r$, so in this case $\tilde{p} =2$ holds and we get
    $$ \normf{T} = \bigg( \sum_{m_1, \ldots, m_r =1}^d \abs{(T)_{(m_1, \ldots , m_r)}}^{2} \bigg)^{1/2} \cleq{r} \sup_{ \norm{u_i}_{2r} = 1 } T[u_1, \ldots, u_r] = \norminx{T}{2r}.$$
    The result for $p \geq 2r$ immediately follows since $\norminx{T}{2r} \leq \norminp{T}$. If $p \leq 2r$, we use Hölder's inequality on the vectors $u_i$ to prove
    $$ \norminx{T}{2r}  =  \sup_{ \norm{u_i}_{2r} = 1 } T[u_1, \ldots, u_r]  \leq  \sup_{ \norm{u_i}_{p} \leq  d^{\frac{1}{p}- \frac{1}{2r} } } T[u_1, \ldots, u_r] = d^{\frac{r}{p}- \frac{r}{2r} } \norminp{T}.$$
\end{proof}

\begin{rem}
The upper bound in \Cref{cor:normtransition} is tight up to a constant factor in general in the regime of $p \geq 2$. 
When $2 \leq p \leq 2r$, a tensor with i.i.d. standard Gaussian entries  proves its tightness (see \Cref{lemma:optimalitynormtransition}); 
for $p \geq 2r$, a tensor with a single nonzero entry shows tightness of the bound.

    While the bound in \Cref{cor:normtransition} also extends to the case of $p < 2$, it would no longer be optimal up to constants. For example, for $p=1$ the injective norm is simply the maximal absolute entry of a tensor, so $\normf{T} \leq d^{r/2}\norminx{T}{1}$, which is much smaller than $d^{r- \frac{1}{2}}$. 
\end{rem}

\begin{par}
    This result allows us to relate the variance parameters to the sum of squared norms.
\end{par}

\begin{defn}\label{defn:type2var}
    Given tensors $T_1, \ldots, T_n \in (\R^d)^{\otimes r}$ we define the type-$2$ $\mathcal{I}_p$-variance as
    $$ \typetwopvar^2 \coloneqq \sum_{k=1}^n \norminp{T_k}^2 .$$
\end{defn}

\begin{cor}\label{cor:type2var}
    For any $0 \leq q \leq r$ and tensors $T_1, \ldots, T_n \in (\R^d)^{\otimes r}$ we have
    $$ \pvar{q} \cleq{r} d^{\max \{q/p-1/2,0 \}} \typetwopvar.$$
\end{cor}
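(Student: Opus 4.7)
The plan is to combine the variational formula for $\pvar{q}$ given by \Cref{lemma:variances} with the Frobenius-vs-injective-norm comparison of \Cref{cor:normtransition} applied to the appropriate contractions of the summands.

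Concretely, I would start from the identity
\[
\pvar{q}^2 = \sup_{u_1, \ldots, u_{r-q} \in \pdball} \; \sum_{k=1}^n \normf{T_k[u_1, \ldots, u_{r-q}, \cdot, \ldots, \cdot]}^2,
\]
proven in \Cref{lemma:variances}. For any fixed choice of $u_1, \ldots, u_{r-q} \in \pdball$, the object $T_k[u_1, \ldots, u_{r-q}, \cdot, \ldots, \cdot]$ is an order-$q$ tensor in $(\R^d)^{\otimes q}$, so I would apply \Cref{cor:normtransition} to each such tensor (with $r$ there replaced by $q$) to obtain
\[
\normf{T_k[u_1, \ldots, u_{r-q}, \cdot, \ldots, \cdot]} \cleq{r} d^{\max\{q/p - 1/2,\, 0\}} \cdot \norminp{T_k[u_1, \ldots, u_{r-q}, \cdot, \ldots, \cdot]}.
\]

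Next, I would use the trivial observation that partially contracting $T_k$ against vectors in $\pdball$ and then taking an injective norm yields at most the full injective norm, i.e.\ $\norminp{T_k[u_1, \ldots, u_{r-q}, \cdot, \ldots, \cdot]} \leq \norminp{T_k}$. Squaring the displayed inequality, summing over $k$, and finally taking the supremum over $u_1, \ldots, u_{r-q} \in \pdball$ then gives
\[
\pvar{q}^2 \cleq{r} d^{2\max\{q/p - 1/2,\, 0\}} \sum_{k=1}^n \norminp{T_k}^2 = d^{2\max\{q/p - 1/2,\, 0\}} \, \typetwopvar^2,
\]
and taking square roots yields the claim. There is no real obstacle here: everything follows by directly chaining \Cref{lemma:variances} and \Cref{cor:normtransition}, with the monotonicity of the injective norm under partial contractions doing the bookkeeping work. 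The main conceptual content of the statement is already contained in \Cref{cor:normtransition}, and this corollary is essentially its uniform-in-$k$ and sup-over-test-vectors version.
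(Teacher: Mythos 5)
Your proposal is correct and follows exactly the same route as the paper: express $\pvar{q}^2$ via \Cref{lemma:variances}, apply \Cref{cor:normtransition} to the order-$q$ contractions $T_k[u_1,\ldots,u_{r-q},\cdot,\ldots,\cdot]$, and absorb the sup over the $u_l$'s into $\norminp{T_k}$. You merely make explicit the intermediate monotonicity step $\norminp{T_k[u_1,\ldots,u_{r-q},\cdot,\ldots,\cdot]} \leq \norminp{T_k}$ that the paper folds into a single display, so there is nothing to add.
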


\begin{proof}
    We take the supremum in \Cref{lemma:variances} inside the sum and then use \Cref{cor:normtransition} to bound $ \normf{T_k [u_1, \ldots, u_{r-q}, \cdot, \ldots, \cdot]} \lesssim_r d^{\max \{q/p-1/2,0 \}}\norminp{T}$, since $T_k [u_1, \ldots, u_{r-q}, \cdot, \ldots, \cdot]$ is an order $q$ tensor.
    $$ \pvar{q}^2 \leq  \sum_{k=1}^n \sup_{u_l \in \pdball, \: 1 \leq l \leq r-q, } \normf{T_k [u_1, \ldots, u_{r-q}, \cdot, \ldots, \cdot]}^2 \cleq{r} \sum_{k=1}^n d^{\max \{\frac{2q}{p}-1,0 \}} \norminp{T_k}^2 $$
\end{proof}

\begin{par}
    In the context of random matrices this parameter appeared first for a cruder version of the noncommutative Khintchine inequality in \cite{Tom74}, where Schatten norms were considered. For the spectral norm of matrices it appeared in \cite{AW02}, where the authors proved a cruder version of a matrix Chernoff inequality using the sum of squared operator norms. While there are cases where the upper bound in \Cref{cor:type2var} is tight, the loss one gets by using the type-$2$ $\mathcal{I}_p$-variance can in general be significant. For the $p=r=2$ (Euclidean matrix) case, this loss has been discussed in more detail in~\cite{Tro15}. 
\end{par}

\subsection{The Gaussian Process Setting}

\begin{par}
    Finding the expected injective norm of a symmetric Gaussian tensor $ T = \sum_{k=1}^n \rv{g}_k T_k $ boils down to bounding the expected supremum of the following Gaussian processes.
    $$ \E \bigg[  \Big\|\sum_{k=1}^n \rv{g}_k T_k\Big\|_{\I}\bigg] = \E \bigg[  \sup_{\norm{x_1}, \ldots, \norm{x_r} \leq 1} \sum_{k=1}^n \rv{g}_k \langle \textstyle  T_k, x_1 \otimes \ldots \otimes x_r \rangle \bigg] . $$
    Since our focus is not on optimizing constant factors that depend on $r$, we may consider the following expected supremum instead (using \Cref{prop:symnormequiv} later):
    \begin{align} \label{eq:gauss_process_prelim}\E \bigg[ \sup_{\norm{u} \leq 1} \Big| \sum_{k=1}^n \rv{g}_k \langle \textstyle  T_k, u \otimes \ldots \otimes u \rangle \Big| \bigg] . 
    \end{align}
    The absolute value inside the supremum can be removed up to a factor of $2$ by a symmetry argument (replacing the $\rv{g}_k$ by $-\rv{g}_k$ will not change the distribution and the supremum is always nonnegative as $0$ is in $\pdball$; see also in the proof of \Cref{prop:trivialsigmastarbound}).

    \begin{defn}
        Let $T \subset \R^n$ be a set. A \emph{Gaussian process} $\{g_u\}_{u \in T}$ is a collection of centered jointly Gaussian random variables. 
        The \emph{natural distance} on $T$ induced by the Gaussian process is defined by
        $$ \md(u,v) \coloneqq \sqrt{\E (g_u-g_v)^2} .$$
    \end{defn}

    The celebrated majorizing measures theorem of Talagrand~\cite{Tal21} says that the expected supremum of a Gaussian process (under mild technical assumptions) is determined by the natural distance structure on the process. The so-called $\gamma_2$-functional characterizes  up to a universal constant the expected supremum. 
    $$ \E \sup_{u \in T} g_u \asymp \gamma_2(T,\md).$$ 
    Unfortunately, bounding the $\gamma_2$-functional is a difficult task in general with many open questions \cite{Tal21}. If one does not aim for the correct polylogarithmic factors (as in this paper), the expected supremum of the process can be bounded by the covering numbers of process with respect to the natural distance via Dudley's entropy integral. 

    \begin{defn}[Covering numbers]
        Let $(T,\md)$ be a semi-metric space (i.e., a metric space without the assumption that $\md(x,y) = 0$ implies $x=y$). For any $\eps > 0$, we define $\mathcal{N}(T, \md, \eps)$ to be the minimum cardinality of a subset $N \subseteq T$, such that for any $t \in T$ there exists a $t' \in N$ with $\md(t,t') \leq \eps$. We refer to such a set $N$ as an $\eps$-\emph{covering}, and $\mathcal{N}(T, \md, \eps)$ the \emph{covering numbers} of the space $(T, \md)$.
    \end{defn}
    
    \begin{thm}[Dudley's entropy integral]\label{thm:entropyintegral}
        Let $\{g_u\}_{u \in T}$ be a Gaussian process and $\md$ be the natural distance on $T$. If $T$ contains a countable subset $T'$ for which $\sup_{u \in T} g_u = \sup_{u \in T'} g_u $ holds almost surely, then
        $$ \E \sup_{u \in T} g_u \lesssim \int_0^\infty \sqrt{\log \mathcal{N} (T,\md, \eps)} \,d\eps. $$
    \end{thm}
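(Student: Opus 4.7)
The plan is a standard chaining argument. The measurability hypothesis---that there exists a countable $T' \subseteq T$ with $\sup_{u \in T} g_u = \sup_{u \in T'} g_u$ almost surely---lets me restrict attention to $T'$, so that all suprema are measurable and safely commute with expectations via monotone convergence. I would first fix a reference point $u_0 \in T'$ and use $\E g_{u_0} = 0$ to rewrite $\E \sup_{u \in T'} g_u = \E \sup_{u \in T'} (g_u - g_{u_0})$.

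Next, for each integer $k \geq 0$, I would pick a minimal $\varepsilon_k$-covering $N_k \subseteq T'$ with $\varepsilon_k := 2^{-k}\diam(T', \md)$, so $|N_k| \leq \mathcal{N}(T, \md, \varepsilon_k)$ and $N_0 = \{u_0\}$, together with a nearest-point map $\pi_k \colon T' \to N_k$ satisfying $\md(u, \pi_k(u)) \leq \varepsilon_k$. For each $u \in T'$ this gives the telescoping identity
\[
g_u - g_{u_0} = \sum_{k \geq 1} \bigl(g_{\pi_k(u)} - g_{\pi_{k-1}(u)}\bigr),
\]
which I would first verify on any finite subset of $T'$ (finite telescoping sum once $\varepsilon_k$ drops below the minimum separation) and then pass to the full countable supremum by monotone convergence.

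Then I would invoke the Gaussian maximal inequality $\E \max_{i \leq N} X_i \lesssim \sigma \sqrt{\log N}$ at each level $k$. The pair $(\pi_{k-1}(u), \pi_k(u))$ takes at most $|N_{k-1}||N_k| \leq \mathcal{N}(T, \md, \varepsilon_k)^2$ distinct values, while the triangle inequality for $\md$ gives standard deviation at most $\md(\pi_{k-1}(u), \pi_k(u)) \leq \varepsilon_{k-1} + \varepsilon_k \leq 3\varepsilon_{k-1}$ for each increment. Bringing the supremum inside the sum yields
\[
\E \sup_{u \in T'} (g_u - g_{u_0}) \lesssim \sum_{k=1}^\infty \varepsilon_{k-1} \sqrt{\log \mathcal{N}(T, \md, \varepsilon_k)}.
\]

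Finally, since $\mathcal{N}(T, \md, \cdot)$ is non-increasing, each summand dominates (up to a universal constant) the integral $\int_{\varepsilon_k}^{\varepsilon_{k-1}} \sqrt{\log \mathcal{N}(T, \md, \varepsilon)}\,d\varepsilon$, and summing dyadically telescopes to an integral over $[0, \diam(T', \md)]$. Because $\mathcal{N}(T, \md, \varepsilon) = 1$ for $\varepsilon \geq \diam(T', \md)$, extending the range to $[0, \infty)$ is free, and this completes the proof. The only genuinely delicate points are (i) measurability and convergence of the telescoping series, both handled by the countable-subset hypothesis, and (ii) the dyadic-to-integral comparison; all Gaussian content is concentrated in the elementary maximal inequality applied at each scale, which I expect to be the one nontrivial input.
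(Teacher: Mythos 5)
The paper states this theorem without proof: it is the classical Dudley entropy integral bound, cited as standard background in \Cref{section:preliminaries}, so there is no in-text proof to compare against. Your sketch is the standard generic chaining argument at the dyadic level, and the overall structure is sound: the reduction to the countable $T'$ and then to finite subsets via monotone convergence handles measurability correctly, the telescoping identity and the Gaussian maximal inequality $\E\max_{i\le N}X_i\lesssim\sigma\sqrt{\log N}$ are invoked appropriately, and the counting $|N_{k-1}||N_k|\le\NNN(T,\md,\eps_k)^2$ together with $\md(\pi_{k-1}(u),\pi_k(u))\le\eps_{k-1}+\eps_k$ is exactly right.

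There is, however, one genuine slip in the final dyadic-to-integral comparison, and it points in the wrong direction. You write that, because $\NNN(T,\md,\cdot)$ is non-increasing, ``each summand dominates the integral $\int_{\eps_k}^{\eps_{k-1}}\sqrt{\log\NNN(T,\md,\eps)}\,d\eps$''. That claim is true (on $[\eps_k,\eps_{k-1}]$ one has $\NNN(\eps)\le\NNN(\eps_k)$, so the integral is at most $\eps_k\sqrt{\log\NNN(\eps_k)}\le\tfrac12\,\eps_{k-1}\sqrt{\log\NNN(\eps_k)}$), but it yields $\sum_k\eps_{k-1}\sqrt{\log\NNN(\eps_k)}\gtrsim\int_0^{\eps_0}\sqrt{\log\NNN(\eps)}\,d\eps$, which is the reverse of what you need. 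What you actually want is that each summand is \emph{dominated by} an integral over a dyadic interval, which requires comparing to the interval to the \emph{left}: on $[\eps_{k+1},\eps_k]$ one has $\NNN(\eps)\ge\NNN(\eps_k)$, so
$\int_{\eps_{k+1}}^{\eps_k}\sqrt{\log\NNN(\eps)}\,d\eps\ge\eps_{k+1}\sqrt{\log\NNN(\eps_k)}=\tfrac14\,\eps_{k-1}\sqrt{\log\NNN(\eps_k)}$, and summing this over $k\ge1$ gives $\sum_k\eps_{k-1}\sqrt{\log\NNN(\eps_k)}\le 4\int_0^{\eps_1}\sqrt{\log\NNN(\eps)}\,d\eps\le4\int_0^{\infty}\sqrt{\log\NNN(\eps)}\,d\eps$. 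This is a local error in exposition rather than in the approach, but as written the final step does not close the argument; swap the interval and the direction of the comparison and the proof goes through.
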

    
    Providing covering number estimates for the Gaussian process corresponding to tensor injective norms remains a challenging task, even when one is willing to lose logarithmic factors. In the following \Cref{sec:proof}, we present a volumetric argument to bound the covering numbers, which provides the correct estimate (up to a log factor) for some regimes of $r,p$ and yields some sub-optimal, but nevertheless new, estimates for other values. 
\end{par}

\subsection{Classical Covering Number Bounds}
In this subsection, we discuss classical bounds for the covering numbers above and why the resulting tensor concentration inequality is not satisfactory.

    \begin{defn}
    Given symmetric tensors $T_1, \ldots, T_n \in (\R^d)^{\otimes r}$, we define the \emph{natural distance} $\md$ on $\R^d$ as follows:
    $$ \md(u,v) \coloneqq \sqrt{\sum_{k=1}^n (\langle T_k, u^{\otimes r} \rangle - \langle T_k, v^{\otimes r} \rangle)^2 } .$$
    \end{defn}
    
    The indexing set for the process in \eqref{eq:gauss_process_prelim} in our setting will be $\pdball$. We omit the dependency on the $T_k$ in the distance $\md$, as it is usually clear from context. There is a direct relationship between the distance $\md$ and the $\ell_p$ distance. 

\begin{lemma}\label{lemma:cauchyschwarzdistancebound}
    Let $T_1, \ldots, T_n \in (\R^d)^{\otimes r}$ be symmetric tensors and $u,v \in \pdball$, then
    $$ \md(u,v) \leq r\pvar{0}\norm{u-v}_p .$$
\end{lemma}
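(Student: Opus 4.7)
The plan is to prove the bound via a telescoping decomposition of $u^{\otimes r} - v^{\otimes r}$, followed by Minkowski's inequality in $\ell_2$ (over the index $k$), and then an application of the characterization of $\pvar{0}$ from Lemma~\ref{lemma:variances}.

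First, I would write the telescoping identity
\[
u^{\otimes r} - v^{\otimes r} \;=\; \sum_{j=1}^r v^{\otimes (r-j)} \otimes (u-v) \otimes u^{\otimes (j-1)},
\]
which follows from pairing consecutive terms in the chain $v^{\otimes r}, v^{\otimes r-1}\otimes u, \ldots, u^{\otimes r}$. Contracting against $T_k$ gives
\[
\langle T_k, u^{\otimes r}\rangle - \langle T_k, v^{\otimes r}\rangle \;=\; \sum_{j=1}^r T_k[\underbrace{v,\ldots,v}_{r-j}, u-v, \underbrace{u,\ldots,u}_{j-1}].
\]

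Next, I would apply the triangle inequality (Minkowski's inequality) for the $\ell_2$-norm over the index $k \in \{1,\ldots,n\}$ to obtain
\[
\md(u,v) \;\leq\; \sum_{j=1}^r \sqrt{\sum_{k=1}^n \bigl(T_k[\underbrace{v,\ldots,v}_{r-j}, u-v, \underbrace{u,\ldots,u}_{j-1}]\bigr)^2}.
\]
Each term on the right is then bounded using the characterization $\pvar{0}^2 = \sup_{u_1,\ldots,u_r \in \pdball}\sum_{k=1}^n (T_k[u_1,\ldots,u_r])^2$ (the $q=0$ case of Lemma~\ref{lemma:variances}): assuming without loss of generality that $\norm{u-v}_p > 0$, set $z := (u-v)/\norm{u-v}_p$, so that $\norm{z}_p = 1$ and $u,v \in \pdball$. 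By multilinearity of $T_k$ in each slot,
\[
\sqrt{\sum_{k=1}^n \bigl(T_k[v,\ldots,v, u-v, u,\ldots,u]\bigr)^2} \;=\; \norm{u-v}_p \sqrt{\sum_{k=1}^n \bigl(T_k[v,\ldots,v,z,u,\ldots,u]\bigr)^2} \;\leq\; \norm{u-v}_p\, \pvar{0}.
\]
Summing over $j=1,\ldots,r$ yields $\md(u,v) \leq r\,\pvar{0}\norm{u-v}_p$.

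I do not expect any genuine obstacle here; the only small point to be careful about is that the characterization of $\pvar{0}$ requires plugging in vectors from $\pdball$, which is exactly why the normalization step $(u-v)/\norm{u-v}_p$ is needed so that all $r$ slots of $T_k$ are fed with unit-$\ell_p$-norm vectors. Symmetry of the $T_k$ is used implicitly via Lemma~\ref{lemma:variances}; without symmetry one would still get the bound up to a cosmetic change since each term in the telescope uses a different arrangement of $u$, $v$, and $u-v$.
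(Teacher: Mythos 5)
Your proof is correct and follows essentially the same route as the paper: telescope $u^{\otimes r}-v^{\otimes r}$ into $r$ rank-one differences, decouple the telescope sum from the sum over $k$, normalize $u-v$, and invoke the $q=0$ characterization of $\pvar{0}$ from Lemma~\ref{lemma:variances}. The only cosmetic difference is that you decouple via Minkowski's inequality in $\ell_2^n$ while the paper applies Cauchy--Schwarz to each squared telescope sum; both yield the same factor of $r$.
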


\begin{proof}
    We start by writing telescopic sums inside the squares of the natural distance.
    $$ \md(u,v)^2 = \sum_{i=1}^n (\langle T_i, u^{\otimes r} \rangle - \langle T_i, v^{\otimes r} \rangle)^2 = \sum_{i=1}^n \Big(  \sum_{q=1}^r \big( \langle T_i, u^{\otimes r-q+1} \otimes v^{\otimes q-1} \rangle - \langle T_i, u^{\otimes r-q} \otimes v^{\otimes q}  \rangle \big) \Big)^2 . $$
    Using the Cauchy-Schwarz inequality on all $n$ squares yields:
    $$ \md(u,v)^2 \leq \sum_{i=1}^n r \sum_{q=1}^r (\langle T_i, u^{\otimes r-q+1} \otimes v^{\otimes q-1} \rangle - \langle T_i, u^{\otimes r-q} \otimes v^{\otimes q}  \rangle)^2 . $$
    Exploiting the symmetry of the $T_i$, we can rewrite the bound above as follows:
    $$ \sum_{i=1}^n r \sum_{q=1}^r \langle T_i, u^{\otimes r-q} \otimes v^{\otimes q-1} \otimes (u-v)\rangle^2 =  r\norm{u-v}_p^2 \sum_{q=1}^r \sum_{i=1}^n \bigg \langle T_i, u^{\otimes r-q} \otimes v^{\otimes q-1} \hspace{-1mm}\otimes \hspace{-0.5mm}\frac{u-v}{\norm{u-v}_p} \bigg \rangle^2 .$$
    Since $u,v \in \pdball$, the sum over the index $i$ is bounded by $\pvar{0}^2$ (due to \Cref{lemma:variances}). Thus, 
    $$ \md(u,v)^2 \leq r\norm{u-v}_p^2 \sum_{q=1}^r \pvar{0}^2 = r^2 \norm{u-v}_p^2 \pvar{0}^2 .$$
\end{proof}

\begin{par}
    This lemma provides us with the following first covering number estimate. 
\end{par}

\begin{cor}\label{cor:pdlogcovernumberspnorm}
    Given symmetric tensors $T_1, \ldots, T_n \in (\R^d)^{\otimes r}$. For any $0< s <  r\pvar{0} $, 
    $$ \sqrt{\log(\mathcal{N}(\pdball,\md,s))} \leq \sqrt{d\log(3r \pvar{0}/s)} .$$
    For $s \geq r \pvar{0}$, we have 
    $$ \sqrt{\log(\mathcal{N}(\pdball,\md,s))} = 0 . $$
\end{cor}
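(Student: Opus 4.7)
The plan is to reduce covering $\pdball$ in the natural distance $\md$ to covering it in the ambient $\ell_p$ norm, for which the classical volumetric estimate applies. The bridge is \Cref{lemma:cauchyschwarzdistancebound}, which tells us that $\md$ is $r\pvar{0}$-Lipschitz with respect to $\norm{\cdot}_p$ on $\pdball$. Consequently, any $\eta$-covering of $\pdball$ in $\norm{\cdot}_p$ is automatically an $(r\pvar{0}\eta)$-covering of $\pdball$ in $\md$, so
\[
\mathcal{N}(\pdball,\md,s) \;\leq\; \mathcal{N}\big(\pdball,\norm{\cdot}_p,\; s/(r\pvar{0})\big).
\]

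For the first claim, I would invoke the standard volumetric estimate: for any norm on $\R^d$ with unit ball $B$ and any $\eta \in (0,1)$, $\mathcal{N}(B,\norm{\cdot},\eta) \leq (3/\eta)^d$. This is the usual argument where a maximal $\eta$-separated set $N \subset B$ produces disjoint translates of $(\eta/2) B$ that all fit inside $(1+\eta/2)B$; comparing volumes gives $|N|(\eta/2)^d \leq (1+\eta/2)^d$, i.e., $|N| \leq (1+2/\eta)^d \leq (3/\eta)^d$. Substituting $\eta = s/(r\pvar{0}) \in (0,1)$ (which is exactly the hypothesis $0 < s < r\pvar{0}$) yields $\mathcal{N}(\pdball,\md,s) \leq (3r\pvar{0}/s)^d$; taking the logarithm and square root gives the stated inequality.

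For the second claim, I would observe that the single point $\{0\} \subset \pdball$ already serves as an $s$-covering in $\md$ once $s \geq r\pvar{0}$: indeed, \Cref{lemma:cauchyschwarzdistancebound} gives $\md(0,v) \leq r\pvar{0}\norm{v}_p \leq r\pvar{0} \leq s$ for every $v \in \pdball$. Hence $\mathcal{N}(\pdball,\md,s) = 1$ and the logarithm vanishes. There is no genuine obstacle here: the statement is essentially a bookkeeping corollary of the Lipschitz bound in \Cref{lemma:cauchyschwarzdistancebound} combined with the textbook volumetric covering estimate for unit balls.
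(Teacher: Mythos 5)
Your proposal is correct and follows essentially the same route as the paper: apply \Cref{lemma:cauchyschwarzdistancebound} to convert an $\ell_p$-covering into an $\md$-covering, invoke the standard $(3/\eta)^d$ volumetric bound for $\pdball$, and observe that $\{0\}$ alone covers once $s \geq r\pvar{0}$.
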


\begin{proof}
    Let $0 < \eps <1$ and pick a minimal $\eps$-covering of $\pdball$ with respect to the $\ell_p$-distance (setting $\eps=s/(r\pvar{0})$). A classical volumetric argument shows that the covering number is bounded by $(3/\varepsilon)^d$, see~\cite[Corollary 4.1.15]{AsymptoticGeometricAnalysis-Book2015}.
Since an $\eps$-covering of $\pdball$ with respect to $\norm{\cdot}_p$ is an $s$-covering of $\pdball$ with respect to $\md$ by \Cref{lemma:cauchyschwarzdistancebound}, we have
    $$ \sqrt{\log(\mathcal{N}(\pdball,\md,s))} \leq \sqrt{d\log(3r \pvar{0}/s)}.$$
    In the case $ s \geq r \pvar{0}$, the point $0$ already forms an $s$-covering of $\pdball$, which implies $\sqrt{\log(\mathcal{N}(\pdball,\md,s))} = 0$. This proves the corollary. 
\end{proof}

\begin{par}
This covering number bound has been referred to as ``Slepian bound"~\cite{BvH16}, since it can also be shown using a Gaussian comparison inequality.
This bound can be used in the entropy integral, but as we will see below, the resulting tensor concentration inequality is not satisfactory, which also serves as additional motivation for this paper. 
\end{par}

\begin{prop}\label{prop:trivialsigmastarbound}
    Let $T_1, \ldots, T_n \in (\R^d)^{\otimes r}$ be symmetric tensors. For $\rv{g}_1, \ldots, \rv{g}_n$ being i.i.d. standard Gaussians we have
    \[ \E \Big\| \sum_{k=1}^n \rv{g}_k T_k \Big\|_{\I_p} \cleq{r} \sqrt{d} \pvar{0} .\]
\end{prop}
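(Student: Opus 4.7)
The plan is to apply Dudley's entropy integral directly to the Gaussian process underlying the injective norm, using the crude covering number estimate from \Cref{cor:pdlogcovernumberspnorm} and the symmetric reduction discussed in the preliminaries.

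First, I would reduce to the symmetric form of the supremum: the injective norm and the symmetric injective norm are equivalent up to a factor depending only on $r$ (as alluded to in the footnote and \Cref{prop:symnormequiv}), so it suffices to bound
\[
\E \sup_{u \in \pdball} \Big| \sum_{k=1}^n \rv{g}_k \langle T_k, u^{\otimes r} \rangle \Big|.
\]
The absolute value costs only a factor of $2$: since the distribution of $(\rv{g}_1, \ldots, \rv{g}_n)$ is symmetric and $0 \in \pdball$, the supremum without absolute values is already nonnegative, and a standard symmetrization argument removes the absolute value at the cost of a factor $2$. We are therefore left with estimating $\E \sup_{u \in \pdball} g_u$ where $g_u := \sum_{k=1}^n \rv{g}_k \langle T_k, u^{\otimes r} \rangle$ is a centered Gaussian process on $\pdball$ whose natural distance is exactly the $\md$ defined earlier.

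Next, I would apply Dudley's entropy integral (\Cref{thm:entropyintegral}) to this process. The process has a countable dense subset (e.g.\ rational points in $\pdball$) on which the supremum is attained almost surely by continuity, so Dudley applies and yields
\[
\E \sup_{u \in \pdball} g_u \lesssim \int_0^\infty \sqrt{\log \mathcal{N}(\pdball, \md, s)}\, ds.
\]
Now I plug in the bound from \Cref{cor:pdlogcovernumberspnorm}: the integrand vanishes for $s \geq r\pvar{0}$, and for $0 < s < r \pvar{0}$ it is bounded by $\sqrt{d \log(3r \pvar{0}/s)}$. Hence
\[
\E \sup_{u \in \pdball} g_u \lesssim \int_0^{r \pvar{0}} \sqrt{d \log(3 r \pvar{0}/s)}\, ds.
\]

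The remaining step is the integral evaluation. Substituting $t = s/(r \pvar{0})$ gives
\[
r \pvar{0} \sqrt{d} \int_0^1 \sqrt{\log(3/t)}\, dt,
\]
and the inner integral is an absolute constant (standard Gaussian-type integral, finite because $\sqrt{\log(3/t)}$ is integrable near $0$). Combining with the constant factors from the symmetric reduction and the removal of the absolute value gives $\E \| \sum_k \rv{g}_k T_k\|_{\I_p} \lesssim_r \sqrt{d}\, \pvar{0}$, as desired. There is no real obstacle here: the proposition is meant as a warm-up showing what one obtains from the ``Slepian'' covering bound alone, and the work is essentially just plugging \Cref{cor:pdlogcovernumberspnorm} into Dudley. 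The main point, as the paper stresses, is that this bound is unsatisfactory and motivates the finer volumetric estimates developed later.
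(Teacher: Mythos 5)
Your proposal is correct and takes essentially the same route as the paper: reduce to the symmetric supremum via \Cref{prop:symnormequiv}, remove the absolute value at the cost of a factor $2$ by Gaussian symmetry, then plug the covering number bound from \Cref{cor:pdlogcovernumberspnorm} into Dudley's entropy integral (\Cref{thm:entropyintegral}) and evaluate via the substitution $s \mapsto s/(r\pvar{0})$. No discrepancies with the paper's argument.
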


\begin{proof}
    Using \Cref{prop:symnormequiv} and the symmetry of the Gaussians $\rv{g}_k$, it suffices to bound
    \begin{equation*}
    \begin{split}
    \E \Big\| \sum_{k=1}^n \rv{g}_k T_k\Big\|_{\I_p} &\cleq{r} \E \sup_{\norm{u}_p \leq 1} \Abs{ \sum_{k=1}^m \rv{g}_k \langle T_k, u^{\otimes r} \rangle } 
    \\
    &=  \E \max \bigg\{ \sup_{\norm{u}_p \leq 1}  \sum_{k=1}^m \rv{g}_k \langle T_k, u^{\otimes r} \rangle  , \sup_{\norm{u}_p \leq 1}  \sum_{k=1}^m -\rv{g}_k \langle T_k, u^{\otimes r} \rangle  \bigg \}
    \\
    &\leq \E \sup_{\norm{u}_p \leq 1}  \sum_{k=1}^m \rv{g}_k \langle T_k, u^{\otimes r} \rangle  + \E \sup_{\norm{u}_p \leq 1}  \sum_{k=1}^m -\rv{g}_k \langle T_k, u^{\otimes r} \rangle 
    \\
    &= 2 \E \sup_{\norm{u}_p \leq 1}  \sum_{k=1}^m \rv{g}_k \langle T_k, u^{\otimes r} \rangle . 
    \end{split}
    \end{equation*}
    The expected supremum of the process in the last equality can be bounded by plugging the estimate from \Cref{cor:pdlogcovernumberspnorm} into \Cref{thm:entropyintegral}. 
    $$ \int_0^{\infty} \sqrt{\log(\mathcal{N}(\pdball,\md,s))} \, ds \leq \int_0^{r \pvar{0}} \sqrt{d\log(3r \pvar{0}/s)}\, ds . $$
    We substitute $s' = \frac{s}{r \pvar{0}}$, which results in 
    $$ \int_0^{r \pvar{0}} \sqrt{d\log(3r \pvar{0}/s)}\, ds = r \sqrt{d} \pvar{0} \int_0^{1} \sqrt{\log(3/s')}\, ds' \lesssim r \sqrt{d} \pvar{0} .$$
\end{proof}

\begin{par}
    In the i.i.d. entry case this ``classical" bound performs well, as the next result will show. The parameter $\pvar{0}$ in some sense nicely exploits the orthogonality of tensors for this model. We also prove optimality of the exponent in \Cref{cor:normtransition} with this example.
\end{par}

\begin{lemma}\label{lemma:optimalitynormtransition}
    For every $(i_1, \ldots, i_r) \in [d]^r$ with $i_1 \leq \ldots \leq i_r$, we have an independent standard Gaussian $\rv{g}_{(i_1, \ldots, i_r)}$ and define the symmetric tensor $T_{(i_1, \ldots, i_r)}$  with 
    $$ T_{(i_1, \ldots, i_r)}[e_{i_{\tau(1)}}, \ldots, e_{i_{\tau(r)}}] = 1$$
    for every permutation $\tau \in S_r$ and $0$ otherwise. The symmetric random tensor 
    $$ T = \sum_{1 \leq i_1, \ldots, i_r \leq d} \rv{g}_{(i_1, \ldots, i_r)} T_{(i_1, \ldots, i_r)}$$
    has the following properties for $2 \leq p \leq 2r$:
    $$ d^{\frac{r}{2}} \ceq{r} \E \normf{T} \ceq{r} d^{\frac{r}{p}-\frac{1}{2}}\E \norminp{T}  .$$
\end{lemma}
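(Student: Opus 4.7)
The plan is to first pin down $\E\normf{T}\asymp_r d^{r/2}$ via a direct moment computation, and then bracket $\E\norminp{T}$ from above and below to obtain the claimed equivalence $\E\normf{T}\asymp_r d^{r/p-1/2}\E\norminp{T}$. The lower direction of the second equivalence will come from \Cref{cor:normtransition}, and the upper direction from a Hölder embedding of $\pdball$ into a scaled $\edball$ combined with the classical $\sqrt d$-bound on the injective $\ell_2$-norm of a Gaussian tensor.

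For the Frobenius asymptotic, I would unfold the definition: by construction $T_{j_1,\ldots,j_r}=\rv{g}_{(i_1,\ldots,i_r)}$, where $(i_1,\ldots,i_r)$ is the sorted reordering of $(j_1,\ldots,j_r)$. Hence
\[
\normf{T}^2 = \sum_{i_1\leq\ldots\leq i_r} m_{(i_1,\ldots,i_r)}\,\rv{g}_{(i_1,\ldots,i_r)}^2,
\]
where the multiplicity $m_{(i_1,\ldots,i_r)}\in[1,r!]$ counts permutations; summing multiplicities gives $\E\normf{T}^2=d^r$. Since $\normf{T}=\|D\rv{g}\|_2$ with $D$ diagonal of operator norm $\leq\sqrt{r!}$, the map is $\sqrt{r!}$-Lipschitz in the underlying Gaussian vector, so Gaussian concentration yields $\mathrm{Var}(\normf{T})\lesssim_r 1$. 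Combined with Jensen's inequality this forces $\E\normf{T}\asymp_r d^{r/2}$.

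For the injective norm, the lower bound comes straight from \Cref{cor:normtransition}: since $p\leq 2r$, the maximum in the exponent equals $r/p-1/2\geq 0$, so $\normf{T}\lesssim_r d^{r/p-1/2}\norminp{T}$ and taking expectations gives $\E\norminp{T}\gtrsim_r d^{1/2-r/p}\E\normf{T}\asymp_r d^{(r+1)/2-r/p}$. For the matching upper bound, Hölder gives $\pdball\subseteq d^{1/2-1/p}\edball$, whence $\norminp{T}\leq d^{r(1/2-1/p)}\norminx{T}{2}$; invoking the classical bound $\E\norminx{T}{2}\lesssim_r\sqrt{d}$ for Gaussian tensors (cf.~\cite{TS14,NDT10}) yields $\E\norminp{T}\lesssim_r d^{(r+1)/2-r/p}$. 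Multiplying the two asymptotics then delivers $\E\normf{T}\asymp_r d^{r/p-1/2}\E\norminp{T}$.

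I do not foresee a significant obstacle here. The only external input is the classical $O_r(\sqrt{d})$ bound on the injective $\ell_2$-norm of a Gaussian tensor; although $T$ has correlated entries from the symmetry, the pointwise variances remain $O_r(1)$, and the standard $\epsilon$-net argument on $\edball$ combined with Gaussian concentration still applies verbatim if one prefers a self-contained derivation. As the remark preceding the lemma points out, this example also shows that the exponent $r/p-1/2$ in \Cref{cor:normtransition} is sharp throughout the regime $2\leq p\leq 2r$.
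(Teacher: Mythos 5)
Your proof is correct and yields the right asymptotics, but it takes a genuinely different route from the paper's in two of the three pieces.

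For the Frobenius part, the paper only needs a \emph{lower} bound $\E\normf{T}\gtrsim d^{r/2}$, which it gets by replacing each Gaussian with a Rademacher sign (the Rademacher Frobenius norm is deterministically $d^{r/2}$) and invoking Gaussian $\gtrsim$ Rademacher domination. You instead compute $\E\normf{T}^2 = d^r$ directly and control the variance by Gaussian concentration for the $\sqrt{r!}$-Lipschitz map $\mathbf{g}\mapsto\|D\mathbf{g}\|_2$; this gives the two-sided bound at once. One small caveat: the step $(\E\normf{T})^2 = d^r - O_r(1) \asymp_r d^r$ needs $d^r \gg r!$; for $d$ bounded in terms of $r$ the claim is trivial, but you should note this case split (or observe $\E\normf{T}\geq\E|T_{1,\ldots,1}|\gtrsim 1$ there).

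For the injective-norm upper bound, the paper applies its own Proposition~\ref{prop:trivialsigmastarbound} and computes $\pvar{0}^2\lesssim_r d^{r-2r/p}$ via orthogonality of the $T_{(i_1,\ldots,i_r)}$'s, getting $\E\norminp{T}\lesssim_r\sqrt{d}\,\pvar{0}$. You instead rescale $\pdball\subseteq d^{1/2-1/p}\edball$ and invoke the classical $\E\norminx{T}{2}\lesssim_r\sqrt d$ bound for Gaussian tensors, noting that the symmetry-induced correlations do not spoil it because pointwise variances stay $O_r(1)$. Both give $\E\norminp{T}\lesssim_r d^{(r+1)/2-r/p}$. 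The paper's route is more self-contained (it uses only machinery already developed); yours is arguably shorter but requires a brief variance check to justify transferring the i.i.d.\ bound to the correlated tensor, which you correctly flag. Both proofs share the use of \Cref{cor:normtransition} for the matching direction and close the argument by squeezing the chain of inequalities.
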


\begin{proof}
     We use \Cref{prop:trivialsigmastarbound} to upper bound the expected injective $\ell_p$ norm of $T$, which requires an estimate for $\pvar{0}$. Let $u_1,\ldots, u_r \in \pdball$. Since the tensors $T_{(i_1, \ldots, i_r)}$ are orthogonal and have Frobenius norm at most $\sqrt{r!}$, we have
    $$ \sum_{1 \leq i_1, \ldots, i_r \leq d} \langle T_{(i_1, \ldots, i_r)}, u_1 \otimes \cdots \otimes u_r \rangle^2 \leq r! \normf{u_1 \otimes \cdots \otimes u_r}^2 = r! \sum_{i_1, \ldots, i_r =1}^d(u_1)_{i_1}^2 \cdots (u_r)_{i_r}^2 . $$
    Taking the supremum over $u_1, \ldots, u_r$ being in $\pdball$ we get 
    $$ \pvar{0}^2 \leq r! \sup_{u_1, \ldots, u_r \in \pdball} \sum_{i_1, \ldots, i_r =1}^d(u_1)_{i_1}^2 \cdots (u_r)_{i_r}^2 \leq r! \sup_{u_1, \ldots, u_r \in \pdball} \norm{u_1}_2^2 \cdots \norm{u_r}_2^2 \leq r! d^{r-\frac{2r}{p}} .$$
    The last inequality follows from using Hölder's inequality ($\norm{u_q}_2 \leq d^{\frac{1}{2}-\frac{1}{p}}\norm{u_q}_p$). Thus,
    $$ \E \norminp{T} \cleq{r} \sqrt{d}\pvar{0} \cleq{r} d^{\frac{r+1}{2}-\frac{r}{p}}.$$
    We show that the reverse inequality also holds. We lower bound the expected Frobenius norm of $T$. If every Gaussian $\rv{g}_{(i_1, \ldots, i_r)}$ was replaced by an independent rademacher random variable $\boldsymbol{\eps}_{(i_1, \ldots, i_r)}$ (which is $1$ with probability $1/2$ and $-1$ with probability $1/2$), then the Frobenius norm of the tensor would be equal to $d^{r/2}$ almost surely, as every entry has absolute value $1$. The expected norm of the Gaussian tensor dominates the expected norm of the rademacher tensor, which is a classical fact that holds more generally in Banach spaces (see section 4.2 in~\cite{LT91}). Therefore,
    $$ \E \Big\|\sum_{1 \leq i_1, \ldots, i_r \leq d} \rv{g}_{(i_1, \ldots, i_r)} T_{(i_1, \ldots, i_r)}\Big\|_{\mathrm{F}} \gtrsim \E \Big\|\sum_{1 \leq i_1, \ldots, i_r \leq d} \boldsymbol{\eps}_{(i_1, \ldots, i_r)} T_{(i_1, \ldots, i_r)}\Big\|_{\mathrm{F}} = d^{\frac{r}{2}} .$$
    We can use \Cref{cor:normtransition} to also get a upper bound for the Frobenius norm using the injective $\ell_p$-norm.
    $$d^{\frac{r}{2}} \lesssim \E \normf{T} \cleq{r} d^{\frac{r}{p}-\frac{1}{2}}\E \norminp{T} \cleq{r} d^{\frac{r}{p}-\frac{1}{2}} d^{\frac{r+1}{2}-\frac{r}{p}} = d^{\frac{r}{2}} .$$
    This implies that all inequalities here are actually equivalences, (up to constants that may depend on $r$) which finishes the proof.
\end{proof}

\begin{par}
    While the bound from \Cref{prop:trivialsigmastarbound} captures the correct dimensional dependence for the i.i.d. entry model, it performs rather poorly for most other models, especially when $\pvar{0} \asymp \typetwopvar$ holds, which is the case for example when $T_1 = \ldots = T_n$. To further illustrate the shortcomings of this bound, consider the case $n =d$, where we have tensors $T_1, \ldots, T_d \in (\R^d)^{\otimes r}$ with $\norminp{T_k} \leq 1$ for all $1 \leq k \leq d$. The best bound for $\pvar{0}$ in terms of $d$ in this case is $\pvar{0} \leq \typetwopvar \leq \sqrt{d}$. 
    Thus, \Cref{prop:trivialsigmastarbound} provides the estimate
    $$ \E \Big\|\sum_{k=1}^n \rv{g}_k T_k\Big\|_{\I_p} \cleq{r} d.$$
    This does not improve over the trivial triangle inequality:
    $$ \E \Big\|\sum_{k=1}^d \rv{g}_k T_k\Big\|_{\I_p} \leq \sum_{k=1}^d \E \norminp{\rv{g}_k T_k} \lesssim d .$$
    In contrast, our result in \Cref{cor:boundtype2constant} does not have this drawback and gives the bound
    $$ \E \Big\|\sum_{k=1}^n \rv{g}_k T_k\Big\|_{\I_p} \cleq{r,p} d^{1- \frac{1}{\max \{p,2r \}}}.$$
\end{par}

\section{Proof of Main Results}
\label{sec:proof}

\begin{par}
   The goal of this section is to first prove \Cref{thm:masterthm}, and then  \Cref{cor:boundtype2constant} as a consequence. More specifically, most of the effort will go into proving
   $$ \E \bigg[ \Big\| \sum_{k=1}^n \rv{g}_k  T_k \Big\|_{\I_p}\bigg] \cleq{r} \sqrt{p}d^{\frac{1}{2}-\frac{1}{p}}\left(\log(d+1)\pvar{1} + \max_{2 \leq q \leq r} \Big \{ \pvar{0}^{1-\frac{1}{q}} \pvar{q}^{\frac{1}{q}} \Big \} \right).$$
   The proof will be divided into three parts (\Cref{subsection:probdistestimates} - \ref{subsection:symbed} respectively). The first two parts will prove this bound for symmetric, diagonal-free tensors with a symmetric version of the $\ell_p$ injective norm. The first part deals with estimations involving the natural distance $\md$, where diagonal-freeness is used to better estimates in terms of distance $\md$. The second part then proves the covering number bound and computes the entropy integral. The third part is devoted to removing the symmetry and diagonal-free assumptions using symmetric embeddings. 
   Finally, the lower bound in \Cref{cor:boundtype2constant} will be proved in \Cref{subsec:proof_lower_bound}. 
\end{par}

\subsection{Probabilistic Distance Estimates and Diagonal-Free Tensors}\label{subsection:probdistestimates}

\begin{par}
    As seen in \Cref{sec:technique}, we will have to upper bound the expected distance between a point and a random point in an $\ell_p$-ball around it. A helpful step in this computation is to estimate the second moment of a tensor evaluated at a random point in $\pdball$. For some tensors (like the identity matrix in the case $p=2$) this quantity may be of the same order as its norm, but this issue can be resolved by assuming diagonal-freeness.
\end{par}

\begin{defn}[Diagonal-freeness]
    An order $r$ tensor $T$ is called~\emph{diagonal-free}, if $T_{i_1, \ldots, i_r}=0$ when some index appears at least twice. In other words, $T_{i_1, \ldots, i_r}=0$, if there exist $a \neq b$, such that $i_a = i_b$.
\end{defn}

\begin{par}
    As we will see in \Cref{subsection:symbed}, the symmetric embedding of a tensor is diagonal-free, so this assumption is without loss of generality. 
\end{par}

\begin{lemma}\label{lemma:ballcoordmultiplication}
    Let $\rv{b} \in \pdball$ be a random vector distributed according to the normalized Lebesgue measure on $\pdball$ and let $r \leq d$ be an integer and $2 \leq p < \infty$, then
    $$ \E[\rv{b}_1^2 \cdots \rv{b}_r^2] \leq d^{-\frac{2r}{p}}.$$
\end{lemma}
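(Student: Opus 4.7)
The plan is to exploit the permutation symmetry of the uniform distribution on $\pdball$ to reduce the claim to an inequality about elementary symmetric polynomials, then apply Maclaurin's inequality and a Hölder inclusion between $\ell_p$ and $\ell_2$ balls.

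First I would observe that since $\rv{b}$ is distributed uniformly on $\pdball$, its law is invariant under permutations of the coordinates, so $\E[\rv{b}_{i_1}^2 \cdots \rv{b}_{i_r}^2]$ takes the same value for every $r$-subset $\{i_1,\ldots,i_r\} \subseteq [d]$. Averaging over all $\binom{d}{r}$ such subsets therefore yields
\[
\E[\rv{b}_1^2 \cdots \rv{b}_r^2] = \frac{1}{\binom{d}{r}} \,\E\bigl[e_r(\rv{b}_1^2, \ldots, \rv{b}_d^2)\bigr],
\]
where $e_r$ denotes the $r$-th elementary symmetric polynomial.

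Next I would apply Maclaurin's inequality: for any nonnegative reals $x_1,\ldots,x_d$, one has $\bigl(e_r(x)/\binom{d}{r}\bigr)^{1/r} \le \tfrac{1}{d}\sum_i x_i$, equivalently
\[
e_r(x_1,\ldots,x_d) \le \binom{d}{r}\,\Bigl(\frac{1}{d}\sum_{i=1}^d x_i\Bigr)^{r}.
\]
Applying this with $x_i = \rv{b}_i^2$ and combining with the symmetry identity gives
\[
\E[\rv{b}_1^2 \cdots \rv{b}_r^2] \;\leq\; d^{-r}\,\E\bigl[\norm{\rv{b}}_2^{2r}\bigr].
\]

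Finally, to convert the $\ell_2$-norm bound back to something governed by the $\ell_p$-ball constraint, I would use Hölder's inequality in the form $\norm{\rv{b}}_2 \leq d^{\frac12 - \frac1p}\,\norm{\rv{b}}_p$, valid since $p \geq 2$. Because $\rv{b} \in \pdball$ deterministically, we have $\norm{\rv{b}}_2^{2r} \leq d^{\,r(1 - 2/p)}$ almost surely, hence $\E[\norm{\rv{b}}_2^{2r}] \leq d^{\,r - 2r/p}$. Substituting,
\[
\E[\rv{b}_1^2 \cdots \rv{b}_r^2] \;\leq\; d^{-r}\cdot d^{\,r - 2r/p} \;=\; d^{-2r/p},
\]
as desired. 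None of the steps is genuinely hard; the only conceptual point is recognizing that Maclaurin's inequality is the right tool to pass from a single $r$-product to the ``total $\ell_2$ mass,'' since a naive per-coordinate bound $\E[\rv{b}_i^p]\leq 1/d$ combined with Jensen would go the wrong way for $2r \geq p$.
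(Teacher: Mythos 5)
Your proof is correct and takes a genuinely different route from the paper's. The paper first applies Jensen's inequality (concavity of $x \mapsto x^{2/p}$ for $p\ge 2$) to reduce the claim to $\E[|\rv{b}_1|^p\cdots|\rv{b}_r|^p]\le d^{-r}$, which it then proves by induction on $r$: the inductive step multiplies by $\sum_j|\rv{b}_j|^p\le 1$ and needs a small symmetry argument ($\E[\cdots(|\rv{b}_{r-1}|^p-|\rv{b}_r|^p)^2]\ge 0$) to compare the repeated-index term $\E[|\rv{b}_1|^p\cdots|\rv{b}_{r-2}|^p|\rv{b}_{r-1}|^{2p}]$ against the distinct-index one. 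You replace that whole induction with Maclaurin's inequality applied to $e_r(\rv{b}_1^2,\ldots,\rv{b}_d^2)$, which collapses the product estimate in a single step and then closes via the deterministic containment $\pdball\subseteq d^{\frac12-\frac1p}\edball$. Both arguments pivot on the same structural fact (permutation invariance of the uniform measure on $\pdball$), and both are tight here; the paper's version works entirely with $\ell_p$-quantities and the random variable $\|\rv{b}\|_p^p$, while yours routes through the worst-case $\ell_2$-norm bound. Yours is arguably cleaner since it sidesteps the repeated-index bookkeeping, at the cost of invoking Maclaurin as external machinery. (Your closing remark about per-coordinate bounds is a reasonable caution, but note the paper does not attempt a per-coordinate decomposition either — its Jensen step is applied to the full product at once.)
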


\begin{proof}
    It suffices to show 
    $$ \E[\abs{\rv{b}_{1}}^p \cdots \abs{\rv{b}_{r}}^p] \leq d^{-r}, $$
    as then the desired statement follows from Jensen's inequality.
    $$ \E[\rv{b}_{1}^2 \cdots \rv{b}_{r}^2] \leq \E[\abs{\rv{b}_{1}}^p \cdots \abs{\rv{b}_{r}}^p]^{2/p} \leq  d^{-\frac{2r}{p}} $$
    We use induction over $r$. For $r=1$ we have $\E [\abs{\rv{b}_1}^p] \leq d^{-1}$ since 
    $$  d \E [\abs{\rv{b}_1}^p] = \sum_{j=1}^d \E [\abs{\rv{b}_j}^p] \leq 1.$$
    Assume the statement holds for $r-1$, then because $\norm{\rv{b}}_p \leq 1$ almost surely, we get
    $$ d^{-r-1} \geq \E[\abs{\rv{b}_{1}}^p \cdots \abs{\rv{b}_{r-1}}^p \cdot 1] \geq \sum_{j=1}^d \E[\abs{\rv{b}_{1}}^p \cdots \abs{\rv{b}_{r-1}}^p \abs{\rv{b}_{j}}^p].$$
    If we manage to show
    $$ \E[\abs{\rv{b}_{1}}^p \cdots \abs{\rv{b}_{r-2}}^p \abs{\rv{b}_{r-1}}^{2p}]\geq  \E[\abs{\rv{b}_{1}}^p \cdots \abs{\rv{b}_{r}}^p ],$$
    we can use the symmetry of the coordinates to finish the inductive proof. The inequality above immediately follows from coordinate symmetry when we look at the inequality
    $$ \E[\abs{\rv{b}_{1}}^p \cdots \abs{\rv{b}_{r-2}}^p (\abs{\rv{b}_{r-1}}^{p}-\abs{\rv{b}_{r}}^{p})^2]\geq 0.$$
\end{proof}

\begin{lemma}\label{lemma:expectedpnormcontraction}
    Let $T$ be a symmetric diagonal-free order $r$ tensor of dimension $d$ and let $\rv{b} \in \pdball$ be a random vector distributed according to the normalized Lebesgue measure on $\pdball$ for $2 \leq p < \infty$. We have
    $$ \E[\langle T, \rv{b}^{\otimes r} \rangle^2] \cleq{r}  d^{{-2r/p}}  \normf{T}^2.$$
\end{lemma}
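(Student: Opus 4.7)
The plan is to expand the square $\langle T, \rv{b}^{\otimes r}\rangle^2$ as a double sum over multi-indices and identify which terms survive after taking expectations. Writing
$$\E[\langle T, \rv{b}^{\otimes r}\rangle^2] = \sum_{i_1, \ldots, i_r, j_1, \ldots, j_r} T_{i_1, \ldots, i_r}\, T_{j_1, \ldots, j_r}\, \E\!\left[\rv{b}_{i_1}\cdots\rv{b}_{i_r}\,\rv{b}_{j_1}\cdots\rv{b}_{j_r}\right],$$
I would then exploit two structural features. First, the set $\pdball$ is invariant under sign flips of any individual coordinate, so the corresponding symmetry of $\rv{b}$ forces $\E[\prod_k \rv{b}_k^{a_k}]=0$ whenever some exponent $a_k$ is odd. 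Second, the diagonal-free hypothesis says that $T_{i_1,\ldots,i_r}=0$ unless $i_1,\ldots,i_r$ are pairwise distinct, and likewise for the $j$-tuple.

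Combining these two features is the crux. In any surviving term both multi-indices have pairwise distinct entries, so every coordinate index appears at most twice overall (once among the $i$'s and once among the $j$'s). The parity condition coming from sign-flip symmetry then forces every index that appears at all to appear \emph{exactly} twice, which means $\{i_1,\ldots,i_r\}=\{j_1,\ldots,j_r\}$ as sets of distinct indices. Equivalently, $(j_1,\ldots,j_r)$ is a permutation of $(i_1,\ldots,i_r)$, and the symmetry of $T$ then gives $T_{j_1,\ldots,j_r}=T_{i_1,\ldots,i_r}$. The double sum therefore collapses to
$$\E[\langle T, \rv{b}^{\otimes r}\rangle^2] = r!\sum_{\substack{i_1,\ldots,i_r\\ \text{pairwise distinct}}} T_{i_1,\ldots,i_r}^2\;\E\!\left[\rv{b}_{i_1}^2\cdots \rv{b}_{i_r}^2\right],$$
the factor $r!$ counting the permutations of a fixed distinct tuple.

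To finish I would invoke \Cref{lemma:ballcoordmultiplication}, which together with the coordinate-permutation symmetry of the uniform measure on $\pdball$ gives $\E[\rv{b}_{i_1}^2\cdots\rv{b}_{i_r}^2]\leq d^{-2r/p}$ for any fixed distinct indices $i_1,\ldots,i_r$. Pulling this uniform bound out and enlarging the sum to all of $[d]^r$ yields $\sum T_{i_1,\ldots,i_r}^2=\normf{T}^2$, hence $\E[\langle T,\rv{b}^{\otimes r}\rangle^2]\leq r!\,d^{-2r/p}\normf{T}^2$, which is the claim with an $r$-dependent implicit constant. I do not expect a serious obstacle here; the only delicate point is the parity argument in the second paragraph, and in particular the observation that the diagonal-free hypothesis is exactly what prevents surviving terms in which some index appears four times (a case where $\E[\rv{b}_i^4]$ would enter and could not be controlled by $d^{-4/p}$).
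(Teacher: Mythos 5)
Your proof is correct and follows essentially the same route as the paper's: expand the double sum, use sign-flip symmetry plus diagonal-freeness to force $\{i_1,\ldots,i_r\}=\{j_1,\ldots,j_r\}$, collapse by the symmetry of $T$ to a factor of $r!$, and finish with \Cref{lemma:ballcoordmultiplication}. The paper additionally dispatches the edge case $d<r$ via Cauchy--Schwarz, but your argument handles it implicitly since a diagonal-free tensor with $d<r$ is identically zero, so nothing is missing.
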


\begin{proof}
    In the case $d<r$ this is indeed true by the Cauchy-Schwarz inequality $\E[\langle T, \rv{b}^{\otimes r} \rangle^2] \leq \normf{T}^2 \E \normf{\rv{b}^{\otimes r}}^2 \lesssim_r \normf{T}^2$. Now assume $d \geq r$. First note that
    $$ \E[\langle T, \rv{b}^{\otimes r} \rangle^2] = \sum_{n_1,\ldots,n_r=1}^d \sum_{m_1, \ldots,m_r=1}^d\hspace{-4mm} \E[(T)_{(n_1,\ldots,n_r)}(T)_{(m_1, \ldots,m_r)} \rv{b}_{n_1} \cdots \rv{b}_{n_r} \rv{b}_{m_1} \cdots \rv{b}_{m_r}].$$
    The terms being summed can only be nonzero, when $\{n_1,\ldots,n_k\} = \{m_1,\ldots,m_k\}$, since both of these sets need to have cardinality $r$ (due to $T$ being diagonal-free) and if one index (let's say $n_1$) is only present in one of the sets, then the expectation becomes zero due to a symmetry argument ($\rv{b}_{n_1}$ can be replaced by $-\rv{b}_{n_1}$ without changing the distribution of $\rv{b}$). We can use the symmetry of $T$ to simplify the sum over the indices $m_1,\ldots, m_d$ as follows:
    $$ \E[\langle T, \rv{b}^{\otimes r} \rangle^2] =  \sum_{\substack{n_1,\ldots,n_r  \\ n_i \neq n_j \forall i \neq j }}  r! \E[(T)_{(n_1,\ldots,n_r)}^2 \rv{b}_{n_1}^2 \cdots \rv{b}_{n_r}^2] = \E[\rv{b}_{1}^2 \cdots \rv{b}_{r}^2] \hspace{-3mm} \sum_{\substack{n_1,\ldots,n_r  \\ n_i \neq n_j \forall i \neq j }} \hspace{-3mm} r! (T)_{(n_1,\ldots,n_r)}^2 $$
    Since $T$ is diagonal-free, we have
    $$ \sum_{\substack{n_1,\ldots,n_r  \\ n_i \neq n_j \forall i \neq j }} \hspace{-3mm} r! (T)_{(n_1,\ldots,n_r)}^2  = \sum_{\substack{n_1,\ldots,n_r }} \hspace{-3mm} r! (T)_{(n_1,\ldots,n_r)}^2 = r! \normf{T}^2. $$
    Combining these equalities with \Cref{lemma:ballcoordmultiplication} proves the desired statement.
\end{proof}

\begin{par}
    This estimate allows us to bound the expected natural distance between a point $x$ and a random point in a scaled $\ell_p$-ball centered at $x$. 
\end{par}

\begin{cor}\label{cor:expectedpdistance}
    Let $T_1, \ldots, T_n \in (\R^d)^{\otimes r}$ be symmetric diagonal-free tensors. Let $\rv{b} \in \pdball$ be a random vector distributed according to the normalized Lebesgue measure on $\pdball$ for $2 \leq p < \infty$ and $\rv{y} \coloneqq x + t\rv{b}$ for some $t>0$ and $x \in \pdball$. We have 
    $$ \E_{\rv{b}}[ \md(x, \rv{y}) ]  \cleq{r} \max_{1 \leq q \leq r} \left \{d^{-q/p} t^{q} \pvar{q} \right \} .$$
    
\end{cor}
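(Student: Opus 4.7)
The plan is to bound $\E_{\rv{b}}[\md(x,\rv{y})]$ via Jensen's inequality by passing to the second moment and then decomposing the tensor contraction into a polynomial in $t$ whose coefficients involve contractions of $T_k$ with varying numbers of copies of $x$ and $\rv{b}$. Specifically, since $T_k$ is symmetric, the multinomial expansion of $\langle T_k, (x+t\rv{b})^{\otimes r}\rangle$ gives
\[
\langle T_k, (x+t\rv{b})^{\otimes r}\rangle - \langle T_k, x^{\otimes r}\rangle = \sum_{q=1}^r \binom{r}{q} t^q \, T_k[\underbrace{x,\ldots,x}_{r-q},\underbrace{\rv{b},\ldots,\rv{b}}_{q}].
\]
Squaring, applying $(\sum_{q=1}^r a_q)^2 \leq r\sum_{q=1}^r a_q^2$, and summing over $k$ yields
\[
\md(x,\rv{y})^2 \;\leq\; r\sum_{q=1}^r \binom{r}{q}^2 t^{2q} \sum_{k=1}^n \big(T_k[x,\ldots,x,\underbrace{\rv{b},\ldots,\rv{b}}_{q}]\big)^2.
\]

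Next I would handle each fixed $q$. The order-$q$ tensor $R_{k,q} := T_k[x,\ldots,x,\cdot,\ldots,\cdot]$ is symmetric (because $T_k$ is) and — this is the one point that deserves explicit verification — it is also diagonal-free: if two free indices of $R_{k,q}$ coincide, then every entry of $T_k$ appearing in the defining sum has two coinciding indices, so vanishes by the diagonal-freeness of $T_k$. Hence \Cref{lemma:expectedpnormcontraction} applies to $R_{k,q}$ and gives
\[
\E_{\rv{b}}\big[\langle R_{k,q}, \rv{b}^{\otimes q}\rangle^2\big] \;\lesssim_r\; d^{-2q/p}\,\normf{R_{k,q}}^2.
\]
Summing over $k$, the variance identity in \Cref{lemma:variances} and the fact that $x \in \pdball$ give
\[
\sum_{k=1}^n \normf{T_k[x,\ldots,x,\cdot,\ldots,\cdot]}^2 \;\leq\; \pvar{q}^2.
\]

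Combining these bounds, taking expectation in the earlier display, and then applying Jensen's inequality, we obtain
\[
\E_{\rv{b}}[\md(x,\rv{y})]^2 \;\leq\; \E_{\rv{b}}[\md(x,\rv{y})^2] \;\lesssim_r\; \sum_{q=1}^r t^{2q} d^{-2q/p}\,\pvar{q}^2 \;\leq\; r\cdot \max_{1 \leq q \leq r}\big\{t^{2q} d^{-2q/p}\,\pvar{q}^2\big\},
\]
and taking square roots yields the claimed bound. There is no real obstacle here: the only delicate step is checking that partial contractions of a diagonal-free symmetric tensor remain diagonal-free, which unlocks the sharp moment estimate of \Cref{lemma:expectedpnormcontraction} in each order $q$; everything else is algebraic bookkeeping plus Cauchy–Schwarz and Jensen.
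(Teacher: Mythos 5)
Your proposal is correct and follows essentially the same route as the paper's own proof: Jensen to pass to the second moment, a binomial/multinomial expansion of $\langle T_k,(x+t\rv{b})^{\otimes r}\rangle$ using symmetry, a power-mean (Cauchy--Schwarz) bound to split the square of the sum, an appeal to \Cref{lemma:expectedpnormcontraction} on the order-$q$ contraction $T_k x^{\otimes (r-q)}$, and then \Cref{lemma:variances} to replace the sum of squared Frobenius norms by $\pvar{q}^2$. You are slightly more careful than the paper in one spot — you explicitly verify that $T_k x^{\otimes(r-q)}$ remains diagonal-free, which the paper asserts without comment — but otherwise the argument is identical.
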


\begin{proof}
     By an application of Jensen's inequality, it suffices to show
    $$ \E_{\rv{b}}[ \md(x, \rv{y})^2] \cleq{r}  \max_{1 \leq q \leq r} \left \{d^{-2q/p} t^{2q} \textstyle \sum_{k=1}^n \normf{T_k x^{\otimes (r-q)}}^2 \right \},$$
    since by \Cref{lemma:variances} the inequality $\sum_{k=1}^n \normf{T_k x^{\otimes (r-q)}}^2 \leq \pvar{q}^2$ holds. We first consider the case of a single tensor $T_k$. Let
    $$ D_k \coloneqq  \E_{\rv{b}}[ (\langle T_k, x^{\otimes r} \rangle - \langle T_k, \rv{y}^{\otimes r} \rangle)^2] = \E_{\rv{b}}[ (\langle T_k, x^{\otimes r} \rangle - \langle T_k, (x + t\rv{b})^{\otimes r} \rangle)^2] .$$
    Using the symmetry of $T_k$, we can expand the expression above as follows:
    $$ D_k = \E_{\rv{b}}[ ( \textstyle \sum_{q=1}^r \binom{r}{q} t^q \langle T_k, x^{\otimes r-q} \otimes \rv{b}^{\otimes q} \rangle)^2] = \E_{\rv{b}}[ ( \textstyle \sum_{q=1}^r \binom{r}{q} t^q \langle T_k x^{\otimes r-q},   \rv{b}^{\otimes q} \rangle)^2] .$$
    We will use the Cauchy-Schwarz inequality to upper bound $D_k$ by a sum of squares.
    $$ D_k \cleq{r} \textstyle \sum_{q=1}^r t^{2q} \E_\rv{b}[\langle T_k x^{\otimes r-q},   \rv{b}^{\otimes q} \rangle^2] .$$
    The tensor $T_k x^{\otimes r-q}$ is a diagonal-free order $q$ tensor, which allows us to use \Cref{lemma:expectedpnormcontraction} to upper bound $\E_\rv{b}[\langle T_k x^{\otimes r-q},   \rv{b}^{\otimes q} \rangle^2]$.
    $$ D_k \cleq{r} \textstyle \sum_{q=1}^r t^{2q} \E_\rv{b}[\langle T_k x^{\otimes r-q},   \rv{b}^{\otimes q} \rangle^2] \cleq{r}   \textstyle \sum_{q=1}^r t^{2q}d^{-2q/p} \normf{T_k x^{\otimes r-q}}^2 .$$
    Summing this bound over $k$ and bounding the sum over the $q$ by their maximum gives
    $$ \E_{\rv{b}}[ \md(x, \rv{y})^2] = \sum_{k=1}^n D_k \cleq{r}   \max_{1 \leq q \leq r} \left \{d^{-2q/p} t^{2q} \textstyle \sum_{k=1}^n \normf{T_k x^{\otimes (r-q)}}^2 \right \} . $$
\end{proof}

\subsection{Covering Number Bound}

\begin{par}
    This subsection contains the key part of the proof. We will summarize in short again how the covering numbers are going to be bounded. We take a maximal $2\eps$-seperated set, (which is also a $2\eps$-covering) of $\pdball$ with respect to the natural distance. The $\eps$-balls around the points in the seperated set are all disjoint, which means their intersection scaled $\ell_p$-balls around their centers are also disjoint. We can intersect all of these sets with a larger $\ell_p$-ball centered at $0$, while still retaining a constant fraction of their original volume, so by estimating the volume of these intersections we can bound the maximal number of such sets, that could fit in the larger $\ell_p$-ball.
\end{par}

\begin{par}
    By the triangle inequality, the ball $x + t \pdball$ is contained in $(1+ t) \pdball$. However, using a ball of this radius to perform our volume argument will be too loose. Instead we will show that at least half of the ball $x + t \pdball$ is contained in $\sqrt{p-1+ t^2} \pdball$.  This statement is a direct consequence of $2$-uniform convexity in $L_p$ spaces. It has been extended to Schatten classes in \cite{LBC94} (Theorem 1), here we state a much weaker version of the result.
\end{par}

\begin{thm}[2-uniform convexity]\label{thm:weakparallelogram}
    Let $x,y \in \R^d$ and $p \geq 2$. It holds
    $$ \bigg( \frac{\norm{x-y}_p^p + \norm{x+y}_p^p}{2} \bigg)^{2/p} \leq \norm{x}_p^2 + (p-1)\norm{y}_p^2 .$$
\end{thm}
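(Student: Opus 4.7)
The plan reduces the $d$-dimensional inequality to its scalar (two-point) analogue and then promotes it to vectors using Minkowski's inequality in $\ell_{p/2}$.

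First I would establish the scalar two-point inequality
$$ \left(\frac{|a+b|^p + |a-b|^p}{2}\right)^{2/p} \leq a^2 + (p-1) b^2 \qquad (\star) $$
for all $a, b \in \R$ and $p \geq 2$. By homogeneity it suffices to take $a = 1$ and show that the one-variable function $\phi(t) := (((1+t)^p + (1-t)^p)/2)^{2/p} - (1 + (p-1)t^2)$ is nonpositive on $[-1, 1]$. A direct computation gives $\phi(0) = \phi'(0) = 0$ together with the fact that the quadratic Taylor coefficient of the first term equals $p - 1$ exactly; the inequality then follows by a Taylor-comparison or direct second-derivative argument. This is the classical Ball--Pisier / Ball--Carlen--Lieb two-point inequality encoding $2$-uniform convexity of $L_p$ for $p \geq 2$.

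Given $(\star)$, the passage to $\R^d$ is essentially algebraic. I would apply $(\star)$ coordinate-wise with $a = x_i$, $b = y_i$ and raise both sides to the $p/2$-th power (valid since $p/2 \geq 1$ and both sides are nonnegative) to get
$$ \frac{|x_i+y_i|^p + |x_i-y_i|^p}{2} \leq \bigl( x_i^2 + (p-1) y_i^2 \bigr)^{p/2}. $$
Summing over $i$ and taking the $2/p$-th power gives
$$ \left(\frac{\norm{x+y}_p^p + \norm{x-y}_p^p}{2}\right)^{2/p} \leq \left( \sum_{i=1}^d \bigl(x_i^2 + (p-1)y_i^2\bigr)^{p/2} \right)^{2/p}. $$
Setting $u_i = x_i^2$ and $v_i = (p-1) y_i^2$, the right-hand side is exactly $\norm{u+v}_{p/2}$; since $p/2 \geq 1$, Minkowski's inequality yields $\norm{u+v}_{p/2} \leq \norm{u}_{p/2} + \norm{v}_{p/2} = \norm{x}_p^2 + (p-1)\norm{y}_p^2$, completing the argument.

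The main obstacle is $(\star)$: the vector step is a one-line Minkowski reduction, but $(\star)$ genuinely requires single-variable analysis with the \emph{sharp} constant $p - 1$. Any slack in this constant would propagate directly through the subsequent summation/Minkowski steps and weaken the final conclusion, so matching $(p-1)$ exactly is the delicate part of the argument; naive routes (e.g.\ replacing $(p-1)$ by a crude upper bound on $\phi''$) would visibly degrade the theorem.
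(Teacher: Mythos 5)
Your Minkowski reduction from the vector inequality to the scalar two-point inequality $(\star)$ is correct and is the standard route; note, though, that the paper does not actually prove this theorem — it cites \cite{LBC94} — so you are supplying an argument the paper only references.

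The gap is in your treatment of $(\star)$. Since the quadratic Taylor coefficient of $\bigl(\tfrac{(1+t)^p + (1-t)^p}{2}\bigr)^{2/p}$ at $t=0$ equals $p-1$ exactly, you in fact have $\phi(0)=\phi'(0)=\phi''(0)=0$ (and $\phi'''(0)=0$ by evenness), so a ``direct second-derivative argument'' at the origin yields nothing; the sign of $\phi$ is governed by higher-order global behavior, not by local Taylor data. A term-by-term ``Taylor comparison'' is also not routine: for non-integer $p$ the coefficients $\binom{p}{2k}$ of $\tfrac{(1+t)^p+(1-t)^p}{2}$ change sign once $2k>p$, so one cannot simply compare coefficients of the two series. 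The sharp two-point inequality with constant exactly $p-1$ really is the content of the Ball--Carlen--Lieb lemma. A clean way to finish would be to show $\Psi(s) := \bigl(\tfrac{(1+\sqrt{s})^p + (1-\sqrt{s})^p}{2}\bigr)^{2/p}$ is concave on $[0,1]$ and then conclude via the tangent line $\Psi(s)\le \Psi(0)+\Psi'(0)s = 1+(p-1)s$; but this requires establishing a genuine global concavity/monotonicity estimate, which your sketch does not do. You should either carry out such an argument in full or simply cite the two-point lemma as the paper does for the full theorem. A further minor omission: your restriction to $t\in[-1,1]$ needs the observation that, since the left side of $(\star)$ is symmetric in $a,b$ while the right side is minimized over the two orderings when the factor $(p-1)\ge 1$ multiplies the smaller of $a^2,b^2$, it suffices to treat $|a|\ge|b|$.
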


\begin{cor}\label{cor:halvinglpball}
    Let $t>0$, $2 \leq p < \infty$, $x \in \pdball$ deterministic and $\rv{b} \in \pdball$ uniformly distributed according to the normalized Lebesgue measure. We have 
    $$ \Prob[\norm{x + t\rv{b}}_p \leq \sqrt{t^2 + p-1}] \geq \frac{1}{2}.$$
\end{cor}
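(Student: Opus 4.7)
The plan is to apply \Cref{thm:weakparallelogram} with the roles of the two arguments swapped, so that the large factor $(p-1)$ multiplies $\|x\|_p^2$ (which is at most $1$) rather than $t^2\|\rv{b}\|_p^2$. Explicitly, feeding the first argument $t\rv{b}$ and the second argument $x$ into \Cref{thm:weakparallelogram} gives
$$\left(\frac{\|t\rv{b} - x\|_p^p + \|t\rv{b} + x\|_p^p}{2}\right)^{2/p} \leq t^2\|\rv{b}\|_p^2 + (p-1)\|x\|_p^2 \leq t^2 + (p-1),$$
where the last step uses $\rv{b}, x \in \pdball$. Raising to the power $p/2$ and using $\min(a,b) \leq (a+b)/2$ for nonnegative reals shows that pointwise, for every realization of $\rv{b}$, at least one of $\|x + t\rv{b}\|_p$ and $\|x - t\rv{b}\|_p$ is at most $\sqrt{t^2 + p - 1}$.

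Next, I would exploit the reflection symmetry of $\pdball$: since the normalized Lebesgue measure on $\pdball$ is invariant under $\rv{b} \mapsto -\rv{b}$, the random variables $\|x + t\rv{b}\|_p$ and $\|x - t\rv{b}\|_p$ have the same distribution. Let $E \coloneqq \{\|x + t\rv{b}\|_p \leq \sqrt{t^2+p-1}\}$ and $F \coloneqq \{\|x - t\rv{b}\|_p \leq \sqrt{t^2+p-1}\}$; then $\Prob(E) = \Prob(F)$, and the previous paragraph shows $\Prob(E \cup F) = 1$. A union bound then gives $2\Prob(E) = \Prob(E) + \Prob(F) \geq \Prob(E \cup F) = 1$, which yields $\Prob(E) \geq 1/2$ as required.

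There is no substantial obstacle here once one spots that \Cref{thm:weakparallelogram} must be applied with $x$ (rather than $t\rv{b}$) as the argument that gets multiplied by $p-1$; the rest is a short symmetrization via $\Prob(E) + \Prob(F) \geq \Prob(E \cup F)$. Note that this argument crucially uses $\|x\|_p \leq 1$ to cancel the $(p-1)$ factor, which is why the bound takes the form $\sqrt{t^2 + p - 1}$ rather than $\sqrt{(p-1)t^2 + 1}$; the asymmetric form is what makes the halved $\ell_p$-ball useful in the subsequent volumetric covering-number argument in \Cref{sec:technique}.
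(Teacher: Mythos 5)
Your argument is correct and is essentially the paper's proof: both apply \Cref{thm:weakparallelogram} with $t\rv{b}$ in the first slot and $x$ in the second (so the $(p-1)$ factor multiplies $\norm{x}_p^2 \leq 1$), and both exploit the $\rv{b}\mapsto -\rv{b}$ symmetry of $\pdball$. The paper phrases the symmetrization by introducing an independent Rademacher variable $\boldsymbol{\eps}$ and conditioning on $\rv{b}=b$, whereas you phrase it as $\Prob(E)=\Prob(F)$ plus a union bound on $E\cup F$, but these are the same step in different notation.
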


\begin{proof}
    By symmetry of $\rv{b}$, the following random variables are identically distributed:
    $$ \norm{x + t\rv{b}}_p \sim \norm{x + t \boldsymbol{\eps} \rv{b}}_p \sim \norm{\boldsymbol{\eps}x + t\rv{b}}_p $$
    Here $\boldsymbol{\eps}$ is a rademacher random variable ($\boldsymbol{\eps}= \pm 1$ with probability $1/2$ each) which is independent from $\rv{b}$. Conditioning on $\rv{b} = b$ for any fixed $b \in \pdball$, we get
    $$ \Prob\big[\norm{\boldsymbol{\eps}x + tb}_p = \min \big\{ \norm{x + tb}_p , \norm{-x + tb}_p \big\} \Big] = \frac{1}{2}.$$
    Using \Cref{thm:weakparallelogram}, we see that 
    $$ \min \big\{ \norm{x + tb}_p , \norm{-x + tb}_p \big\} \leq \bigg( \frac{\norm{tb-x}_p^p + \norm{tb+x}_p^p}{2} \bigg)^{1/p} \leq \sqrt{\norm{tb}_p^2 + (p-1)\norm{x}_p^2} . $$
    Since $b$ and $x$ are in $\pdball$, we get that for any fixed outcome $b \in \pdball$, 
    $$ \Prob \big[\norm{\boldsymbol{\eps}x + tb}_p \leq \sqrt{t^2 + p-1} \big] \geq \frac{1}{2}.$$
    Integrating this conditional probability w.r.t. the distribution of $\rv{b}$ finishes the proof.
\end{proof}

\begin{par}
    This improvement is crucial, as large values of $t$ contribute more to the entropy integral. We now have all ingredients for our volumetric estimate the covering numbers.
\end{par}

\begin{lemma}[Covering number estimate]\label{lemma:pdvolumeargument}
    Let $T_1, \ldots, T_n \in (\R^d)^{\otimes r}$ be symmetric diagonal-free tensors and $2 \leq p < \infty$. For any $t>0$ we have
    $$\mathcal{N}(\mathbb{B}_p^d, \md, C(r) \textstyle \max_{1 \leq q \leq r} \left \{d^{-q/p} t^{q} \pvar{q} \right \})\leq 4e^{\frac{(p-1)d}{2t^2}}, $$
    where $C(r)>0$ is a constant only depending on $r$.
\end{lemma}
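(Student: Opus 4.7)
The plan is to follow the volumetric covering argument sketched in the warm-up of Section~1.4, generalized from Euclidean balls to $\ell_p$ balls and to higher-order diagonal-free tensors. Set $\varepsilon := C(r)\max_{1 \leq q \leq r}\{d^{-q/p} t^{q} \pvar{q}\}$ with $C(r)$ a sufficiently large constant to be fixed later, and let $D = \{x_1, \ldots, x_N\} \subseteq \pdball$ be a maximal $\varepsilon$-separated subset with respect to $\md$. By maximality $D$ is automatically an $\varepsilon$-covering, so $\mathcal{N}(\pdball, \md, \varepsilon) \leq N$, and the triangle inequality for the semi-metric $\md$ forces the balls $B_\md(x_i, \varepsilon/2)$ to be pairwise disjoint.

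To each center I attach the auxiliary set
\begin{equation*}
G_i \;:=\; B_\md(x_i, \varepsilon/2) \;\cap\; (x_i + t\,\pdball) \;\cap\; \sqrt{p-1+t^2}\,\pdball,
\end{equation*}
which inherits pairwise disjointness from the $\md$-balls while being contained in the single $\ell_p$ ball of radius $\sqrt{p-1+t^2}$. The crucial step is a uniform lower bound on $\vol(G_i)$. Sample $\rv{y} = x_i + t\rv{b}$ with $\rv{b}$ uniform on $\pdball$. \Cref{cor:expectedpdistance} combined with Markov's inequality gives $\Prob[\md(x_i, \rv{y}) > \varepsilon/2] \leq 1/4$ as soon as $C(r)$ is chosen to be a large enough multiple of the implicit constant in that corollary; independently, \Cref{cor:halvinglpball} yields $\Prob[\norm{\rv{y}}_p > \sqrt{p-1+t^2}] \leq 1/2$. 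A union bound then produces $\Prob[\rv{y} \in G_i] \geq 1/4$, i.e.\ $\vol(G_i) \geq \tfrac14 t^d \vol(\pdball)$ by the scaling and translation invariance of Lebesgue measure.

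Additivity of Lebesgue measure on the disjoint union $\bigsqcup_{i=1}^N G_i \subseteq \sqrt{p-1+t^2}\,\pdball$ then gives
\begin{equation*}
N \cdot \tfrac14 \, t^d \vol(\pdball) \;\leq\; (p-1+t^2)^{d/2}\,\vol(\pdball),
\end{equation*}
and rearranging together with the elementary inequality $1+x \leq e^x$ yields $N \leq 4(1 + (p-1)/t^2)^{d/2} \leq 4 e^{(p-1)d/(2t^2)}$, which is the claimed bound. The main obstacle is that a naive triangle-inequality inclusion $x_i + t\,\pdball \subseteq (1+t)\pdball$ would produce an entropy estimate of shape $e^{O(d\log t)}$, which is disastrous in the large-$t$ regime that dominates the entropy integral; \Cref{cor:halvinglpball}, a volumetric manifestation of $2$-uniform convexity of $\ell_p$, is exactly what replaces the $\log t$ by $1/t^2$ in the exponent. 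Coupling this gain with the diagonal-free second-moment bound of \Cref{cor:expectedpdistance} is the core of the argument and the reason both the symmetry and diagonal-free reductions of Subsection~\ref{subsection:symbed} are necessary inputs.
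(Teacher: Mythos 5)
Your proposal is correct and follows the paper's argument essentially verbatim: a maximal separated set as a covering, the auxiliary regions $G_i$ built by intersecting the $\md$-ball with the translated $\ell_p$-ball $x_i + t\pdball$ and the slicing ball $\sqrt{p-1+t^2}\,\pdball$, the lower bound on $\vol(G_i)$ via Markov's inequality applied to \Cref{cor:expectedpdistance} together with \Cref{cor:halvinglpball}, and the final volume comparison. The only cosmetic difference is bookkeeping of the constant: the paper phrases the separation scale as $8M$ with $M \geq \E[\md]$, whereas you absorb the same factor into the choice of $C(r)$.
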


\begin{proof}
    Let $x \in \pdball$ and let $\rv{y} \coloneqq x + t\rv{b}$ be a uniformly distributed random vector in $x + t\pdball$ as in \Cref{cor:halvinglpball}. We estimated the expected distance between $x$ and $\rv{y}$ in \Cref{cor:expectedpdistance}. There exists a constant $C(r)>0$ depending on $r$, such that
    $$  M \coloneqq C(r)  \max_{1 \leq q \leq r} \left \{d^{-q/p} t^{q} \pvar{q} \right \} \geq \E_{\rv{b}}[ \md(x, \rv{y}) ]$$
    holds. Thus, an application of Markov's inequality yields:
    $$ \Prob_{\rv{b}} \left[ \md(x, \rv{y}) \geq 4M \right] \leq \frac{1}{4} .$$
    \Cref{cor:halvinglpball} gives the estimate
    $$ \Prob[\norm{\rv{y}}_p > \sqrt{t^2 + p-1}] \leq \frac{1}{2}.$$
    By union bounding the probability of either of these events occurring, we get the following lower bound on the complement:
    \[ \Prob[\norm{\rv{y}}_p \leq \sqrt{t^2 + p-1}, \: \md(x, \rv{y}) < 4M] \geq \frac{1}{4} .\]
    In other words, at least a ``quarter" (of the Lebesgue measure) of $x + t\pdball$ has natural distance at most $4M$ from $x$ and $\ell_p$-norm at most $\sqrt{p-1+t^2}$. 
    Now let $x_1, \ldots, x_N$ be a maximal $8M$-separated set in $\pdball$ with respect to $\md$. This set forms a $8M$-covering of $\pdball$ with respect to $\md$, as otherwise there would be a point in $\pdball$ that has distance at least $8M$ from $x_1, \ldots, x_N$, which contradicts maximality. The open balls $\operatorname{B}_{\md}(x_i, 4M)$, $1 \leq i \leq N$ must be disjoint, hence the sets 
    $$G_i \coloneqq \operatorname{B}_{\md} (x_i, 4M) \cap \sqrt{p-1+t^2}\pdball \cap (x_i+ t\pdball)$$
    are also disjoint. By the computation at the start of our proof we have 
    $$\vol(G_i) \geq \frac{1}{4}\vol(x_i+ t\pdball) = \frac{t^d}{4}\vol(\pdball).$$
    The union of all $G_i$ is also contained in $\sqrt{p-1+t^2}\pdball$, therefore, 
    $$ N\frac{t^d}{4}\vol(\pdball) \leq \sum_{i=1}^N \vol(G_i) \leq \vol(\sqrt{p-1+t^2}\pdball) = (p-1+t^2)^{d/2}\vol(\pdball),$$
    which implies the desired covering number bound 
    $$ \mathcal{N}(\pdball, \md, 8M) \leq N \leq 4 \left( \frac{p-1+t^2}{t^2} \right)^{d/2} \leq 4e^{\frac{(p-1)d}{2t^2}}.$$
\end{proof}

\begin{rem}\label{rem:latalareference}
    While preparing this manuscript, it was brought to our attention\footnote{We thank Ramon van Handel for pointing out the reference~\cite{Lat06}.} 
    that an essentially equivalent version of this covering number estimate has already been attained in the special case $p=2$ in~\cite[Corollary 2]{Lat06}. In contrast to our techniques, Latała's argument uses an inductive procedure over the tensor order, combined with Gaussian measure estimates. 

    The bound was later used to obtain a different tensor concentration inequality for the special case of $p=2$ using chaining machinery  (Theorem 2 and Theorem 3 in \cite{Lat06}). This bound does not have the logarithmic factor in \Cref{thm:masterthm} but comes with additional terms which are too large for the purpose of estimating type constants. For the application in~\cite{Lat06} the bound sufficed, as the dimension-freeness of the bound was one of the main goals in Latała's paper. Although it is not the focus of the present manuscript, it might further be possible to use Latała's chaining argument to obtain an analogue of \Cref{thm:masterthm} without logarithmic factors.

\end{rem}

\begin{cor}\label{cor:pdlogcovernumbers}
    Let $T_1, \ldots, T_n \in (\R^d)^{\otimes r}$ be symmetric diagonal-free tensors and $2 \leq p < \infty$. For any $s>0$ we have
    $$ \sqrt{\log(\mathcal{N}(\mathbb{B}_p^d, \md, s))} \cleq{r} 1 + \sqrt{p} d^{\frac{1}{2}-\frac{1}{p}} \max_{1 \leq q \leq r} \bigg \{ s^{-\frac{1}{q}} \pvar{q}^{\frac{1}{q}} \bigg \} $$
\end{cor}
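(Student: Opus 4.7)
\Cref{cor:pdlogcovernumbers} is just an inversion of \Cref{lemma:pdvolumeargument}. The lemma gives, for every free parameter $t > 0$, a covering number estimate at the specific scale $s(t) \coloneqq C(r)\max_{1 \leq q \leq r}\{d^{-q/p} t^q \pvar{q}\}$. To obtain a bound at a prescribed scale $s > 0$, I would choose $t = t(s)$ so that $s(t) \leq s$, and then read off the exponent $(p-1)d/(2t^2)$.

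Concretely, the constraint $s(t) \leq s$ is equivalent to $t \leq d^{1/p}\bigl(s/(C(r)\pvar{q})\bigr)^{1/q}$ for every $1 \leq q \leq r$, so the largest admissible choice is
\[
t \;=\; d^{1/p}\min_{1 \leq q \leq r}\bigl(s/(C(r)\pvar{q})\bigr)^{1/q}, \qquad \frac{1}{t^2} \;\cleq{r}\; d^{-2/p}\max_{1 \leq q \leq r} s^{-2/q}\pvar{q}^{2/q}.
\]
Substituting into \Cref{lemma:pdvolumeargument} and using the elementary bound $\sqrt{\log(4 e^A)} \leq \sqrt{\log 4} + \sqrt{A} \lesssim 1 + \sqrt{A}$ for $A \geq 0$ converts the covering number bound into
\[
\sqrt{\log\mathcal{N}(\pdball,\md,s)} \;\lesssim\; 1 + \sqrt{\tfrac{(p-1)d}{2t^2}} \;\cleq{r}\; 1 + \sqrt{p}\, d^{\frac12 - \frac1p}\max_{1 \leq q \leq r}\bigl\{s^{-1/q}\pvar{q}^{1/q}\bigr\},
\]
where $\sqrt{p-1}$ is absorbed into $\sqrt{p}$ up to a universal constant.

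There is no real obstacle here beyond careful bookkeeping of the constants depending on $r$ and $p$ as they propagate through the inversion. Degenerate cases are easy: if some $\pvar{q}=0$ then the corresponding term in the minimum is simply omitted (the lemma is valid for any $t > 0$), and if $s$ is so large that $\mathcal{N} = 1$ the bound is trivially absorbed by the additive constant $1$ on the right-hand side, which itself comes from the $\sqrt{\log 4}$ appearing in the lemma.
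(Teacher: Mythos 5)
Your proposal is correct and follows essentially the same route as the paper: solve $C(r)\max_q\{d^{-q/p}t^q\pvar{q}\}=s$ for $t$ (you phrase it as choosing the largest $t$ with $s(t)\le s$, which yields the identical $t$ since $s(t)$ is continuous and nondecreasing), and then read off $\sqrt{(p-1)d/(2t^2)}$ from \Cref{lemma:pdvolumeargument}. The bookkeeping of $C(r)^{1/q}$ into the implicit $r$-dependent constant and the handling of the degenerate case $\pvar{q}=0$ match what the paper does.
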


\begin{proof}
    Let $s > 0$ and pick $t>0$, such that $s= C(r) \textstyle \max_{1 \leq q \leq r} \left \{d^{-q/p} t^{q} \pvar{q} \right \}$ holds, with $C(r)$ being the constant in \Cref{lemma:pdvolumeargument}. (Such a $t$ always exists by continuity of the previous expression with respect to $t$, if any of the variances is nonzero.) We claim
    $$ \frac{\sqrt{d}}{t} \cleq{r} d^{\frac{1}{2}-\frac{1}{p}} \max_{1 \leq q \leq r} \bigg \{ s^{-\frac{1}{q}}\pvar{q}^{\frac{1}{q}} \bigg \}$$
    holds, which combined with \Cref{lemma:pdvolumeargument} yields the desired statement. To prove our claim, let $q$ be the index, such that
    $$ s= C(r) d^{-q/p} t^{q} \pvar{q} $$
    holds. Rearranging this equality shows
    $$ \frac{\sqrt{d}}{t} =  C(r)^{\frac{1}{q}}s^{-\frac{1}{q}}d^{\frac{1}{2}-\frac{1}{p}} \pvar{q}^{\frac{1}{q}}.$$
    The claim follows by taking the maximum on the right side over all indices $1\leq q \leq r$.
\end{proof}

\begin{par}
    The estimate from \Cref{cor:pdlogcovernumbers} is problematic for small $s$, since the entropy integral would in that case diverge. Fortunately, we can replace the estimate by the estimate from \Cref{cor:pdlogcovernumberspnorm}. It remains to integrate these bounds using Dudley's entropy integral. The main result for the norm of arbitrary tensors will follow in the next section from a technical trick using the symmetric embedding. 
\end{par}

\begin{thm}\label{thm:simplifiedpdbound}
    Let $T_1, \ldots, T_n \in (\R^d)^{\otimes r}$ be symmetric diagonal-free tensors and $2 \leq p < \infty$. If $\rv{g}_1, \ldots, \rv{g}_n$ are i.i.d. standard Gaussians, then
    $$ \E \left[ \sup_{u \in \pdball}  \sum_{k=1}^n \rv{g}_k \langle T_k, u^{\otimes r} \rangle \right] \cleq{r} \sqrt{p}d^{\frac{1}{2}-\frac{1}{p}}\bigg(\log(d+1)\pvar{1} + \max_{2 \leq q \leq r} \left \{ \pvar{0}^{1-\frac{1}{q}} \pvar{q}^{\frac{1}{q}} \bigg \} \right).$$
\end{thm}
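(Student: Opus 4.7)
The plan is to bound $\E[\sup_{u \in \pdball} g_u]$ where $g_u := \sum_{k=1}^n \rv{g}_k\langle T_k, u^{\otimes r}\rangle$ by applying Dudley's entropy integral (\Cref{thm:entropyintegral}) with respect to the natural distance $\md$. By \Cref{cor:pdlogcovernumberspnorm}, the integrand $\sqrt{\log \mathcal{N}(\pdball, \md, s)}$ vanishes once $s \geq r\pvar{0}$, reducing the task to estimating $\int_0^{r\pvar{0}}\sqrt{\log \mathcal{N}(\pdball, \md, s)}\,ds$.

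The two covering-number bounds at our disposal are complementary: the Slepian-type bound of \Cref{cor:pdlogcovernumberspnorm} is mild for small $s$, while the volumetric estimate of \Cref{cor:pdlogcovernumbers} controls large $s$ but contains a divergent $q=1$ contribution $\sqrt{p}\,d^{1/2-1/p}s^{-1}\pvar{1}$ inside its maximum. I would therefore split the integral at $s_0 := r\pvar{0}/d$, using \Cref{cor:pdlogcovernumberspnorm} on $[0, s_0]$ and \Cref{cor:pdlogcovernumbers} on $[s_0, r\pvar{0}]$. This specific choice makes $\log(r\pvar{0}/s_0) = \log d$, so the $q=1$ term integrates to $\sqrt{p}\,d^{1/2-1/p}\pvar{1}\log d$, matching the $\log(d+1)\pvar{1}$ factor in the target. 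For each $q \geq 2$, an elementary integration yields $\int_{s_0}^{r\pvar{0}} s^{-1/q}\pvar{q}^{1/q}\,ds \leq \tfrac{q}{q-1}(r\pvar{0})^{1-1/q}\pvar{q}^{1/q} \lesssim_r \pvar{0}^{1-1/q}\pvar{q}^{1/q}$, supplying the $M := \max_{2 \leq q \leq r}\{\pvar{0}^{1-1/q}\pvar{q}^{1/q}\}$ term after bounding the maximum by a sum (losing a harmless factor of $r$).

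There are then two residual terms of order $r\pvar{0}$: the integral over $[s_0, r\pvar{0}]$ of the additive ``$1$'' in \Cref{cor:pdlogcovernumbers}, and the piece $\int_0^{s_0}\sqrt{d\log(3r\pvar{0}/s)}\,ds \lesssim s_0\sqrt{d\log(3d)} \lesssim r\pvar{0}\sqrt{\log(d+1)/d}$ from \Cref{cor:pdlogcovernumberspnorm}. The main obstacle is to absorb $r\pvar{0}$ into the target. For this I would establish the comparison $\pvar{q-1}^2 \leq d^{1-2/p}\pvar{q}^2$ for every $q$: using the formulation in \Cref{lemma:variances}, the quantity $\sum_k\|T_k[u_1,\dots,u_{r-q+1},\cdot,\dots,\cdot]\|_F^2$ can be rewritten as $u_{r-q+1}^{\mathsf{T}} M u_{r-q+1}$ for a PSD matrix $M$ whose trace equals $\sum_k\|T_k[u_1,\dots,u_{r-q},\cdot,\dots,\cdot]\|_F^2$, and then $\sup_{u\in \pdball}u^{\mathsf{T}} M u = \|M\|_{\I_p} \leq d^{1-2/p}\|M\|_{\mathrm{op}} \leq d^{1-2/p}\operatorname{Tr}(M)$, where the first inequality uses $\pdball \subseteq d^{1/2-1/p}\edball$. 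Iterating gives $\pvar{0} \leq d^{q(1/2-1/p)}\pvar{q}$ for every $q$, and factoring $\pvar{0} = \pvar{0}^{1/2}\pvar{0}^{1/2}$ at $q=2$ yields $\pvar{0} \leq d^{1/2-1/p}\pvar{0}^{1/2}\pvar{2}^{1/2} \leq d^{1/2-1/p}M$, absorbing $r\pvar{0}$ into the target. Summing all contributions then gives \Cref{thm:simplifiedpdbound}.
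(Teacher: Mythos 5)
Your proposal is correct and follows essentially the same route as the paper's proof: Dudley's integral split between the Slepian-type bound (\Cref{cor:pdlogcovernumberspnorm}) near zero and the volumetric bound (\Cref{cor:pdlogcovernumbers}) up to the diameter, with residual $\pvar{0}$-sized terms absorbed via the comparison $\pvar{0} \leq d^{\frac12-\frac1p}\pvar{1}$ (which you phrase, equivalently, as a trace bound on a PSD quadratic form, and then generalize to all $q$). The only cosmetic differences are the split point ($r\pvar{0}/d$ versus the paper's $\pvar{0}/\sqrt d$) and that you absorb the residuals into the $\max_{q\geq 2}$ term rather than into the $\pvar{1}\log(d+1)$ term; both are harmless.
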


\begin{proof}
    Let $\Delta$ be the diameter of $\pdball$ with respect to $\md$. Dudley's entropy integral gives:
    $$ \E \bigg[ \sup_{u \in \edball}  \sum_{k=1}^n \rv{g}_k \langle T_k, u^{\otimes r} \rangle \bigg] \lesssim \int_0^{\xi} \sqrt{\log(\mathcal{N}(\pdball, \md, s))} \, ds + \int_\xi^{\Delta} \sqrt{\log(\mathcal{N}(\pdball, \md, s))} \, ds$$
    The integral from $0$ to $\xi$ will use the estimate from \Cref{cor:pdlogcovernumberspnorm}. We substitute $s' = s/\xi$.
    $$ \int_0^{\xi}\sqrt{d\log(3r \pvar{0}/s)} \, ds = \int_0^{1}\xi \sqrt{d\log(3r \pvar{0}/(s'\xi))} \, ds' \lesssim \xi \sqrt{d\log(3r \pvar{0}/\xi)} + \xi \sqrt{d} $$
    Setting $\xi = \pvar{0}/\sqrt{d}$ yields
    $$ \int_0^{\xi} \sqrt{\log(\mathcal{N}(\pdball, \md, s))} \, ds \cleq{r} \pvar{0}\sqrt{\log(d+1)}.$$
    The integral from $\xi$ to $\Delta$ uses the bound from \Cref{cor:pdlogcovernumbers}, the maximum will be replaced by a sum and we consider each term separately. Let $2 \leq q \leq r$, then 
    $$ \sqrt{p}d^{\frac{1}{2}-\frac{1}{p}}\int_0^{\Delta} s^{-\frac{1}{q}}\pvar{q}^{\frac{1}{q}} \, ds \cleq{r} \sqrt{p} d^{\frac{1}{2}-\frac{1}{p}}\Delta^{1-\frac{1}{q}} \pvar{q}^{\frac{1}{q}}.$$
    For $q=1$ we have
    $$  \sqrt{p}d^{\frac{1}{2}-\frac{1}{p}} \int_\xi^{\Delta} s^{-1}\pvar{1} \, ds \cleq{r}   \sqrt{p}d^{\frac{1}{2}-\frac{1}{p}}\log(\Delta/ \xi ) \pvar{1}.$$
    The diameter can be upper bounded using the Cauchy-Schwarz inequality:
    $$ \Delta = \max_{u,v \in \pdball} \sqrt{\sum_{i=1}^n (\langle T_i, u^{\otimes r} \rangle - \langle T_i, v^{\otimes r} \rangle)^2 } \leq \max_{u,v \in \pdball} \sqrt{2\sum_{i=1}^n \big(\langle T_i, u^{\otimes r} \rangle^2 + \langle T_i, v^{\otimes r} \rangle^2\big) } \leq 2 \pvar{0} $$
     Moreover, $\pvar{0} \leq d^{\frac{1}{2}-\frac{1}{p}}\pvar{1}$ holds, which can be seen by applying Hölders inequality: 
     \begin{align*}
        d^{1- \frac{2}{p}} \pvar{1}^2 
        & \geq  \sup_{u_l \in \pdball, \: 1 \leq l \leq r-1, } \sum_{k=1}^n \norm{T_k [u_1, \ldots, u_{r-1}, \cdot]}_{\frac{p}{p-1}}^2 \\
        & \geq  \sup_{u_l \in \pdball, \: 1 \leq l \leq r, } \sum_{k=1}^n (T_k [u_1, \ldots, u_r])^2 = \pvar{0}^2     
     \end{align*}

     Thus, the integral from $0$ to $\xi$ is being absorbed by the other terms. Combining all upper bounds and bounding the sum by a maximum yields:
    $$ \int_0^{\Delta} \sqrt{\log(\mathcal{N}(\pdball, \md, s))} \, ds \cleq{r} \sqrt{p}\log(d+1)d^{\frac{1}{2}-\frac{1}{p}}\pvar{1} + \sqrt{p}d^{\frac{1}{2}-\frac{1}{p}}\max_{2 \leq q \leq r} \bigg \{ \pvar{0}^{1-\frac{1}{q}} \pvar{q}^{\frac{1}{q}} \bigg \} $$
\end{proof}

\begin{rem}
    It is possible to reduce the logarithmic factor in \Cref{thm:simplifiedpdbound} by choosing the splitting of the two integrals at a slightly better place. This will only make a difference in special cases and since logarithmic factors are not our highest priority, we decided not to include this improvement. 
\end{rem}

\subsection{Extending to Asymmetric Tensors via Symmetric Embeddings}\label{subsection:symbed}

\begin{par}
    The goal of this section is to remove the diagonal-freeness as well as bounding the injective norm instead of the maximum of the symmetric form, to then prove a more general version of \Cref{thm:simplifiedpdbound}. We will do so by using the symmetric embedding of tensors, which generalizes the standard approach of Hermitian dilation for matrices. The calculations for general order $r$ and order $2$ are essentially the same, just more notational difficulties are going to arise.
\end{par}

\begin{defn}
    Let $v \in \R^{rd}$ be a vector. We will define the \emph{$q$-th piece} of $v$ (with respect to the order $r$) to be the vector $v^{[q]} \in \R^d$ that has coordinates 
    $$ v_{i}^{[q]} \coloneqq v_{(q-1)d+i}.$$
     We leave out the ``with respect to $r$" if it is clear from the context. 
\end{defn}

\begin{defn}
    Let $T \in (\R^d)^{\otimes r}$ be a tensor. We define the \emph{symmetric embedding} of $T$ to be the $rd$-dimensional order $r$ tensor $\sym(T) \in (\R^{rd})^{\otimes r}$ characterized by
    $$ \sym(T)[v_1, \ldots, v_r] \coloneqq \sum_{ \tau \in S_r} T[ v_{\tau(1)}^{[1]}, \ldots, v_{\tau(r)}^{[r]}].$$
    (Here $v_{\tau(q)}^{[q]}$ denotes the $q$-th piece of the vector $v_{\tau(q)}$.)
\end{defn}

\begin{rem}
    The symmetric embedding is both symmetric and diagonal-free, the latter follows because a canonical basis vector has at most one nonzero coordinate, thus if one inputs such a vector twice, then every term in the sum over the permutations must be zero.
\end{rem}

\begin{par}
    Spectral properties of symmetric embeddings have been studied before in~\cite{RvL13}. It is not hard to be convinced of the fact that in the case $r=2$ the symmetric embedding and the hermitian dilation coincide.

    For symmetric tensors a natural question is whether it makes more sense to study one of the following two norms:
    $$ \norminp{T} = \sup_{u_1, \ldots, u_r \in \pdball} T[u_1, \ldots, u_r]  \quad \mbox{or} \quad \norm{T}_{\sym(\mathcal{I}_p)} \coloneqq \sup_{u \in \pdball} \abs{ T[u, \ldots, u]}$$
    In the case $p=2$ these norms actually coincide, which is a fact that has been reproved several times in the literature. The earliest reference we are aware of is~\cite{Kel28}. For other pairs $(r,p)$ this is generally not true, the simplest example would be the $2 \times 2$ diagonal matrix that has a $1$ and a $-1$ on the diagonal, there for any $p>2$ the norms are not the same. The norms are however equivalent up to constants that do not depend on the dimension $d$. This is a consequence of a classical application of the polarization formula for symmetric multilinear maps, see Corollary 1.6 and Proposition 1.8 in \cite{Din99}, here we state a weaker version of the result. (It is important to note that while these results were stated for complex vector spaces, they still hold for $\R^d$ by the same method of proof.)
\end{par}

\begin{prop}[Equivalence between injective norms]
\label{prop:symnormequiv}
    Let $T \in (\R^d)^{\otimes r}$ be a symmetric tensor and let $\norm{\cdot}$ be a norm on $\R^d$. The following inequalities hold:
    $$ \sup_{\norm{u} \leq 1} \abs{T[u, \ldots, u]} \leq  \sup_{\norm{u_1}, \ldots, \norm{u_r} \leq 1} \abs{T[u_1, \ldots, u_r]} \leq \frac{r^r}{r!} \sup_{\norm{u} \leq 1} \abs{T[u, \ldots, u]}. $$
\end{prop}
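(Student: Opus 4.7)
The first inequality is immediate: restricting the right-hand supremum to the diagonal configurations $u_1 = u_2 = \cdots = u_r = u$ recovers the left-hand supremum. My plan is therefore to focus entirely on the second inequality, which I will derive from the polarization identity for symmetric multilinear forms.

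Specifically, I will first establish the following polarization formula. For $u_1, \ldots, u_r \in \R^d$ and a sign vector $\eps \in \{-1,+1\}^r$, let $v_\eps := \sum_{i=1}^r \eps_i u_i$. The identity to prove is
\begin{equation*}
T[u_1, \ldots, u_r] \;=\; \frac{1}{r!\,2^r} \sum_{\eps \in \{-1,+1\}^r} \eps_1 \eps_2 \cdots \eps_r \, T[v_\eps, \ldots, v_\eps].
\end{equation*}
To verify this, I would expand $T[v_\eps,\ldots,v_\eps]$ multilinearly as $\sum_{j_1,\ldots,j_r \in [r]} \eps_{j_1}\cdots \eps_{j_r} \, T[u_{j_1}, \ldots, u_{j_r}]$, then multiply by $\eps_1 \cdots \eps_r$ and average over $\eps$. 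The only tuples $(j_1,\ldots,j_r)$ that survive are those in which every index $i \in [r]$ appears an odd number of times among the $j_\ell$ (the rest cancel in the signed average); since the multiplicities sum to $r$, the surviving tuples are exactly the $r!$ permutations of $[r]$, each contributing $2^r$. Symmetry of $T$ collapses all $r!$ permutations into $r! \cdot T[u_1,\ldots,u_r]$, yielding the identity.

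The stated second inequality then follows in one step. Taking absolute values in the identity and applying the triangle inequality,
\begin{equation*}
|T[u_1,\ldots,u_r]| \;\leq\; \frac{1}{r!\,2^r} \sum_{\eps \in \{-1,+1\}^r} |T[v_\eps, \ldots, v_\eps]|.
\end{equation*}
By $r$-homogeneity of the diagonal form, $|T[v_\eps,\ldots,v_\eps]| \leq \norm{v_\eps}^r \cdot \sup_{\norm{u}\leq 1} |T[u,\ldots,u]|$, and the triangle inequality gives $\norm{v_\eps} \leq \sum_{i=1}^r \norm{u_i} \leq r$ whenever all $\norm{u_i} \leq 1$. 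Substituting these bounds and using $|\{-1,+1\}^r| = 2^r$ yields $|T[u_1,\ldots,u_r]| \leq (r^r/r!) \sup_{\norm{u}\leq 1}|T[u,\ldots,u]|$, and taking the supremum over the $u_i$ completes the proof.

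I do not expect any serious obstacle: the polarization identity is the entire content of the argument, and once it is in hand the stated bound is an immediate consequence of elementary norm estimates. The only mild subtlety is the combinatorial bookkeeping in the sign-averaged sum that isolates the permutation contributions, which is presumably why the authors simply defer this to the citation of \cite{Din99}.
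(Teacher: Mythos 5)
Your proof is correct, and it is exactly the classical polarization argument that the paper defers to via the citation of Dineen \cite{Din99} (Corollary 1.6, Proposition 1.8) rather than writing out. The polarization identity, the sign-averaging computation that isolates the permutation terms, and the subsequent estimate $\|v_\eps\|\le r$ combined with $r$-homogeneity all check out and deliver precisely the stated constant $r^r/r!$.
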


    The symmetric norm $\norm{\cdot}_{\sym(\mathcal{I}_p)}$ of $\sym(T)$ is connected to the injective norm of $T$ as the next lemma shows. This makes \Cref{prop:symnormequiv} for our purposes essentially irrelevant.

\begin{lemma}\label{lemma:symembedsymmax}
    Let $T \in (\R^d)^{\otimes r}$ be a tensor. We have
    $$ \sup_{u \in \pdball} \langle \sym(T), u^{\otimes r} \rangle = \norm{\sym(T)}_{\sym(\mathcal{I}_p)}  =  \frac{r!}{r^{\frac{r}{p}}}\norminp{T}.$$
\end{lemma}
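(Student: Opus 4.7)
The plan is to reduce both equalities to a direct computation via the definition of $\sym(T)$ and a maximization over the pieces of $u$. The main step is step~3, where AM-GM on the $\ell_p$ constraint produces the clean factor $r^{-r/p}$.

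First, I would simplify $\sym(T)[u,\ldots,u]$. By definition,
\begin{equation*}
    \sym(T)[u,\ldots,u] = \sum_{\tau \in S_r} T\!\left[u^{[1]}, u^{[2]}, \ldots, u^{[r]}\right],
\end{equation*}
because the $q$-th argument of $T$ always extracts the $q$-th piece of the vector sitting in the $q$-th slot of $\sym(T)$, and that slot holds $u$ regardless of $\tau$. The sum is therefore independent of $\tau$ and equals $r!\, T[u^{[1]}, \ldots, u^{[r]}]$. Every subsequent reduction will work with this simplified quantity.

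Next, I would dispose of the absolute value in $\norm{\sym(T)}_{\sym(\I_p)} = \sup_{u \in \mathbb{B}_p^{rd}} \abs{\sym(T)[u,\ldots,u]}$. Given any $u \in \mathbb{B}_p^{rd}$, consider the vector $\tilde u \in \mathbb{B}_p^{rd}$ obtained by flipping the sign of the first piece only: $\tilde u^{[1]} = -u^{[1]}$ and $\tilde u^{[q]} = u^{[q]}$ for $q \geq 2$. Clearly $\norm{\tilde u}_p = \norm{u}_p$, so $\tilde u \in \mathbb{B}_p^{rd}$; by the simplification above and multilinearity of $T$,
\begin{equation*}
    \sym(T)[\tilde u, \ldots, \tilde u] = r!\, T[-u^{[1]}, u^{[2]}, \ldots, u^{[r]}] = -\sym(T)[u, \ldots, u].
\end{equation*}
Thus the two suprema over $\mathbb{B}_p^{rd}$ — with and without absolute value — coincide, giving the first equality.

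It remains to compute the common value. Using the simplification and writing $a_q := \norm{u^{[q]}}_p$, we have
\begin{equation*}
    \sup_{u \in \mathbb{B}_p^{rd}} \sym(T)[u,\ldots,u] = r! \sup_{u \in \mathbb{B}_p^{rd}} T\!\left[u^{[1]}, \ldots, u^{[r]}\right] \leq r! \sup_{\substack{a_1, \ldots, a_r \geq 0 \\ a_1^p + \cdots + a_r^p \leq 1}} \prod_{q=1}^r a_q \cdot \norminp{T},
\end{equation*}
where the inequality factors out the norms $a_q$ and bounds the remaining multilinear evaluation at unit vectors by $\norminp{T}$. The constrained maximization is handled by AM-GM on $a_q^p$: subject to $\sum_q a_q^p \leq 1$, the product $\prod_q a_q$ is maximized at $a_q = r^{-1/p}$, with maximum value $r^{-r/p}$. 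This yields the upper bound $\frac{r!}{r^{r/p}}\norminp{T}$. For the matching lower bound, pick $z_1, \ldots, z_r \in \mathbb{B}_p^d$ nearly attaining $\norminp{T}$ and set $u^{[q]} = r^{-1/p} z_q$; then $\norm{u}_p^p = r^{-1} \sum_q \norm{z_q}_p^p \leq 1$ and the quantity $r!\, T[u^{[1]}, \ldots, u^{[r]}]$ is within $\eps$ of $\frac{r!}{r^{r/p}}\norminp{T}$. The only step that is not purely formal is the AM-GM maximization, and it is elementary; no serious obstacle is anticipated.
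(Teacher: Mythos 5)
Your proof is correct and follows essentially the same route as the paper's: reduce $\sym(T)[u,\ldots,u]$ to $r!\,T[u^{[1]},\ldots,u^{[r]}]$, bound the product of piece-norms by AM-GM under the constraint $\sum_q \|u^{[q]}\|_p^p \le 1$, and exhibit equality by scaling optimal directions for $T$ by $r^{-1/p}$ in each piece. The one small stylistic difference is that you dispose of the absolute value explicitly by flipping the sign of a single piece; the paper handles that equality implicitly by showing the extremizer achieves the bound without the absolute value. Both are valid, and your version makes the first equality of the lemma a bit more transparent.
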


\begin{proof}
    Let $u \in \mathbb{B}_p^{rd}$, then
    $$ \abs{\sym(T)[u, \ldots, u]} \leq \sum_{ \tau \in S_r} \abs{T[ u^{[1]}, \ldots, u^{[r]}]} \leq r! \norm{u^{[1]}}_p \cdots \norm{u^{[r]}}_p \norminp{T}.$$
    By the AM-GM inequality we have
    $$ r! \left( \big\|u^{[1]}\big\|_p^p \cdots \big\|u^{[r]}\big\|_p^p \right)^\frac{r}{rp}\norminp{T} \leq r! \bigg( \frac{\norm{u^{[1]}}_p^p + \cdots + \norm{u^{[r]}}_p^p}{r} \bigg)^\frac{r}{p}\norminp{T}.$$
    Therefore, since $\norm{u^{[1]}}_p^p + \cdots + \norm{u^{[r]}}_p^p = \norm{u}_p^p \leq 1$, we get
    $$ \abs{\sym(T)[u, \ldots, u]} \leq \frac{r!}{r^{\frac{r}{p}}}\norminp{T}.$$
    To finish the proof of the statement we need to show that there exists $u \in \mathbb{B}_p^{rd}$, for which this upper bound is an equality. Let $v_1, \ldots, v_r \in \pdball$ be vectors, such that 
    $$ \norminp{T} = T[v_1, \ldots, v_r].$$
    These vectors must all have $p$-norm equal to $1$ by maximality. Now for $1 \leq q \leq r$, we define $u \in \mathbb{B}_p^{rd}$ piece-wise as follows:
    $$ u^{[q]} \coloneqq r^{-1/p}v_q .$$
 The factor $r^{-1/p}$ ensures that $\norm{u}_p = 1$ holds. Multilinearity shows that this vector admits the upper bound.
    $$ \sym(T)[u, \ldots, u] = \sum_{ \tau \in S_r} T[ u^{[1]}, \ldots, u^{[r]}] = \frac{r!}{r^{\frac{r}{p}}}\norminp{T} .$$
\end{proof}

\begin{par}
    The connection given by \Cref{lemma:symembedsymmax} allows us to use \Cref{thm:simplifiedpdbound} to provide bounds for the norm of arbitrary tensors. The variance parameters would still depend on the symmetric embeddings of the tensors though. They can be expressed more conveniently in terms of the initial tensors when we assume symmetry as \Cref{lemma:sympvar} will show. This Lemma may also be extendable to asymmetric tensors by considering more parameters that appear due to the asymmetry.
\end{par}

\begin{lemma}\label{lemma:sympvar}
    Let $T_1, \ldots T_n \in (\R^d)^{\otimes r}$ be symmetric tensors. For all $0 \leq q \leq r$ we have
    $$ \Big\|\sum_{k=1}^n T_k \star_q T_k\Big\|_{\I_p}^{1/2} \cgeq{r} \Big\|\sum_{k=1}^n \sym(T_k) \star_q \sym(T_k)\Big\|_{\I_p}^{1/2}. $$
\end{lemma}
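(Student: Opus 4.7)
The plan is to use Lemma~3.1 to rewrite both sides as suprema of sums of squared Frobenius norms of contractions, and then to unpack the definition of $\sym$ to show that each such contraction on the right is controlled by analogous contractions of the original tensors $T_k$, with a combinatorial factor depending only on $r$.

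By Lemma~3.1 applied in dimension $rd$ to the symmetric tensors $\sym(T_k)$, the right-hand side equals
$$ \sup_{w_1, \ldots, w_{r-q} \in \mathbb{B}_p^{rd}} \sum_{k=1}^n \normf{\sym(T_k)[w_1, \ldots, w_{r-q}, \cdot, \ldots, \cdot]}^2, $$
and I would compute the Frobenius norm by expanding against the standard basis vectors $e_j \in \R^{rd}$. Writing each basis vector as $e_{(s,i)}$ with $s \in [r]$, $i \in [d]$, the key observation is that $e_{(s,i)}^{[l]} = \delta_{l,s}\,e_i$, so only the $s$-th piece of $e_{(s,i)}$ is nonzero. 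Unpacking the definition
$$ \sym(T_k)[w_1, \ldots, w_{r-q}, e_{(s_1,i_1)}, \ldots, e_{(s_q,i_q)}] = \sum_{\tau \in S_r} T_k\bigl[v_{\tau(1)}^{[1]}, \ldots, v_{\tau(r)}^{[r]}\bigr], $$
one checks that the summand vanishes unless $s_1, \ldots, s_q$ are pairwise distinct and $\tau$ satisfies $\tau(s_c) = r-q+c$ for each $c \in [q]$. When this holds, $\tau$ restricted to $A := [r] \setminus \{s_1, \ldots, s_q\}$ is an arbitrary bijection $\sigma\colon A \to [r-q]$, yielding
$$ \sym(T_k)[w_1, \ldots, w_{r-q}, e_{(s_1,i_1)}, \ldots, e_{(s_q,i_q)}] = \sum_{\sigma\colon A \to [r-q]} T_k\bigl[e_{i_1}, \ldots, e_{i_q}, \{w_{\sigma(l)}^{[l]}\}_{l \in A}\bigr], $$
where I used the symmetry of $T_k$ to collect the $e_{i_c}$-slots together.

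Next I would apply the elementary inequality $(a_1 + \cdots + a_N)^2 \leq N \sum_m a_m^2$ with $N = (r-q)!$, sum over $i_1, \ldots, i_q \in [d]$ (which recovers $\normf{T_k[\,\cdot,\ldots,\cdot, w_{\sigma(l_1)}^{[l_1]}, \ldots, w_{\sigma(l_{r-q})}^{[l_{r-q}]}]}^2$ for each $\sigma$), and finally sum over the $r!/(r-q)!$ ordered distinct tuples $(s_1, \ldots, s_q)$ and the $(r-q)!$ bijections $\sigma$. Since $w_m \in \mathbb{B}_p^{rd}$ forces $\|w_m^{[l]}\|_p^p \leq \sum_{l'} \|w_m^{[l']}\|_p^p \leq 1$, each piece $w_m^{[l]}$ lies in $\pdball$. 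Taking the supremum and applying Lemma~3.1 again (this time to the original tensors $T_k$) bounds each resulting term by $\|\sum_k T_k \star_q T_k\|_{\I_p}$, giving
$$ \Big\|\sum_{k=1}^n \sym(T_k) \star_q \sym(T_k)\Big\|_{\I_p} \leq r!\cdot(r-q)!\cdot \Big\|\sum_{k=1}^n T_k \star_q T_k\Big\|_{\I_p}, $$
and the lemma follows after taking square roots.

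The only real obstacle is the combinatorial bookkeeping: keeping track of which permutations $\tau$ survive the constraint imposed by plugging in standard basis vectors, and identifying the remaining freedom as bijections $\sigma\colon A \to [r-q]$. Once this is done carefully, each of the subsequent inequalities (Cauchy--Schwarz on the permutation sum, the piecewise bound $\|w_m^{[l]}\|_p \leq 1$, and the final application of Lemma~3.1) is routine, and no dimension-dependent loss is incurred.
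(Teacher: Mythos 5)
Your proposal is correct and follows essentially the same route as the paper: both proofs pass through Lemma~\ref{lemma:variances} to express both variance parameters as suprema of sums of squared Frobenius norms, then expand $\sym(T_k)$ against standard basis vectors of $\R^{rd}$, apply the Cauchy--Schwarz inequality to the resulting permutation sum, and finally use that pieces of a vector in $\mathbb{B}_p^{rd}$ lie in $\pdball$. The only difference is bookkeeping: you identify \emph{before} applying Cauchy--Schwarz that for a fixed tuple $(s_1,\ldots,s_q)$ only the $(r-q)!$ permutations $\tau$ with $\tau(s_c)=r-q+c$ survive (yielding a constant of $r!\,(r-q)!$), whereas the paper applies Cauchy--Schwarz over all of $S_r$ first and only afterwards lets the basis vectors kill terms (yielding $(r!)^2$). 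Since the lemma only claims $\gtrsim_r$, both constants are fine.
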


\begin{proof}
    We first analyze the Frobenius norm of $\sym(T_k)$.
    Let $u_1, \ldots u_{r-q} \in \mathbb{B}_p^{rd}$
    $$ \normf{\sym(T_k)[u_1, \ldots, u_{r-q}, \cdot, \ldots, \cdot]}^2 = \sum_{m_1, \ldots m_q = 1}^{rd} \sym(T_k)[u_1, \ldots, u_{r-q}, e_{m_1}, \ldots, e_{m_q}]^2 $$
    Since $T_k$ is symmetric, we can write
    $$ \sym(T_k)[u_1, \ldots, u_{r-q}, e_{m_1}, \ldots, e_{m_q}] = \sum_{\tau \in S_{r}}T_k[u_1^{[\tau(1)]}, \ldots, u_{r-q}^{[\tau(r-q)]}, e_{m_1}^{[\tau(r-q+1)]}, \ldots, e_{m_q}^{[\tau(r)]}] .$$
    Applying the Cauchy-Schwarz inequality to this sum yields
    $$ \sym(T_k)[u_1, \ldots, u_{r-q}, e_{m_1}, \ldots, e_{m_q}]^2 \leq \hspace{-0.5mm} r! \hspace{-1.5mm} \sum_{\tau \in S_{r}}T_k[u_1^{[\tau(1)]}, \ldots, u_{r-q}^{[\tau(r-q)]}, e_{m_1}^{[\tau(r-q+1)]}, \ldots, e_{m_q}^{[\tau(r)]}]^2.$$
    This bound will be summed over all possible tuples $(m_1, \ldots, m_q)$. Given numbers $1 \leq n_1, \ldots, n_q \leq d$ and a fixed permutation $\tau \in S_r$, there exists at most $1$ tuple $(m_1, \ldots, m_q)$ in the summation, such that
    $$ (e_{m_1}^{[\tau(r-q+1)]}, \ldots, e_{m_q}^{[\tau(r)]}) = (e_{n_1}, \ldots, e_{n_q}) $$
    holds, since the standard basis vectors have precisely one nonzero entry, which determines the tuple $(m_1, \ldots, m_q)$ uniquely. For the other tuples $(m_1, \ldots, m_d)$ that do not correspond to any choice of $1 \leq n_1, \ldots, n_q \leq d$ there must therefore exist some zero vector among $ \{ e_{m_1}^{[\tau(r-q+1)]}, \ldots, e_{m_q}^{[\tau(r)]} \}$. Thus, the following equalities holds:
    \begin{equation*}
        \begin{split}
            &\sum_{m_1, \ldots m_q = 1}^{rd} T_k[u_1^{[\tau(1)]}, \ldots, u_{r-q}^{[\tau(r-q)]}, e_{m_1}^{[\tau(r-q+1)]}, \ldots, e_{m_q}^{[\tau(r)]}]^2
            \\
            = &\sum_{n_1, \ldots n_q = 1}^{d} T_k[u_1^{[\tau(1)]}, \ldots, u_{r-q}^{[\tau(r-q)]}, e_{n_1}, \ldots, e_{n_q}]^2
            \\
            = &\normf{ T_k[u_{\tau(1)}^{[1]}, \ldots , u_{\tau(r-q)}^{[r-q]}, \cdot, \ldots, \cdot ]}^2
        \end{split}
    \end{equation*}
    Summing this over all permutations yields:
    $$ \sum_{m_1, \ldots m_q = 1}^{rd} \sym(T_k)[u_1, \ldots, u_{r-q}, e_{m_1}, \ldots, e_{m_q}]^2 \leq r!\sum_{\tau \in S_{r}} \normf{ T_k[u_{\tau(1)}^{[1]}, \ldots , u_{\tau(r-q)}^{[r-q]}, \cdot, \ldots, \cdot ]}^2 .$$
    We sum this bound over $k$.
    $$ \sum_{k=1}^n \normf{\sym(T_k)[u_1, \ldots, u_{r-q}, \cdot, \ldots, \cdot]}^2 \leq r!\sum_{\tau \in S_{r}} \sum_{k=1}^n \normf{ T_k[u_{\tau(1)}^{[1]}, \ldots , u_{\tau(r-q)}^{[r-q]}, \cdot, \ldots, \cdot ]}^2 .$$
    Taking the supremum over $u_1, \ldots, u_{r-q} \in \mathbb{B}_p^{rd}$ and exchanging the sum over the permutations with the supremum results in the inequality
    $$ \Big\|\sum_{k=1}^n \sym(T_k) \star_q \sym(T_k)\Big\|_{\I_p} \leq r!\sum_{\tau \in S_{r}} \sup_{u_1, \ldots, u_{r-q} \in \mathbb{B}_p^{rd}} \sum_{k=1}^n \normf{ T_k[u_{\tau(1)}^{[1]}, \ldots , u_{\tau(r-q)}^{[r-q]}, \cdot, \ldots, \cdot ]}^2.$$
    Since pieces of vectors do not have bigger $p$-norm than the original vector, we have that $u_{\tau(1)}^{[1]}, \ldots , u_{\tau(r-q)}^{[r-q]} \in \pdball$, and therefore
    $$ \sup_{u_1, \ldots, u_{r-q} \in \mathbb{B}_p^{rd}} \sum_{k=1}^n \normf{ T_k[u_{\tau(1)}^{[1]}, \ldots , u_{\tau(r-q)}^{[r-q]}, \cdot, \ldots, \cdot ]}^2 \leq \Big\|\sum_{k=1}^n T_k \star_q T_k\Big\|_{\I_p}. $$
    Hence, we have shown
    $$ \Big\|\sum_{k=1}^n \sym(T_k) \star_q \sym(T_k)\Big\|_{\I_p} \leq (r!)^2 \Big\|\sum_{k=1}^n T_k \star_q T_k\Big\|_{\I_p}.$$
\end{proof}

\begin{par}
    The link between the variance parameters of the embeddings and the original tensors was the last ingredient needed to prove our main result \Cref{thm:masterthm} in its full generality. We emphasize that the geometric part of the proof was already done in the previous two subsections.
\end{par}

\begin{thm}\label{thm:sharpestpnck}
    Let $T_1, \ldots, T_n \in (\mathbb{R}^d)^{\otimes r}$ be symmetric tensors and $2 \leq p < \infty$.
    Then
    $$ \E \bigg[ \Big\| \sum_{k=1}^n \rv{g}_k  T_k \Big\|_{\I_p} \bigg] \cleq{r} \sqrt{p}d^{\frac{1}{2}-\frac{1}{p}}\left(\log(d+1)\pvar{1} + \max_{2 \leq q \leq r} \left \{ \pvar{0}^{1-\frac{1}{q}} \pvar{q}^{\frac{1}{q}} \right \} \right).$$
\end{thm}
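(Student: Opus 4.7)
The plan is to reduce the statement directly to \Cref{thm:simplifiedpdbound}, which already handles the symmetric diagonal-free case, by passing to the symmetric embedding. Define $S_k \coloneqq \sym(T_k) \in (\R^{rd})^{\otimes r}$. Since the embedding map is linear, $\sum_k \rv{g}_k S_k = \sym\!\left(\sum_k \rv{g}_k T_k\right)$, and the remark following the definition of $\sym$ tells us that each $S_k$ is simultaneously symmetric and diagonal-free, so the hypotheses of \Cref{thm:simplifiedpdbound} are satisfied in ambient dimension $D \coloneqq rd$.

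Next I would transfer between the injective norm of the original series and the symmetric form of the embedded series via \Cref{lemma:symembedsymmax}, which gives the exact identity
$$\E \Big\|\textstyle\sum_{k=1}^n \rv{g}_k T_k\Big\|_{\I_p} \;=\; \frac{r^{r/p}}{r!}\, \E \sup_{u \in \mathbb{B}_p^{rd}} \Big\langle \textstyle\sum_{k=1}^n \rv{g}_k S_k,\, u^{\otimes r}\Big\rangle \;\cleq{r}\; \E \sup_{u \in \mathbb{B}_p^{rd}} \sum_{k=1}^n \rv{g}_k \langle S_k, u^{\otimes r}\rangle,$$
where the prefactor $r^{r/p}/r!$ is absorbed into the $\lesssim_r$ since $p\ge 2$ makes it bounded by a constant depending only on $r$. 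Then applying \Cref{thm:simplifiedpdbound} in dimension $D = rd$ to $(S_1, \ldots, S_n)$ yields
$$\E \sup_{u \in \mathbb{B}_p^{rd}} \sum_{k=1}^n \rv{g}_k \langle S_k, u^{\otimes r}\rangle \;\cleq{r}\; \sqrt{p}\, (rd)^{\frac12 - \frac1p}\left( \log(rd+1)\,\widetilde{\sigma}_1 + \max_{2 \le q \le r} \widetilde{\sigma}_0^{\,1-\frac1q}\widetilde{\sigma}_q^{\,\frac1q}\right),$$
where $\widetilde{\sigma}_q$ denotes the variance parameter $\sigma_{(q,\I_p)}$ computed for the tensors $S_1, \ldots, S_n$.

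Finally, the factor $(rd)^{\frac12 - \frac1p}$ and $\log(rd+1)$ are each comparable to $d^{\frac12 - \frac1p}$ and $\log(d+1)$ respectively, up to constants depending only on $r$. \Cref{lemma:sympvar} (applied to each $0 \le q \le r$) gives $\widetilde{\sigma}_q \lesssim_r \pvar{q}$, which replaces the embedded variance parameters by those of the original tensors $T_k$. Chaining these three steps delivers exactly the bound in the statement.

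The core work has already been carried out: the geometric covering-number argument in \Cref{thm:simplifiedpdbound} and the combinatorial identities that make the embedding well-behaved (\Cref{lemma:symembedsymmax} and \Cref{lemma:sympvar}). The only real issue in this final proof is careful bookkeeping: verifying that the linearity of $\sym$ commutes past the Gaussian series, that the dimension inflation $d \mapsto rd$ and the polarization prefactor $r^{r/p}/r!$ contribute only $r$-dependent constants, and that the variance parameters translate cleanly across the embedding. No new analytic ingredient is needed beyond what is already established.
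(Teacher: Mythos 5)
Your proposal is correct and follows essentially the same route as the paper: linearize past the Gaussian series via $\sym$, invoke \Cref{lemma:symembedsymmax} to pass from $\norminp{\cdot}$ to the symmetric form of the embedded tensors, apply \Cref{thm:simplifiedpdbound} to the (symmetric, diagonal-free) embeddings, and finally use \Cref{lemma:sympvar} to replace the embedded variance parameters by those of the original tensors. The paper's write-up elides the $d \mapsto rd$ dimension inflation and the $r^{r/p}/r!$ polarization prefactor as harmless $r$-dependent constants, which you have correctly made explicit.
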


\begin{proof}
    Using \Cref{lemma:symembedsymmax} and linearity of the symmetric embedding we have
    $$ \E \bigg[ \Big\| \sum_{k=1}^n \rv{g}_k  T_k \Big\|_{\I_p} \bigg] \cleq{r} \E \bigg[ \sup_{u \in \pdball} \Big \langle \sum_{k=1}^n \rv{g}_k \sym(T_k), u^{\otimes r} \Big \rangle \bigg] .$$
    The tensors $\sym(T_k)$ are diagonal-free and symmetric. Hence, we can apply \Cref{thm:simplifiedpdbound}. To avoid confusion between the different variance parameters in this context we define
    $$ \pvar{q}^{\sym} \coloneqq \Big\|\sum_{k=1}^n \sym(T_k) \star_q \sym(T_k)\Big\|_{\I_p}^{1/2} \quad \qquad \pvar{q} \coloneqq \Big\|\sum_{k=1}^n T_k \star_q T_k\Big\|_{\I_p}^{1/2}$$
    for $0 \leq q \leq r$. \Cref{thm:simplifiedpdbound} shows:
    $$ \E \hspace{-0.5mm} \bigg[ \sup_{u \in \pdball} \Big \langle \sum_{k=1}^n \rv{g}_k \sym(T_k), u^{\otimes r} \hspace{-0.5mm} \Big \rangle \bigg] \hspace{-0.5mm} \cleq{r} \hspace{-0.5mm}\sqrt{p}d^{\frac{1}{2}-\frac{1}{p}} \hspace{-0.5mm}\left(\log(d+1)\pvar{1}^{\sym} \hspace{-0.5mm} + \hspace{-0.5mm} \max_{2 \leq q \leq r} \hspace{-0.5mm} \left \{ \pvar{0}^{\sym} \hspace{-0.1mm}^{1-\frac{1}{q}} \pvar{q}^{\sym} \hspace{-0.1mm}^{\frac{1}{q}} \right \} \right ) .$$
    For any $0 \leq q \leq r$ the variance parameter $\pvar{q}^{\sym}$ is upper bounded up to order-dependent factor by $\pvar{q}$, which was proven in \Cref{lemma:sympvar}. Thus,
    $$ \E \bigg[ \sup_{u \in \pdball} \Big \langle \sum_{k=1}^n \rv{g}_k \sym(T_k), u^{\otimes r} \Big \rangle \bigg] \cleq{r} \sqrt{p}d^{\frac{1}{2}-\frac{1}{p}}\left(\log(d+1)\pvar{1} + \max_{2 \leq q \leq r} \left \{ \pvar{0}^{1-\frac{1}{q}} \pvar{q}^{\frac{1}{q}} \right \} \right) .$$
\end{proof}

\begin{par}
    The bound given in \Cref{thm:sharpestpnck} is the sharpest one we are able to provide using our techniques (for general unspecified $T_k$). The next theorem will present the slightly more transparent \Cref{cor:boundtype2constant} involving the sum of squared injective norms (the type-2 variance). This result may yield significantly worse estimates than \Cref{thm:sharpestpnck}, but it illustrates well the worst-case performance of our bounds.
\end{par}

\begin{thm}\label{thm:typetwopnck}
    Let $T_1, \ldots, T_n \in (\mathbb{R}^d)^{\otimes r}$ be symmetric tensors and $2 \leq p < \infty$, we have
    $$ \E \bigg[ \Big\| \sum_{k=1}^n \rv{g}_k  T_k \Big\|_{\I_p} \bigg] \cleq{r} \sqrt{p}\log(d+1)d^{\frac{1}{2}-\min \{\frac{1}{p}, \frac{1}{2r}\}} \typetwopvar.$$
\end{thm}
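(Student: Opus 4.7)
The plan is to combine the already-established Theorem~\ref{thm:sharpestpnck} with the variance comparison in Corollary~\ref{cor:type2var}, and then simplify the resulting exponent of $d$ by splitting into two cases depending on whether $p \leq 2r$ or $p > 2r$. All of the geometric work has been done in the previous subsections; what remains is essentially arithmetic on exponents.

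First I would invoke Corollary~\ref{cor:type2var} to write $\pvar{q} \lesssim_r d^{\max\{q/p - 1/2, 0\}} \typetwopvar$ for every $0 \leq q \leq r$. In particular $\pvar{0} \lesssim_r \typetwopvar$ and, since $p \geq 2$, also $\pvar{1} \lesssim_r \typetwopvar$. Plugging these into the product appearing in Theorem~\ref{thm:sharpestpnck} gives
$$ \pvar{0}^{1 - \frac{1}{q}} \pvar{q}^{\frac{1}{q}} \cleq{r} d^{\frac{1}{q}\max\{q/p - 1/2, 0\}} \typetwopvar = d^{\max\{1/p - 1/(2q), 0\}} \typetwopvar. $$
The exponent $\max\{1/p - 1/(2q),0\}$ is nondecreasing in $q$, so the maximum over $2 \leq q \leq r$ is attained at $q = r$ and equals $d^{\max\{1/p - 1/(2r), 0\}} \typetwopvar$.

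Substituting these bounds into Theorem~\ref{thm:sharpestpnck} and absorbing the $\log(d+1)\pvar{1}$ term (which has a strictly smaller $d$-exponent) into the maximum, I obtain
$$ \E \Big\| \sum_{k=1}^n \rv{g}_k T_k \Big\|_{\I_p} \cleq{r} \sqrt{p}\,\log(d+1)\, d^{\frac{1}{2} - \frac{1}{p} + \max\{1/p - 1/(2r),\, 0\}} \typetwopvar. $$
It remains to check that the exponent equals $\frac{1}{2} - \min\{1/p, 1/(2r)\}$. When $p \leq 2r$ we have $1/p \geq 1/(2r)$, so the $\max$ is $1/p - 1/(2r)$ and the exponent becomes $\frac{1}{2} - \frac{1}{2r} = \frac{1}{2} - \min\{1/p, 1/(2r)\}$. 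When $p > 2r$ the $\max$ is $0$ and the exponent is $\frac{1}{2} - \frac{1}{p} = \frac{1}{2} - \min\{1/p, 1/(2r)\}$. Either way the claimed bound follows.

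There is no real obstacle here: this theorem is just a packaging of Theorem~\ref{thm:sharpestpnck} into a single, easier-to-parse dimensional exponent. The only place requiring minor care is checking that the $\log(d+1)\pvar{1}$ summand never dominates the $\max_q$ summand after the substitution, which follows because the exponent $\max\{1/p - 1/(2q),0\}$ is nonnegative for all $q \geq 2$.
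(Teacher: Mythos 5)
Your proof is correct and follows the paper's route: feed Corollary~\ref{cor:type2var} into Theorem~\ref{thm:sharpestpnck}, note the maximum over $q$ of $\max\{1/p-1/(2q),0\}$ is attained at $q=r$, and recombine exponents via $\frac12-\frac1p+\max\{\frac1p-\frac1{2q},0\}=\frac12-\min\{\frac1p,\frac1{2q}\}$. One minor imprecision worth flagging: the $\log(d+1)\pvar{1}$ term does not have a \emph{strictly} smaller $d$-exponent when $p\geq 2r$ (the exponents then coincide at $\frac12-\frac1p$); the absorption still goes through because the target bound retains the $\log(d+1)$ factor, which makes both summands at most a constant times the claimed expression regardless of which one is larger.
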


\begin{proof}
    \Cref{cor:type2var} provides the estimates
    $$ \pvar{q}^{\frac{1}{q}} \cleq{r} d^{\max \{\frac{1}{p}- \frac{1}{2q},0 \}} \typetwopvar^{\frac{1}{q}}$$
    for $1 \leq q \leq r$. Combining these inequalities with $\pvar{0} \leq \typetwopvar$ and \Cref{thm:sharpestpnck} yields
    $$  \E \bigg[ \Big\| \sum_{k=1}^n \rv{g}_k  T_k \Big\|_{\I_p}\bigg] \cleq{r} \sqrt{p}d^{\frac{1}{2}-\frac{1}{p}}\left(\log(d+1)\typetwopvar + \max_{2 \leq q \leq r} \left \{  d^{\max \{\frac{1}{p}- \frac{1}{2q},0 \}} \typetwopvar \right \} \right).$$
    The result then follows from the fact 
    $$\frac{1}{2}-\frac{1}{p} + \max \Big \{\frac{1}{p}- \frac{1}{2q},0 \Big \} = \frac{1}{2}-\min \Big \{\frac{1}{p}, \frac{1}{2q} \Big \}.$$
\end{proof}

\subsection{Proof of Lower Bound in \Cref{cor:boundtype2constant}} \label{subsec:proof_lower_bound}

\begin{par}
    In this subsection, we prove the lower bound in \Cref{cor:boundtype2constant}. In particular, we construct an example of tensors, for which it is attained.
\end{par}

\begin{lemma}
    There exist tensors $T_1, \ldots, T_d \in (\mathbb{R}^d)^{\otimes r}$, such that the following holds:
    $$ \E \bigg[ \Big\| \sum_{k=1}^n \rv{g}_k  T_k \Big\|_{\I_p} \bigg] \cgeq{r,p} d^{\frac{1}{2}-\frac1p} \typetwopvar.$$
\end{lemma}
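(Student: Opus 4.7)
The plan is to exhibit a rank-one family of tensors for which the Gaussian series collapses to a single random vector in $\R^d$ measured in the dual $\ell_{p'}$ norm, whose expectation has exactly the right order of magnitude. Concretely, I would set
\[ T_k \coloneqq e_k \otimes e_1^{\otimes(r-1)} \in (\R^d)^{\otimes r}, \qquad k = 1, \ldots, d, \]
where $e_1, \ldots, e_d$ is the standard basis of $\R^d$ and $p' = p/(p-1)$ denotes the Hölder conjugate of $p$.

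First I would record the rank-one identity: for any $a_1, \ldots, a_r \in \R^d$, direct factorisation of the supremum in~\eqref{eq:injnormdef} together with Hölder duality gives
\[ \norminp{a_1 \otimes \cdots \otimes a_r} = \prod_{i=1}^r \norm{a_i}_{p'}. \]
Applied to $T_k$ this yields $\norminp{T_k} = 1$, and hence $\typetwopvar = \sqrt{d}$. By linearity the Gaussian series equals $\sum_{k=1}^d \rv{g}_k T_k = \rv{g} \otimes e_1^{\otimes(r-1)}$, where $\rv{g} = (\rv{g}_1, \ldots, \rv{g}_d)$ is a standard Gaussian vector in $\R^d$; applying the rank-one identity once more gives $\norminp{\sum_k \rv{g}_k T_k} = \norm{\rv{g}}_{p'}$.

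It then remains to bound $\E \norm{\rv{g}}_{p'}$ from below. Since $\norm{\rv{g}}_{p'}^{p'} = \sum_{i=1}^d |\rv{g}_i|^{p'}$ has mean $\ceq{p} d$ and variance $\cleq{p} d$, the Paley--Zygmund inequality shows $\Prob\bigl[\norm{\rv{g}}_{p'}^{p'} \cgeq{p} d\bigr] \gtrsim 1$, which gives $\E \norm{\rv{g}}_{p'} \cgeq{p} d^{1/p'} = d^{1-1/p}$. Combining the three steps,
\[ \E \norminp{\textstyle\sum_{k=1}^d \rv{g}_k T_k} \cgeq{p} d^{1-1/p} = d^{1/2 - 1/p} \cdot \sqrt{d} = d^{1/2 - 1/p} \typetwopvar, \]
as required. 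There is no substantive obstacle in this argument: once one guesses to ``concentrate'' the variation in a single tensor mode, the problem reduces to the standard moment lower bound for $\norm{\rv{g}}_{p'}$ of an i.i.d.\ Gaussian vector.
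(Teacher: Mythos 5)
Your construction is the same as the paper's (you use $T_k = e_k \otimes e_1^{\otimes(r-1)}$, the paper uses the reordering $e_1^{\otimes(r-1)} \otimes e_k$, which is irrelevant to the injective norm), and both proofs hinge on the observation that the random series collapses to a single random mode $\rv{g}$. The only divergence is in the final lower bound: the paper first passes to Rademacher signs $\boldsymbol{\eps}_k$ via the general Gaussian-dominates-Rademacher comparison and then gets $\bigl\|\sum_k \boldsymbol{\eps}_k T_k\bigr\|_{\I_p} \geq d^{1-1/p}$ \emph{deterministically} by testing against the vector $v_r = d^{-1/p}(\boldsymbol{\eps}_1,\ldots,\boldsymbol{\eps}_d)\in\pdball$, so no concentration inequality is needed; you instead work directly with the Gaussian vector, recognize $\norminp{\sum_k \rv{g}_k T_k} = \norm{\rv{g}}_{p'}$, and invoke Paley--Zygmund on $\norm{\rv{g}}_{p'}^{p'}$. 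Both are correct. Your route avoids the Rademacher comparison and is conceptually transparent, but if you prefer to skip Paley--Zygmund you could reuse the paper's test-vector trick directly on the Gaussian: $\norm{\rv{g}}_{p'} = \sup_{\|v\|_p\le 1}\langle\rv{g},v\rangle \geq \langle\rv{g}, d^{-1/p}\operatorname{sgn}(\rv{g})\rangle = d^{-1/p}\norm{\rv{g}}_1$, whose expectation is $\sqrt{2/\pi}\, d^{1-1/p}$ by one-dimensional computation.
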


\begin{proof}
    We define the entries of the tensor $T_k$ by $(T_k)_{1, \ldots, 1, j} =1$ for all $1 \leq j \leq d$ and all other entries being zero. Let $\eps_1, \ldots, \eps_d \in \{-1,1\}$, the sum
    $\sum_{k=1}^d \eps_k T_k $ is essentially a column vector having $\eps_k$ as $k$-th entry. Pick vectors $v_q = e_1$ for $1 \leq q \leq r-1$ and define the last vector entry-wise $(v_r)_k = \eps_k d^{-\frac1p}$. Then $v_1, \ldots, v_r \in \pdball$ and
    $$ \Big\|\sum_{k=1}^d \eps_k T_k\Big\|_{\I_p} \geq  \sum_{k=1}^d \eps_k T_k[e_1, \ldots, e_1, v_r] = d^{1-\frac1p} = d^{\frac12-\frac1p} \typetwopvar $$
    holds, since $\typetwopvar = \sqrt{d}$ as every tensor has injective norm $1$. Taking the expectation over $\eps_k$ being random finishes the proof, since the expected supremum of Gaussian processes dominate the expected supremum of rademacher processes (see section 4.2 in~\cite{LT91}):
    $$ \E \bigg[ \Big\| \sum_{k=1}^n \rv{g}_k  T_k \Big\|_{\I_p} \bigg] \gtrsim \E \bigg[ \Big\|\sum_{k=1}^n \boldsymbol{\eps}_k  T_k \Big\|_{\I_p} \bigg] \geq d^{\frac12-\frac1p} \typetwopvar . $$
\end{proof}

\begin{rem}
    Several steps of our proof techniques generalize to $\ell_p$ injective norms for $p$ being outside of the regime we discussed. In fact, \Cref{thm:masterthm} also holds for $p= \infty$ by simply leaving out the ball halving step in \Cref{cor:halvinglpball}, which doesn't involve the constant $\sqrt{p}$ that would tend to infinity in this case. The bound does improve upon the classical $\eps$-net approach from \Cref{prop:trivialsigmastarbound}, but we decided to omit it from our results as the question about the type $2$ constant being $\asymp_r \sqrt{d}$ for this norm is already solved by the standard approach.
\end{rem}

\section{Open Problems and Further Related Directions}

\begin{par}
In this section, we mention further connections to other areas of mathematics and theoretical computer science to motivate the future study of injective norms of random tensors. In particular, we discuss open questions, unexplored directions or special cases and how these could lead to improvements of state-of-the-art results in different fields. 
\end{par}

\subsection{Locally Decodable Codes and Type Constants}\label{subsection:ldcs}

\begin{par}
    The connections between type (and cotype) constants of the $\ell_p$ injective norm and locally decodable codes have been observed in \cite{BNR12,Bri16,Gop18}. We give a brief overview of LDCs and mention their relationship with our bounds and conjectures. A (binary) {\em locally decodable code} (LDC) $\calC : \{0,1\}^n \rightarrow \{0,1\}^d$ is a mapping from any $n$-bit message $b \in \{0,1\}^n$ to a $d$-bit codeword $\calC(b) \in \{0,1\}^d$ that satisfies the following property: given an arbitrary vector $y \in \{0,1\}^d$ obtained by corrupting $\calC(b)$ in a constant fraction of its coordinates, one can recover any message bit $b_i$ with non-trivial probability by only querying $y$ in a few locations. The code is said to be {\em linear} if the mapping $\calC : \{0,1\}^n \rightarrow \{0,1\}^d$ is a linear map. Formally, LDC codes are defined as follows \cite{KT00}.
\end{par}

\begin{defn}[Locally Decodable Code]
\label{defn:LDC}
A code $\calC : \{0,1\}^n \rightarrow \{0,1\}^d$ is $(q, \delta, \eps)$-locally decodable if there exists a randomized decoding algorithm $\Dec(\cdot)$, which takes an $i \in [k]$ as input and given oracle access to some $y \in \{0,1\}^d$ satisfies the following properties: 
\begin{enumerate}
    \item[(1)] The algorithm $\Dec$ never makes more than $q$ queries to the string $y$; 
    \item[(2)] For all $b \in \{0,1\}^n$, $i \in [n]$, and $y \in \{0,1\}^d$ such that $\Delta(y, \calC(b)) \leq \delta n$, one has $\Prob[\Dec^{y}(i) = b_i] \geq \frac{1}{2} + \eps$, where $\Delta(x,y)$ is the Hamming distance between $x$ and $y$. 
\end{enumerate}
\end{defn}

LDCs are extensively studied in computer science. 
Apart from being natural objects in coding theory, locally decodable codes are closely related to many other areas of theoretical computer science, such as the PCP theorem \cite{AS98, ALM+98}, complexity theory \cite{Yek12}, worst-case to average-case reductions \cite{Tre04}, private information retrieval \cite{Yek10}, secure multiparty computation \cite{IK04}, derandomization \cite{DS05}, matrix rigidity \cite{Dvi11}, data structures \cite{CGW13}, and fault-tolerant computation \cite{Rom06}.
We refer to the excellent survey \cite{Yek12} for more background and applications of LDCs.

A central question studied in coding theory is the smallest possible blocklength $n$, or equivalently, the largest possible rate, that can be achieved by a $(q,\delta,\varepsilon)$-LDC (or $q$-LDC for short, as $\delta$, $\varepsilon$ are usually taken as some small constants). 
The case of $q = 2$ is essentially resolved by classical results: the  Hadamard code is a  $2$-LDC with blocklength $d = 2^n$ and a matching lower bound of $2^{\Omega(n)}$ is known \cite{KW04, GKST06, Bri16, Gop18}. 

For the case of $q=3$, there is still a wide gap in our understanding. The current best known lower bound is $d \geq \widetilde{O}(n^3)$ given in recent work \cite{AGKM23,HKM+24}, while the best known construction gives a linear binary $3$-LDC with $d \leq \exp(\exp(O(\sqrt{\log n \log \log n})))$ \cite{Yek08,Efr09}. For larger (but constant) $q > 3$, the construction in \cite{Efr09} gives an upper bound of $d \leq \exp(\exp(O((\log n)^{\frac{1}{\log q}} (\log \log n)^{\frac{\log q - 1}{\log q}})))$, while the best known lower bound is $d \geq n^{\frac{q}{q-2}} / \polylog(n)$ when $q$ is even and $d \geq n^{\frac{q+1}{q-1}} / \polylog(n)$ when $q$ is odd. We refer to the recent work of \cite{AGKM23,KM24,HKM+24} and references therein for more background on LDCs and the related notion of locally correctable codes (LCCs).

We now focus on connections between LDCs and random tensors. In particular, we highlight their relation to conjectured refinements of \Cref{cor:boundtype2constant}.

\begin{defn}[Normal LDC] \label{defn:normal_LDC}
A code $\mathcal{C}: \{-1,1\}^n \rightarrow \{-1,1\}^d$ is $(q,\delta, \varepsilon)$-normally locally decodable if for each $i \in [n]$, there is a $q$-uniform hypergraph matching $\H_i$ on the set $[d]$ (a collection of vertex disjoint hyperedges, where every edge has $q$ vertices) with at least $\delta d$ hyperedges such that for every $C \in \H_i$, one has $\p_{b \sim \{\pm 1\}^n}(b_i = \prod_{v \in C} \mathcal{C}(b)_v) \geq \frac{1}{2} + \varepsilon$. 
In particular, for linear codes, one has $b_i = \prod_{v \in C} \mathcal{C}(b)_v$ with probability 1. We therefore call these linear  $(q,\delta)$-normally locally decodable codes. 
\end{defn}

Using known reductions \cite{Yek12}, it is possible to turn any LDC codes into normal form. 

\begin{fact}[Reduction to LDC Normal Form \cite{Yek12}] \label{fact:reduction_normal_LDC}
Let $\mathcal{C}: \{0,1\}^n \rightarrow \{0,1\}^d$ be a $(q,\delta,\varepsilon)$-locally decodable codes, then there exists another code $\mathcal{C}': \{-1,1\}^n \rightarrow \{-1,1\}^{O(d)}$ that is $(q,\delta',\varepsilon')$-normally decodable, where $\delta' \geq \varepsilon \delta / 3 q^2 2^{q-1}$ and $\varepsilon' \geq \varepsilon / 2^{2q}$. 
\end{fact}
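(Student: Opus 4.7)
The plan is to reduce the given $(q, \delta, \varepsilon)$-LDC $\calC$ to normal form in three stages: first canonicalize and smooth the decoder's query distribution, then replace its arbitrary reconstruction function with a parity via Fourier analysis over $\{-1, 1\}^q$, and finally extract a large hypergraph matching from the resulting distribution of good query sets for each message coordinate. Each stage loses a bounded factor depending only on $q$, and these multiply to give the claimed bounds $\delta' \geq \varepsilon\delta/3q^2 2^{q-1}$ and $\varepsilon' \geq \varepsilon/2^{2q}$.

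First I would convert the alphabet from $\{0,1\}$ to $\{-1,1\}$ via $x \mapsto (-1)^x$, so that XOR parities become products. Next, by the standard Katz--Trevisan smoothing argument, I can assume the decoder on input $i$ samples a $q$-subset $C \subseteq [d]$ from some distribution $\mu_i$ whose marginal on any single coordinate $j \in [d]$ is at most $O(q/\delta d)$; otherwise, an adversary could concentrate its $\delta d$ corruptions on the overqueried coordinates and defeat the decoder. This step, possibly after padding the codeword with a constant number of fixed coordinates (explaining the $\{-1,1\}^{O(d)}$ blocklength), costs only a constant factor in $\varepsilon$ and $\delta$.

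Second, for each $i$ and each query set $C$ drawn from $\mu_i$, the decoder reconstructs $b_i \in \{-1,1\}$ from the queried symbols by some Boolean function $f_{i,C} : \{-1,1\}^C \to \{-1,1\}$. Expanding in the Fourier basis $\chi_S(x) = \prod_{v \in S} x_v$, the correctness condition $\E_b[b_i \cdot f_{i,C}(\calC(b)|_C)] \geq 2\varepsilon$ forces some Fourier weight $|\hat{f}_{i,C}(S)|$ to be at least $\varepsilon/2^{q-1}$. An averaging argument over $C \sim \mu_i$ then fixes a single subset $S_{i,C} \subseteq C$ (padded if necessary to cardinality $q$ using sentinel coordinates, and possibly with its sign absorbed into the codeword) such that $\p_b[b_i = \prod_{v \in S_{i,C}} \calC(b)_v] \geq \tfrac12 + \varepsilon/2^{2q}$, yielding the advantage $\varepsilon'$. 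This produces, for each $i$, a collection $\mathcal{E}_i \subseteq \binom{[d]}{q}$ of hyperedges carrying $\Omega(\varepsilon)$ $\mu_i$-mass on which the parity prediction succeeds.

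Third I would extract the matching $\H_i$ greedily from $\mathcal{E}_i$: repeatedly pick the hyperedge of largest residual $\mu_i$-weight and delete all hyperedges intersecting it. Because $\mu_i$ is smooth, each removal step discards edges of total $\mu_i$-mass at most $q \cdot O(q/\delta d)$, so after selecting $\Omega(\varepsilon\delta d / q^2 2^{q-1})$ disjoint edges the remaining mass is exhausted; tracking constants yields the claimed bound on $|\H_i|$. The hard part will be bookkeeping the constants across the Fourier and smoothing reductions simultaneously: in particular, one must argue that the same subset $S_{i,C}$ is selected for a $\mu_i$-noticeable fraction of $C$'s (rather than being spread across $2^q$ different choices), so that the subsequent matching argument has $\Omega(\delta d)$ candidate hyperedges to work with. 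Handling degenerate cases where $|S_{i,C}| < q$ by padding with fresh dummy coordinates is what drives the $O(d)$ blowup in the codeword length.
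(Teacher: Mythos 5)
The paper does not prove this statement; it is a black-box citation to Yekhanin's survey (and implicitly to the sharper bookkeeping in Bri\"et/Gopi), so there is no paper proof to compare against. Your three-stage plan (alphabet conversion, Katz--Trevisan smoothing, Fourier reduction to a parity, greedy matching extraction) is the standard route from that literature, so the overall architecture is right.

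However, the Fourier step as you state it contains a genuine logical slip. You claim that the advantage $\E_b[b_i \cdot f_{i,C}(\calC(b)|_C)] \geq 2\varepsilon$ ``forces some Fourier weight $|\hat{f}_{i,C}(S)|$ to be at least $\varepsilon/2^{q-1}$.'' That does not follow: the Fourier coefficients $\hat{f}_{i,C}(S)$ are intrinsic to the Boolean function $f_{i,C}$ and say nothing by themselves about how well $f_{i,C}$ predicts $b_i$. The correct step is to expand
\[
\E_b\bigl[b_i \cdot f_{i,C}(\calC(b)|_C)\bigr] \;=\; \sum_{S \subseteq C} \hat{f}_{i,C}(S)\, \E_b\bigl[b_i \cdot \chi_S(\calC(b)|_C)\bigr],
\]
use $|\hat{f}_{i,C}(S)| \leq 1$ and the fact that there are $2^q$ terms to conclude that \emph{some single parity} has large correlation with $b_i$, namely $|\E_b[b_i \chi_S(\calC(b)|_C)]| \geq 2\varepsilon/2^q$. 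It is this parity-correlation (not the Fourier weight of $f$) that supplies $\varepsilon'$. In addition, the arithmetic from that point to the stated $\varepsilon' \geq \varepsilon/2^{2q}$ and $\delta' \geq \varepsilon\delta/(3 q^2 2^{q-1})$ is left as ``bookkeeping''; in the standard proof the remaining factors come from (i) keeping only the good $\mu_i$-mass of query sets (a factor of $\varepsilon$ entering $\delta'$), (ii) padding $S$ up to size $q$ with dummy coordinates, and (iii) the smoothness bound $q/\delta d$ on coordinate marginals that feeds into the greedy matching count, and you should trace these explicitly rather than assume they multiply out to the claimed constants.
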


While we have presented the above definitions and results for general $\varepsilon$ and $\delta$, in the following we are going to consider $\varepsilon, \delta > 0$ being some small constants, and the focus lies on understanding the dependencies between $d$ and $n$.

For $p<2r$, \Cref{cor:boundtype2constant} does not have matching exponents  in the upper and lower bounds (in terms of their dependence on $d$), and it does not immediately imply any progress for LDCs. 
Nevertheless, natural conjectured strengthening of \Cref{cor:boundtype2constant} would imply improved lower bounds for the blocklength of certain $q$-LDCs. We formulate two conjectures, and discuss their implications to LDC lower bounds.

\begin{conj}\label{conj:actualtype2constant}
    Let $\mathcal{C}_{r,p}(d)$ be Type-$2$ constant as in \Cref{def:type2constant}($2 \leq p < \infty$), then 
    \begin{equation}\label{eq:weaktypeconj}
        \mathcal{C}_{r,p}(d) \cleq{r,p} d^{\frac12- \frac1{\max\{p,r\}}} (\log \, d)^{E_{r,p}}
    \end{equation}
    holds, where $E_{r,p}$ is an exponent only depending on $r$ and $p$. We cannot rule out that the following stronger inequality also holds: 
    $$ \mathcal{C}_{r,p}(d) \cleq{r,p} d^{\frac12- \frac1p} (\log \, d)^{E_{r,p}} .$$
\end{conj}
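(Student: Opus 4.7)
The plan is to sharpen the chain of inequalities underlying the proof of \Cref{cor:boundtype2constant}; the gap between the current exponent $1/\max\{p,2r\}$ and the conjectured $1/\max\{p,r\}$ is already visible in tiny examples. Consider for instance $T = \rv{g}_1 I_d$ with $r=2$ and $p=2$: here $\typetwopvar = 1$ and NCK gives $\E\norminx{T}{2} \lesssim \sqrt{\log d}$, while the parameters feeding \Cref{thm:sharpestpnck} are $\pvar{0} = \pvar{1} = 1$ and $\pvar{2} = \sqrt{d}$, so the present bound only yields $\pvar{0}^{1/2}\pvar{2}^{1/2} = d^{1/4}$ through \Cref{cor:pdlogcovernumbers}. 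Hence the polynomial loss is already present inside the covering number estimate itself, not merely in the coarser transition \Cref{cor:type2var} between $\pvar{q}$ and $\typetwopvar$.

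My first attempt would be to refine \Cref{cor:pdlogcovernumbers} by replacing Dudley's integral with a generic chaining argument (in the sense of Talagrand's $\gamma_2$-functional), using a sequence of admissible partitions that adapts to the local geometry of $\pdball$ in the metric $\md$; the hope is that the contribution of $\pvar{q}$ for $q \geq 2$, which is responsible for all the polynomial loss, can be ``chained away'' whenever $\pvar{0}$ is much smaller than its worst case. Independently, I would try to adapt Latała's inductive chaining from \Cref{rem:latalareference} to the $p > 2$ setting, using the $2$-uniform convexity inequality of \Cref{thm:weakparallelogram} inside his inductive step to give a ball-halving mechanism analogous to \Cref{cor:halvinglpball}; this alone would reduce the $\log d$ in \Cref{thm:masterthm} to the desired fixed $(\log d)^{E_{r,p}}$.

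The main obstacle, as I see it, is genuinely this: the $\pvar{q}^{1/q}$ contributions in the covering number bound are tight for iid Gaussian tensors — precisely the role of \Cref{lemma:optimalitynormtransition} — so for ``balanced'' tensors no covering refinement can help, while for ``unbalanced'' examples like the identity matrix the refinement has to specifically detect and exploit the degeneracy of the tensor into lower-rank contractions. Designing a chaining metric with this sensitivity appears to require a new variance parameter that interpolates between $\pvar{q}$ and a dual quantity defined via $\ell_{p^*}$-balls, in the spirit of type/cotype duality. The stronger form of the conjecture, $\mathcal{C}_{r,p}(d) \cleq{r,p} d^{1/2 - 1/p}(\log d)^{E_{r,p}}$, amounts to a geometric Non-Commutative Khintchine inequality for tensors and looks out of reach of current techniques; even the special case $r=2$, $2 \leq p < 4$ is only known via operator-theoretic tools (\Cref{rem:nckimprovedtype2}).
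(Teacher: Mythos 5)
This statement is a \emph{conjecture}; the paper does not prove it, placing it instead in the open-problems discussion alongside its implications for LDC lower bounds. You correctly treat it as open and do not overreach by claiming a proof.

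Your diagnosis of where the paper's machinery loses is accurate. For $T = \rv{g}_1 I_d$ with $p = r = 2$ one indeed has $\pvar{0} = \pvar{1} = 1$ and $\pvar{2} = \sqrt d$, so \Cref{thm:masterthm} gives $\E\norminx{T}{2} \lesssim \log d + d^{1/4}$ although the truth is $O(1)$, and the excess genuinely originates in the covering estimate and the entropy integral (more precisely in the term $\Delta^{1-1/q}\pvar{q}^{1/q}$ with $\Delta \lesssim \pvar{0}$ from the proof of \Cref{thm:simplifiedpdbound}), not in the coarser transition of \Cref{cor:type2var}. Your two suggested refinements, adaptive generic chaining in the $\gamma_2$ sense and a $p>2$ analogue of Lata{\l}a's inductive argument combined with the ball-halving of \Cref{cor:halvinglpball}, are both sensible; the paper's own \Cref{rem:latalareference} flags the second as a plausible route to eliminating the $\log d$ factor, though not the polynomial gap. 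Your remark that the $\pvar{q}^{1/q}$ terms are tight for the i.i.d.\ Gaussian model (via \Cref{lemma:optimalitynormtransition}), so any refinement must be sensitive to ``degeneracies'' such as the identity matrix, is also a fair characterization of the difficulty.

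One imprecision to flag: you write that for $r=2$ and $2 \leq p < 4$ the conjecture ``is only known via operator-theoretic tools.'' It is in fact \emph{open} on $(2,4)$: the NCK-plus-H\"older route of \Cref{rem:nckimprovedtype2} gives $d^{1-2/p}$, which is strictly weaker than the conjectured $d^{1/2-1/p}$ once $p > 2$, and \Cref{cor:boundtype2constant} gives $d^{1/4}$, also short of the target. Conversely, for $r = 2$ and $p \geq 4$ (and in general whenever $p \geq 2r$) the conjecture is already settled by \Cref{cor:boundtype2constant}, and the proof there is the paper's geometric one, not an operator-theoretic one.
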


\begin{par}
    Both of the conjectured inequalities are consistent with the noncommutative Khintchine inequality~\eqref{eq:NCK}, (which shows $\CCC_{2,2} \lesssim \sqrt{\log d}$,) and extend it naturally. A proof of either of them would have interesting implications in coding theory, which we discuss in \Cref{subsection:ldcs}.
\end{par}

\begin{par}
    \Cref{conj:actualtype2constant} would imply new lower bounds for the length of $q$-normal LDCs and thus also for $q$-LDCs for odd (and fixed) $q \geq 5$. The proof essentially mimics a lower bound proof for $2$-LDCs~\cite{Bri16,Gop18}.
\end{par}

\begin{lemma}
    If \Cref{conj:actualtype2constant} is true, then for any $(q, \delta, \eps)$-normally locally decodable code $\mathcal{C}: \{-1,1\}^n \rightarrow \{-1,1\}^d$ we have
    $$ \eps \delta n^{\frac{q}{q-2}} \cleq{q} d (\log \, d)^{E_{q}'} $$
    for some exponent $E_{q}'$ only depending on $q$.
\end{lemma}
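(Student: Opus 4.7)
The plan is to adapt the classical matrix argument that lower-bounds the blocklength of $2$-LDCs~\cite{Bri16,Gop18}, substituting \Cref{conj:actualtype2constant} for the noncommutative Khintchine inequality. To each coordinate $i \in [n]$ I would associate the order-$q$ adjacency tensor of the hypermatching $\H_i$: after fixing an arbitrary ordering $(v_1(C),\dots,v_q(C))$ of the vertices of each hyperedge $C \in \H_i$, set $T_i \coloneqq \sum_{C \in \H_i} e_{v_1(C)} \otimes \cdots \otimes e_{v_q(C)} \in (\R^d)^{\otimes q}$. An AM--GM estimate combined with the disjointness of edges in a matching should give, for any $x_1,\dots,x_q \in \mathbb{B}_q^d$,
$$ |T_i[x_1,\dots,x_q]| \;\leq\; \frac{1}{q}\sum_{k=1}^q \sum_{C \in \H_i}|(x_k)_{v_k(C)}|^q \;\leq\; \frac{1}{q}\sum_{k=1}^q \|x_k\|_q^q \;\leq\; 1, $$
so $\norminx{T_i}{q} \leq 1$ and $\sum_i \norminx{T_i}{q}^2 \leq n$.

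Next, I would lower-bound $\E_b \norminx{\sum_i b_i T_i}{q}$ by plugging in the codeword as a test vector. Normal decodability gives $\E_b[b_i \prod_{v \in C}\calC(b)_v] \geq 2\eps$ for each $i$ and $C \in \H_i$; summing over $C$ and over $i$, and using that $\calC(b)\in\{\pm 1\}^d$ implies $\calC(b)/d^{1/q} \in \mathbb{B}_q^d$, one obtains pointwise in $b$
$$ \Bigl\| \textstyle\sum_i b_i T_i \Bigr\|_{\I_q} \;\geq\; \frac{1}{d}\Bigl\langle \textstyle\sum_i b_i T_i,\, \calC(b)^{\otimes q}\Bigr\rangle, $$
whose expectation over $b \sim \mathrm{Unif}(\{\pm1\}^n)$ is $\geq 2\eps\delta n$. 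For the matching upper bound, \Cref{conj:actualtype2constant} with $r=p=q$ gives $\CCC_{q,q}(d) \cleq{q} d^{1/2-1/q}(\log d)^{E_{q,q}}$, and combining Jensen's inequality with the type-$2$ definition (the $b_i$ are i.i.d.\ Rademacher under this distribution) yields
$$ \E_b \Bigl\| \textstyle\sum_i b_i T_i \Bigr\|_{\I_q} \;\leq\; \CCC_{q,q}(d)\sqrt{\textstyle\sum_i \norminx{T_i}{q}^2} \;\cleq{q}\; d^{(q-2)/(2q)}(\log d)^{E_{q,q}} \sqrt{n}. $$
Combining with the lower bound and squaring produces $(\eps\delta)^2 n \cleq{q} d^{(q-2)/q}(\log d)^{2E_{q,q}}$; raising to the $q/(q-2)$-th power delivers the stated bound, where the precise power of $\eps\delta$ on the left is absorbed into the $q$-dependent constant in the LDC regime where $\eps,\delta$ are treated as constants.

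The main obstacle is the reliance on \Cref{conj:actualtype2constant} itself: the unconditional bound from \Cref{cor:boundtype2constant} only gives $\CCC_{q,q}(d) \cleq{q} d^{1/2-1/(2q)}\log d$, which is a factor of $d^{1/(2q)}$ too weak and produces no improvement over the existing LDC lower bounds for odd $q\geq 5$. Every other step in the argument is elementary (AM--GM on $\|T_i\|_{\I_q}$, plug in the codeword to get the lower bound, one application of Jensen plus the type-$2$ definition to get the upper bound), so the lemma cleanly isolates how an NCK-type sharpening of the $\ell_q$ type-$2$ constant would propagate into new blocklength bounds for locally decodable codes.
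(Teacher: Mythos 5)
Your proof is correct and follows essentially the same route as the paper's: associate the adjacency tensor of each hypermatching to a coordinate, bound its $\ell_q$ injective norm by $O_q(1)$ via AM-GM plus edge-disjointness, plug in the scaled codeword to extract the lower bound $\gtrsim \eps\delta n$, and compare against the type-2 upper bound supplied by Conjecture~\ref{conj:actualtype2constant}. The only stylistic difference is that you work with an asymmetric single-ordering adjacency tensor and bound its asymmetric injective norm directly, while the paper uses the full symmetric adjacency tensor (all $q!$ orderings of each hyperedge) and routes through the symmetric form plus polarization (\Cref{prop:symnormequiv}); your variant slightly streamlines this step, and your explicit remark about the exponent of $\eps\delta$ being absorbed into the constant matches the paper's implicit treatment of $\eps,\delta$ as fixed constants.
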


\begin{proof}
    Let $\mathcal{H}_1, \ldots, \mathcal{H}_n$ be the hypergraph matchings on the set $[d]$ of a $(q,\delta, \varepsilon)$-normal LDC $\mathcal{C}: \{-1,1\}^n \rightarrow \{-1,1\}^d$. Let $T_1, \ldots, T_n \in (\R^d)^{\otimes q}$ be the adjacency tensors of the hypergraph matchings. Let $\boldsymbol{\eps}_1, \ldots, \boldsymbol{\eps}_n$ be independent Rademacher random variables and $\mathcal{C}(\boldsymbol{\eps}) \in \{-1,1\}^d$ be the corresponding codeword, which satisfies that for any $i$ and for any $C \in \mathcal{H}_i$ we have $\Prob[\boldsymbol{\eps}_i = \prod_{v \in C} \mathcal{C}(\boldsymbol{\eps})_v] \geq \frac{1}{2} + \eps$. Since $\mathcal{H}_i$ is a hypergraph matching and its every hyperedge is represented in $T_i$ by index symmetry $q!$ times, we deduce
    $$ \E[\boldsymbol{\eps}_i \langle T_i, \mathcal{C}(\boldsymbol{\eps})^{\otimes q} \rangle ] = \sum_{C \in \mathcal{H}_i} q!\E \bigg[ \boldsymbol{\eps}_i  \prod_{v \in C} \mathcal{C}(\boldsymbol{\eps})_v \bigg ]  \geq \sum_{C \in \mathcal{H}_i} q!2\eps \geq q! 2\eps \delta d.$$
    Summing this bound over the index $i$ and dividing everything by $d$ yields
    $$ \E \bigg[ \sum_{i=1}^n \boldsymbol{\eps}_i \langle T_i, (d^{-\frac1q}\mathcal{C}(\boldsymbol{\eps}))^{\otimes q} \rangle \bigg] \geq q! 2\eps \delta n.$$
    Since the vector $d^{-\frac1q}\mathcal{C}(\boldsymbol{\eps})$ has $\ell_q$ norm $1$, we also get an upper bound using \Cref{conj:actualtype2constant}.
    $$ \E \left[ \sum_{i=1}^n \boldsymbol{\eps}_i \langle T_i, (d^{-\frac1q}\mathcal{C}(\boldsymbol{\eps}))^{\otimes q} \rangle \right] \leq \E \Big\|\sum_{i=1}^n \boldsymbol{\eps}_i T_i\Big\|_{\I_q} \cleq{q} d^{\frac12- \frac1q} (\log \, d)^{E_{q}} \sqrt{\sum_{i=1}^n \norminx{T_i}{q}^2} .$$
    We claim $\norminx{T_i}{q} \lesssim_q 1$. Indeed, let $u \in \mathbb{B}_q^d$, then we have by the AM-GM inequality
    $$ \abs{\langle T_i, u^{\otimes q} \rangle}= \bigg|\sum_{C \in \mathcal{H}_i} q!\prod_{v \in C} u_v \bigg| \leq \bigg|\sum_{C \in \mathcal{H}_i} q!\sum_{v \in C} \frac{\abs{u_v}^q}{q}\bigg| \leq (q-1)!.$$
    The last inequality follows from the fact that every coordinate $u_v$ appears at most once in the sum, since $\mathcal{H}_i$ is a hypergraph matching. The claim $\norminx{T_i}{q} \lesssim_q 1$ then follows from polarization (\Cref{prop:symnormequiv}). Combing all estimates we derived thus far gives
    $$ \eps \delta n \cleq{q} d^{\frac12- \frac1q} (\log \, d)^{E_{q}} .\sqrt{n}$$
    This can be rewritten as the upper bound $\eps \delta n^{\frac{q}{q-2}} \lesssim_q d (\log \, d)^{E_{q}'}$.
\end{proof}

\subsection{Directions for the Independent Entry Model}\label{subsection:refineindepentry}

\begin{par}
    Gaussian (and Rademacher) matrices with independent entries (but potentially different variances among the entries) is a model structure where geometric approaches have proven to be successful~\cite{BvH16,vH17,Lat24,GHLP17,APSS24}. In a recent breakthrough by Latala \cite{Lat24}, bounds on the spectral norm of random Rademacher matrices were obtained that are accurate up to a constant factor when the variances are either 0 or 1, and a $\log \log \log d$ factor in general. For $p \neq 2$, the works~\cite{GHLP17,APSS24} have made substantial progress in understanding Gaussian matrices with independent entries. Matching upper and lower bounds that are accurate up to logarithmic factors for different matrix injective norms were proven in~\cite{APSS24}. If one were to study tensor generalizations of this problem (as in \Cref{subsec:ind_ent_tensor_PCA}), this reference might be a good starting point.

    We emphasize that the independent entry assumption changes the nature of the problem. One reason is that the metric space given by the natural distance gains a susbtantial amount of symmetry: multiplying such a tensor entry-wise with any tensor that has $\pm 1$ entries leaves the distribution invariant. Moreover, we can expect much better concentration of the norm than in the general setting. Using Gaussian concentration one can show 
    $$ \Prob[\norminp{T} \geq t+ \E \norminp{T}] \leq \exp \Big({-\frac{t^2}{2\pvar{0}^2}} \Big),$$
    (see Corollary 4.14 in~\cite{BBvH23} for a quick proof in the case of $p  = r = 2$; the tensor generalization follows similarly). For $p=2$, the parameter $\evar{0}$ in the independent entry model is the largest variance of the entries in $T$, which can be shown with an orthogonality argument similar to the proof of \Cref{lemma:optimalitynormtransition}. In the general case, however, we may just get concentration of a single scalar random variable when all tensors are the same. 
    We leave the study of the independent entry model for tensors and further applications to tensor PCA as interesting open problems. 
\end{par}

\subsection{Alternative Approaches for the Euclidean Case}

\begin{par}
    \Cref{thm:masterthm} takes the following form in the case of $p = r = 2$ (the Euclidean matrix case):
    $$ \E \norminx{T}{2} \lesssim \log d \, \, \evar{1} +   \evar{2}^{\frac12} \evar{0}^{\frac{1}{2}}.$$
    The term $\evar{2}^{\frac12} \evar{0}^{\frac{1}{2}}$ is unnecessary as the noncommutative Khintchine inequality~\eqref{eq:NCK} shows. It is therefore tempting to conjecture that the second term 
    $$ \max_{2 \leq q \leq r} \pvar{q}^{\frac1q} \pvar{0}^{\frac{q-1}{q}}$$
    in~\eqref{eq:intromainthm} might not be needed when $p=2$. We were thus far unable to come up with an example where this term is required. There is further evidence why this suspicion may be correct. Consider the modified model where $T_1, \ldots, T_n \in (\R^d)^{\otimes r}$ are symmetric tensors and $Q_1, \ldots, Q_n$ are random orthogonal matrices distributed according to the Haar measure on $\operatorname{O}(d)$. \cite{Kevinsthesis} showed the following bound via a classical discretization argument:
    $$ \E \Big\|\sum_{k=1}^n T_k[Q_k\cdot, \cdot, \ldots, \cdot]\Big\|_{\I_2} \ceq{r} \Big\|\sum_{k=1}^n T_k \star_1 T_k\Big\|_{\I_2}^{1/2} = \evar{1}$$
    This model can be seen as a type of ``free analogue" of the Gaussian model we study in the Euclidean matrix case. Informally, the orthogonal matrices induce noncommutativity between the matrices, which causes the logarithmic term in the noncommutative Khintchine inequality~\eqref{eq:NCK} to disappear. A more detailed discussion on this phenomenon can be found in~\cite{BBvH23}. The fact that such a generalization is also true for tensors leads us to conjecture that only the parameter $\evar{1}$ is relevant for controlling the injective $\ell_2$ norm.

    \begin{conj}\label{conj:Euclideantensornck}
    Let $T \in (\R^d)^{\otimes r}$ be a symmetric random jointly Gaussian tensor with $\E[T]=0$, there exists an exponent $E(r)>0$ only depending on $r$, such that 
    $$ \E \norminx{T}{2} \cleq{r} (\log d)^{E(r)} \evar{1} .$$
    \end{conj}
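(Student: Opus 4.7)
My plan is to attempt induction on the tensor order $r$. The base case $r=2$ is precisely the noncommutative Khintchine inequality~\eqref{eq:NCK}, giving the desired bound with $E(2) = \tfrac{1}{2}$. For the inductive step, I would exploit the identity
\[
\norminx{T}{2} \;=\; \sup_{x \in \edball}\,\norminx{T[x,\cdot,\ldots,\cdot]}{2},
\]
and note that for each fixed $x$, the contracted tensor $T[x,\cdot,\ldots,\cdot] = \sum_{k=1}^n \rv{g}_k\, T_k[x,\cdot,\ldots,\cdot]$ is a jointly Gaussian order-$(r{-}1)$ tensor whose $\sigma_{(1,\I_2)}$ parameter is uniformly bounded by $\evar{1}$ of the original tensor; this is immediate from the variational formula in~\Cref{lemma:variances}, since any vectors $u_1,\dots,u_{r-2}\in \edball$ for the contracted tensor together with $x$ are admissible in the supremum defining $\evar{1}(T)$. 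The inductive hypothesis would then bound $\E\,\norminx{T[x,\cdot,\ldots,\cdot]}{2}$ pointwise by $(\log d)^{E(r-1)}\,\evar{1}$.

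The hard part is upgrading this pointwise control into a uniform bound $\E\sup_{x \in \edball}\,\norminx{T[x,\cdot,\ldots,\cdot]}{2}$. Gaussian concentration shows that $x \mapsto \norminx{T[x,\cdot,\ldots,\cdot]}{2}$ is a Lipschitz function of the coefficients $\{\rv{g}_k\}$ with Lipschitz constant at most $\evar{0} \le \evar{1}$, and the pairwise increments are subgaussian with variance proxy governed by $\E\,\norminx{T[x-y,\cdot,\ldots,\cdot]}{2} \le (\log d)^{E(r-1)}\,\evar{1}\,\norm{x-y}_2$ (again by the inductive hypothesis plus homogeneity). However, a Dudley-type chaining in this metric over $\edball \subset \R^d$ produces a spurious $\sqrt{d}$ factor rather than a polylogarithmic one, so a naive induction cannot close the recursion.

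The main obstacle is thus to avoid this $\sqrt{d}$ loss at each induction step. I see two plausible routes. The first is to adapt the free-probability comparison framework of~\cite{BBvH23} to tensors, comparing the Gaussian model directly with the orthogonal-matrix (``free'') tensor model discussed in the excerpt (for which $\E \norminx{\cdot}{2} \asymp_r \evar{1}$ without logarithmic factors); this requires a tensor-level analog of the matrix trace-method comparisons underlying the matrix case, which is delicate because the trace is an operator-theoretic object without a clean tensor counterpart, and is in fact the same fundamental obstruction the paper sets out to circumvent geometrically. The second is to extend Rudelson's majorizing-measures argument from rank-$1$ symmetric tensors (as carried out in~\cite{Kevinsthesis}) to arbitrary symmetric $T_k$ by decomposing each summand into rank-$1$ pieces, while carefully ensuring that this decomposition does not inflate $\evar{1}$. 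Either route would simultaneously resolve the Talagrand-style question of finding a purely geometric proof of the matrix noncommutative Khintchine inequality, which is part of why~\Cref{conj:Euclideantensornck} remains genuinely open.
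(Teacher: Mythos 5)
This statement is a conjecture in the paper and is not proved there; the paper offers only motivation, evidence, and a discussion of obstructions. You correctly treat it as open rather than as a theorem, and your assessment of the obstacles is sound. The inductive reduction you outline is exactly where a naive attempt stalls: the base case $r=2$ is noncommutative Khintchine, and contraction by $x\in\edball$ does send $\evar{1}$ to the corresponding order-$(r-1)$ parameter of $T[x,\cdot,\ldots,\cdot]$, bounded by $\|x\|_2\,\evar{1}$; but converting the pointwise inductive bound into a uniform bound over $\edball$ by Dudley-type chaining in the induced metric introduces a $\sqrt{d}$ factor per induction step, which cannot fit inside a $(\log d)^{E(r)}$ budget, so the recursion does not close.

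Your two proposed escape routes also track the paper's own discussion. The paper cites the Haar-orthogonal tensor model bound from~\cite{Kevinsthesis}, in which the expected $\ell_2$ injective norm is $\ceq{r} \evar{1}$ with no logarithmic factor, as evidence that only $\evar{1}$ should govern the norm — this is the free-probability analogy you invoke via~\cite{BBvH23}. It also records the obstructions to the operator-theoretic route you describe: there is no satisfactory trace or tensor-network calculus for deterministic tensor networks, flattening can lose $\poly(d)$ factors, and low-degree hardness conjectures~\cite{HKP+17} rule out any low-power moment method. The majorizing-measures route \`a la Rudelson is known only for rank-$1$ symmetric summands~\cite{Rud96c,Kevinsthesis}, and, as you note, decomposing a general $T_k$ into rank-one pieces can inflate $\evar{1}$. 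In short, your proposal is a faithful account of the state of the art; there is no hidden gap beyond the one you and the paper both acknowledge. The conjecture remains open, and, as the paper remarks, it implies the stronger version of \Cref{conj:actualtype2constant} at $p=2$.
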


    This estimate would actually be two-sided, since one can also show by an application of \Cref{lemma:variances} and Jensen's inequality that
    $$ \E \norminx{T}{2}^2=  \E \left [ \sup_{u_1, \ldots, u_{r-1} \in \edball} \normf{T[u_1, \ldots, u_{r-1}, \cdot]}^2 \right ]= \E \norminx{T \star_1 T }{2} \geq \evar{1}^2 ,$$
    which would imply $\E \norminx{T}{2} \gtrsim \evar{1}$, since we have $\E \norminx{T}{2}^2 \asymp (\E \norminx{T}{2})^2$, this is a fact that holds more generally for Gaussian vectors in banach spaces (see section 4.3 in~\cite{LT91}). We remark that \Cref{conj:Euclideantensornck} requires the stronger version of our type constant conjecture in \Cref{conj:actualtype2constant} to be true for the case $p=2$. 
    
    Restricting to $\ell_2$ injective norms may allow for a wider range of tools to be used. For instance, tensor generalizations of the moment method (or tensor networks) have been studied recently \cite{Gur14,Gur16,Evn21,KMW24,Bon24}. There are a few difficulties that arise, however, if one were to use these techniques to attack \Cref{conj:Euclideantensornck}. One obstacle for this approach is the missing algebraic machinery for deterministic tensor networks, i.e., it is unclear how to generalize all the trace inequalities used in the proof of the noncommutative Khintchine inequality to tensor networks. 
    
    There are alternative approaches based on tensor flattening, which have already proven to be useful in practice~\cite{HSSS16,GM15,BDNY24}. 
    One could see a tensor $T \in (\R^d)^{\otimes 4}$ as a matrix in $\R^{d^2 \times d^2}$, the operator norm of this matrix would also dominate the tensor injective norm. This approach, however, disregards the higher degree structure of tensors and cannot give accurate bounds for deterministic tensors, which \Cref{conj:Euclideantensornck} also covers. There exist tensors $A \in (\R^d)^{\otimes 4}$ with $\|A\|_{\I_2} \leq 1$, but when seen as a $d^2 \times d^2$ matrix has operator norm at least $c\sqrt{d}$, incurring a $\Omega(\sqrt{d})$ factor loss. Such a tensor $A$ can be obtained by sampling its entries as independent Gaussians and then divide the entries by $C\sqrt{d}$ for a sufficiently large $C$.

    Due to such issues, more sophisticated flattening techniques have been considered in~\cite{HSSS16,GM15,BDNY24}, but the core problem remains of finding a general flattening technique from which one can trace back to the injective norm without losing dimensional factors. 
    The difficulty is likely to be fundamental, as there are conjectures about the hardness of approximating the injective $\ell_2$-norm using low-degree polynomials~\cite{HKP+17}, which would contradict the possibility of using a low power moment method to get bounds for the injective norm. Being able to use a power of order $\polylog(d)$ is necessary to prove a version of \Cref{conj:Euclideantensornck}, as otherwise the moment growth of the Gaussian random variables starts having a superpolylogarithmic contribution.
\end{par}

\appendix

\section{An Introduction for A Theoretical CS Audience}\label{sec:introTCS}

Matrix concentration inequalities have played an increasingly important role in many applications in computer science, mathematics, and other related fields. A well-studied and very general question in the study of matrix concentration inequalities is the operator norm of an arbitrary $d \times d$ symmetric\footnote{The symmetry assumption can be removed by using hermitian dilation (see \Cref{subsection:symbed}).} random matrix $X$ with mean-zero jointly Gaussian entries. Any such random matrix can be expressed as 
\begin{align} \label{eq:joint_gauss_mat}
X = \sum_{k=1}^n g_k A_k ,
\end{align}
where $g_k$'s are i.i.d. standard Gaussian variables, and $A_k$'s are (deterministic) $d \times d$ symmetric matrices. For such matrices, the classical non-commutative Khintchine (NCK) inequality \cite{LP86,LP91,Pis03} or matrix Chernoff bounds \cite{AW02,Oli10,Tro15} imply\footnote{We write $x \lesssim y$ if $x \leq Cy$ for a universal constant $C$ (see \Cref{subsec:notation_TCS}).} 
\begin{align} \label{eq:NCK_TCS}
\sigma(X) \lesssim \E \|X\|_{\op} \lesssim \sigma(X) \sqrt{\log d} ,
\end{align}
where the matrix standard deviation parameter $\sigma(X)$ is defined as
\begin{align*}
    \sigma(X)^2 := \| \E X^2 \|_{\op} = \Big\|\sum_{k=1}^n A_k^2 \Big\|_\op.
\end{align*}
Both the upper and lower bounds in \eqref{eq:NCK_TCS} are tight in general, and they can be further extended to asymmetric matrices via a standard dilation argument, and to sums of general random matrices, e.g. the matrix Bernstein inequality, either by a symmetrization argument \cite{Rud96o,Tro16}, or the general proof method in \cite{Oli10,Tro15}, or via a universality principle \cite{BvH22}. 
There have also been recent successful efforts in sharpening these bounds when the matrices $A_i$ are non-commutative \cite{BBvH23,BvH22,BCSvH24}. 

These matrix concentration inequalities have been successfully applied to a wide range of applications, such as numerical linear algebra \cite{Mah11,Woo14}, high dimensional statistics \cite{Mui09,MKT24}, compressed sensing \cite{FR13,CRPW12}, principle component analysis (PCA) \cite{ABBS14}, combinatorial optimization \cite{So11,NRV13}, discrepancy theory \cite{BJM23}, and coding theory \cite{Bri16,Gop18}. We refer to the monograph \cite{Tro15} for more details. 

\medskip
\noindent \textbf{Tensor Concentration Inequalities.} Despite their tremendous success, matrix concentration inequalities fall short in the study of many important problems, where the random objects that naturally appear in these applications are tensors in $(\R^d)^{\otimes r}$. Here, $d$ is called its {\em dimension} and $r$ is its {\em order}. 
Examples of such applications include locally decodable (or correctable) codes \cite{Yek12,Bri16,Gop18}, tensor decomposition \cite{AGH+14,GHJY15}, tensor completion \cite{JO14,MS18}, tensor PCA \cite{RM14,AMMN19}, community detection on hypergraphs \cite{KBG17,GD17,PZ21}, statistical inference in tensor ensembles \cite{KMW24}, hypergraph expanders \cite{ZZ21}, and dispersive partial differential equations \cite{BDNY24}. 
These applications raise a need to understand random tensors and their concentration properties under more general norms (beyond the natural counterpart of the matrix operator norm).\footnote{Sometimes, it is possible to apply matrix concentration inequalities to the matrices obtained by ``flatenning out'' the tensors (see \Cref{subsec:related_work}), but this approach usually relies on additional structural assumptions of the underlying problems and often don't achieve the best possible result.}

For a $d$-dimension, $r$-order tensor $T \in (\R^d)^{\otimes r}$, denote its $\ell_p$ injective norm\footnote{It is also natural to consider a symmetric version of the $\ell_p$ injective norm where the vectors $x_1, \cdots, x_r$ are forced to be the same, i.e., $\norm{T}_{\sym(\mathcal{I}_p)} \coloneqq \sup_{x \in \pdball} \abs{ T[x, \ldots, x]}$. For symmetric tensors, these two definitions are equivalent up to a factor depending only on the order $r$  (see \Cref{prop:symnormequiv}).} by
    \begin{equation}\label{eq:injnormdef_TCS}
    \norminp{T} \coloneqq \sup_{\norm{x_1}_p, \ldots, \norm{x_r}_p \leq 1} \langle T, x_1 \otimes \ldots \otimes x_r \rangle,  \end{equation}
where $d, r\geq 2$ are integers and $p\geq2$. Note that $r = 2$ and $p = 2$ correspond to the operator norm of matrices. $\ell_p$ injective norms for $p > 2$ appear in various applications, e.g. $q$-query locally decodable codes correspond to $\ell_q$ injective norms of $q$-order tensors (see \Cref{subsection:ldcs}), and have also been considered in the context of random matrices \cite{GHLP17,APSS24}.

In stark contrast to the significant progress made in studying matrix concentration inequalities for the operator norm, little is known for their tensor counterpart or for general $\ell_p$ injective norms, i.e. when either $r \neq 2$ and $p \neq 2$. 
Let us consider a mean-zero symmetric jointly Gaussian tensor, the natural analog of \eqref{eq:joint_gauss_mat}, which can be written as
\begin{align} \label{eq:Tasgaussianseries_TCS}
T = \sum_{k=1}^n g_k T_k ,
\end{align}
where $g_k \sim \mathcal{N}(0,1)$ are i.i.d. but $A_k \in (\R^d)^{\otimes r}$ are now symmetric order-$r$ tensors. 
Previously, progress in understanding \eqref{eq:Tasgaussianseries_TCS} has been made in the following special cases: 
\begin{enumerate}
    \item When $T$ has i.i.d. standard Gaussian entries (i.e., each $T_k$ corresponds to a single entry of $1$ in $T$) and $p=2$, it is well-known that $\E[\|X\|_{\ell_2}] \lesssim C_r \sqrt{d}$ for a constant $C_r$ depending on $r$ \cite{TS14,NDT10,DM24}. 
    \item When each tensor $T_k$ in \eqref{eq:Tasgaussianseries_TCS} is rank-$1$, i.e. $T_k = u_k^{\otimes r}$ for some $u_k \in \R^d$, and $p=2$, the argument based on majorizing measures in the classical work of Rudelson \cite{Rud96c} can be extended to derive an analog of the non-commutative Khintchine inequality (with a logarithmic factor in $n$ instead of $d$) \cite{Kevinsthesis}.
    \item In the different context of moment inequalities for Gaussian chaoses, a volumetric bound for the case of $p=2$ was proved by Latała~\cite{Lat06}.\footnote{While preparing this manuscript, it was brought to our attention that Latała's work \cite{Lat06} provides a tensor concentration inequality for $p=2$. While the inequality is different, it is based on an essentially equivalent covering estimate (that was proved with a rather different approach, see \Cref{rem:latalareference}). It may further be possible to obtain an analogue of \Cref{thm:masterthm_TCS} (for the specific case of $p=2$) without logarithmic factors by using the chaining argument in~\cite{Lat06}.}
\end{enumerate}

Beyond these special cases, the question of bounding the $\ell_p$ injective norm of \eqref{eq:Tasgaussianseries_TCS} beyond the trivial triangle inequality, i.e. $\E \|T\|_{\I_p} \lesssim \sum_{k=1}^n \|A_k\|_{\I_p}$, becomes elusive.
The main reason for this bottleneck is that all known proofs of \eqref{eq:NCK_TCS} use operator-theoretic tools,\footnote{The study of Schatten-$p$ norms (traces of powers) and the fact that they approximate the spectral norm well for large $p$ are key ingredients in the proof.} which are thus far unavailable when either $r \neq 2$ or $p \neq 2$.  In fact, there is no known geometric argument to establish \eqref{eq:NCK_TCS}, even for the operator norm of matrices, when the summands have rank higher than one. 
This is another core motivation of this paper, and in fact, a question of Talagrand's \cite[Section 16.10]{Tal14}.
Additionally, we note that this geometric perspective to concentration is crucial for several applications, such as hypergraph sparsification \cite{L23,JLS23} and quantum cryptography \cite{LMW24} (see \Cref{subsec:related_work} for more details).

\subsection{Our Contributions}
\label{subsec:contribution}

Our main contribution is a geometric approach, involving covering numbers and Gaussian process theory, to estimate the expected $\ell_p$ injective norm of a jointly Gaussian tensor $T$ as in \eqref{eq:Tasgaussianseries_TCS} for a full range of $r\geq 2$ and $ 2 \leq p <\infty$, where operator theoretic tools are unavailable. Our main result (\Cref{thm:masterthm_TCS} below) bounds this expected $\ell_p$ injective norm in terms of natural parameters associated with the random tensor $T$. 

\medskip
\noindent \textbf{Type-2 Constants of Banach Spaces.} Before formally defining these tensor parameters and stating our general result in \Cref{thm:masterthm_TCS}, we first present its implication to the type-2 constants of the Banach spaces corresponding to $\ell_p$ injective norms. It is considerably easier to state this specialized result and discuss its optimality.

\begin{defn}[Type-2 constant for $\ell_p$ injective norm of order $r$ tensors]\label{def:type2constant_TCS}
Given $r\geq 2$ an integer and $2 \leq p < \infty$, we define the type-2 constant  of the $\ell_p$ injective norm of order $r$ tensors  to be the smallest number $\CCC_{r,p}(d)$  such that for all positive integers $n$ and tensors $T_1, \ldots, T_n \in (\R^d)^{\otimes r}$, we have\footnote{The sum of squared norms on the right-hand side of \eqref{eq:type2def_TCS} have also appeared in \cite{Tom74,AW02} in the context of random matrices as a variance parameter, which is not as precise as the one given in~\eqref{eq:NCK_TCS} that gained attention after it appeared in matrix concentration inequalities~\cite{Oli10,Tro10}.}
\begin{equation}\label{eq:type2def_TCS}
    \E \Big\|\sum_{k=1}^n \eps_k T_k\Big\|_{\I_p}^2 \leq \CCC_{r,p}(d)^2 \cdot \sum_{k=1}^n \norminp{T_k}^2 ,
\end{equation}
where $\eps_1, \ldots, \eps_n$ are i.i.d. Rademacher variables, i.e., $ \eps_k = \pm 1$ with probability $\frac12$ each. 
\end{defn}

While we have been focusing on jointly Gaussian  tensors as in \eqref{eq:Tasgaussianseries_TCS}, a standard application of Jensen's inequality  on the magnitude of Gaussian random variables shows that moments of the norm of a Rademacher series are upper bounded by that of a Gaussian series. In particular, we have the following estimates. 

\begin{thm}[Type-2 constant bounds]\label{cor:boundtype2constant_TCS}
Let $r\geq 2$ an integer, $2 \leq p < \infty$, and $\CCC_{r,p}(d)$ the type-2 constant of the $\ell_p$ injective norm of order $r$ tensors as in \Cref{def:type2constant_TCS}. Then\footnote{The notation $f \lesssim_{r,p} g$ means that the inequality $ f \leq C_{r,p} g$ holds for some constant $C_{r,p}$ that only depends on $r$ and $p$ (see \Cref{subsec:notation_TCS}).}
\begin{equation}
d^{\frac12 - \frac1p}\cleq{r,p}  \CCC_{r,p}(d) 
\cleq{r,p} d^{\frac12- \frac1{\max\{p,2r\}}}\log d.
\end{equation}
\end{thm}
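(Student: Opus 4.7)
This corollary is a direct consequence of the more refined bound in \Cref{thm:masterthm} combined with the norm transition estimate in \Cref{cor:type2var}. The plan is to first reduce the Rademacher expectation in \Cref{def:type2constant_TCS} to a Gaussian expectation, then apply \Cref{thm:masterthm}, and finally replace the tensor variance parameters by the type-2 variance $\sigma_{T_2 \I_p}^2 = \sum_k \norminp{T_k}^2$ using \Cref{cor:type2var}.

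\textbf{Upper bound.} The first step is the standard Rademacher-to-Gaussian comparison: writing $g_k = |g_k|\,\mathrm{sign}(g_k)$ with $\mathrm{sign}(g_k)$ an independent Rademacher, Jensen's inequality applied conditionally on $(\eps_k)$ yields $\E \|\sum_k \eps_k T_k\|_{\I_p}^2 \lesssim \E \|\sum_k g_k T_k\|_{\I_p}^2$, and by Kahane--Khintchine for Gaussian vectors the $L^2$ moment is equivalent to the $L^1$ moment. It therefore suffices to bound $\E \norminp{\sum_k g_k T_k}$. Applying \Cref{thm:masterthm} gives
\[
d^{\frac1p - \frac12}\,\E \Big\|\sum_k g_k T_k\Big\|_{\I_p}
\lesssim_{r,p} (\log d)\,\pvar{1} + \max_{2 \le q \le r} \pvar{q}^{1/q}\pvar{0}^{(q-1)/q}.
\]
Now \Cref{cor:type2var} provides $\pvar{q} \lesssim_r d^{\max\{q/p - 1/2, 0\}}\sigma_{T_2 \I_p}$ for each $0 \le q \le r$. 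Since $p \ge 2$, for $q = 0, 1$ the exponent is $0$, so $\pvar{0}, \pvar{1} \lesssim_r \sigma_{T_2 \I_p}$. For $q \ge 2$, one computes
\[
\pvar{q}^{1/q}\pvar{0}^{(q-1)/q} \lesssim_r d^{\max\{1/p - 1/(2q), 0\}}\,\sigma_{T_2 \I_p}.
\]
The exponent $\max\{1/p - 1/(2q), 0\}$ is non-decreasing in $q$, so the maximum over $q \in \{2, \dots, r\}$ is attained at $q = r$, yielding exponent $\max\{1/p - 1/(2r), 0\}$. Substituting and rearranging gives
\[
\E \Big\|\sum_k g_k T_k\Big\|_{\I_p} \lesssim_{r,p} (\log d)\,d^{\frac12 - \frac1p + \max\{1/p - 1/(2r),\,0\}}\,\sigma_{T_2 \I_p} = (\log d)\,d^{\frac12 - \frac{1}{\max\{p,2r\}}}\,\sigma_{T_2 \I_p},
\]
which is the claimed upper bound.

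\textbf{Lower bound.} The plan is to exhibit an explicit witness (the one in \Cref{subsec:proof_lower_bound}). Take $n = d$ and define $T_k \in (\R^d)^{\otimes r}$ by $(T_k)_{1, \dots, 1, k} = 1$ and all other entries zero. Each $T_k$ has $\norminp{T_k} = 1$, so $\sigma_{T_2 \I_p} = \sqrt{d}$. For any Rademacher signs $(\eps_k)$, the tensor $\sum_k \eps_k T_k$ is supported on the coordinates $(1,\dots,1,k)$. Choosing test vectors $v_1 = \dots = v_{r-1} = e_1$ and $v_r = d^{-1/p}(\eps_1, \dots, \eps_d)$, each of unit $\ell_p$ norm, one computes $\langle \sum_k \eps_k T_k, v_1 \otimes \cdots \otimes v_r\rangle = d\cdot d^{-1/p} = d^{1-1/p}$ deterministically. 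Hence $\E \|\sum_k \eps_k T_k\|_{\I_p}^2 \ge d^{2 - 2/p}$, while $\sum_k \|T_k\|_{\I_p}^2 = d$, yielding $\CCC_{r,p}(d) \ge d^{1/2 - 1/p}$.

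\textbf{Main obstacle.} There is no substantial obstacle here: the heavy lifting is done by \Cref{thm:masterthm} and \Cref{cor:type2var}, and the argument is essentially bookkeeping of exponents. The only subtlety is the Rademacher-to-Gaussian conversion and ensuring the Kahane--Khintchine step does not cost dimension-dependent factors, but both are standard. The interesting feature is that the maximum in the second term of \Cref{thm:masterthm} is always attained at $q = r$ after applying \Cref{cor:type2var}, which is what produces the $\max\{p, 2r\}$ exponent in the final bound.
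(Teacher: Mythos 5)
Your proposal is correct and follows essentially the same route as the paper: the upper bound combines \Cref{thm:masterthm} (i.e., \Cref{thm:sharpestpnck}) with the norm transition estimate \Cref{cor:type2var}, exactly as in the paper's \Cref{thm:typetwopnck}, including the observation that the maximum over $q$ is attained at $q=r$; the lower bound is the same explicit construction given in \Cref{subsec:proof_lower_bound}. One minor point worth noting in your favor: you handle the Rademacher definition of $\CCC_{r,p}(d)$ a bit more carefully than the paper's lemma statement does, by observing that the lower bound $\|\sum_k \eps_k T_k\|_{\I_p}\geq d^{1-1/p}$ holds deterministically over sign choices, which directly bounds the Rademacher second moment required by \Cref{def:type2constant_TCS} without any Gaussian-Rademacher comparison; and your explicit appeal to Gaussian Kahane--Khintchine to pass from the $L^1$ bound in \Cref{thm:masterthm} to the $L^2$ bound in \Cref{def:type2constant_TCS} makes a step precise that the paper leaves implicit.
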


Note that for $p \geq 2r$, the upper and lower bounds in \Cref{cor:boundtype2constant_TCS} match up to a $\log d$ factor (in terms of dependence on $d$), and thus our results are nearly tight in this regime. 
In the regime of $p < 2r$, however, the bounds in \Cref{cor:boundtype2constant_TCS} differ by a $\poly(d)$ gap. 
For the correct polynomial dependence on $d$, both 
$d^{\frac12 - \frac1p}$ and $d^{\frac12- \frac1{\max\{p,r\}}}$ would be natural conjectures. Ruling either of them out seems to  be beyond reach of current techniques (see \Cref{subsection:ldcs} for a more detailed discussion).

\begin{rem} \label{rem:nckimprovedtype2_TCS}
    In the matrix case ($r=2$), it is possible to improve upon the upper bound in \Cref{cor:boundtype2constant_TCS} (in some regimes) by applying Hölder's inequality to NCK in \eqref{eq:NCK_TCS}:
    $$ \E \Big\|\sum_{k=1}^n g_k A_k \Big\|_{\I_p} \leq d^{1- \frac 2 p} \E \Big\|\sum_{k=1}^n g_k A_k\Big\|_2 \lesssim \sqrt{ \log d} \cdot d^{1- \frac 2 p}\sqrt{\sum_{k=1}^n \norminx{A_k}{2}^2} . $$
    Since $\norminx{A_k}{2} \leq \norminp{A_k}$, this inequality implies $\CCC_{2,p}(d) \lesssim \sqrt{ \log d} \cdot d^{1- \frac 2 p}$, which is better than the upper bound in \Cref{cor:boundtype2constant_TCS} when $2 \leq p \leq \frac{8}{3}$.
\end{rem}

\medskip
\noindent \textbf{Our Main Result.}
Our bounds are more accurately written in terms of tensor parameters that provide sharper control than the sum of squared injective norms in \eqref{eq:type2def_TCS}. 
To formally state our result in \Cref{thm:masterthm_TCS}, we first define these tensor parameters.

\begin{defn}[$\star_q$ product]\label{defn:introproduct_TCS}
    Let $A,B \in (\R^d)^{\otimes r}$ be tensors. For $0 \leq q \leq r$ we define a $d$-dimensional order $2r-2q$ tensor $A \star_q B$ with entries
    $$ (A \star_q B)_{i_1, \ldots, i_{2r-2q}} \coloneqq \sum_{j_1,\ldots, j_q =1}^d A_{i_1, \ldots, i_{r-q},j_1, \ldots, j_q} B_{j_1, \ldots, j_q,i_{r-q+1}, \ldots, i_{2r-2q}}.$$
\end{defn}
To illustrate this definition, in the matrix case ($r=2$) where $A,B \in \R^{d\times d}$ are symmetric matrices, $\star_q$ corresponds to familiar matrix operations: $A\star_0B=A \otimes B$ corresponds to tensor product (seen as an order $4$ tensor), $A\star_1B=AB$ to matrix multiplication, and $A\star_2B=\Tr(A^{\mathsf{T}}B)$ to the Hilbert-Schmidt inner product.

\begin{defn}[Tensor parameters]\label{defn:intovariances_TCS}
    Let $T \in (\R^d)^{\otimes r}$ be a symmetric random jointly Gaussian tensor and $2 \leq p < \infty $. For $0 \leq q \leq r$, we define the parameters
    $$ \pvar{q}^2 \coloneqq \norminp{ \E[ T \star_q T ]}.$$
\end{defn}

In the case of matrix operator norm ($p=r=2$), the parameter $\pvar{1}$ is the variance parameter in NCK \eqref{eq:NCK_TCS}, while $\pvar{0}$ also frequently appears in the literature\footnote{We define the weak variance in a different but equivalent form to that in~\cite{Tro15} (see \Cref{lemma:variances}).} and is often referred to as ``weak variance"~\cite{Tro15}. \Cref{lemma:variances} shows that for $p = r = 2$, 
\[ \evar{1}^2 = \Big\| \sum_{k=1}^n A_k^2 \Big\|_{\mathrm{op}} \qquad \textrm{and} \qquad \evar{0}^2 = \sup_{u,v \in \edball} \sum_{k=1}^n (u^\mathsf{T}A_k v)^2 .
\]
Formally stated, our main result provides the following upper bound:

\begin{thm}[Main Theorem]\label{thm:masterthm_TCS}
    Let $T \in (\R^d)^{\otimes r}$ be a symmetric random jointly Gaussian tensor with $\E[T] = 0$ as in \eqref{eq:Tasgaussianseries_TCS}, and $2 \leq p < \infty$. Then, 
    \begin{equation}\label{eq:intromainthm_TCS} 
    d^{\frac1p -\frac12} \E \norminp{T} \cleq{r,p} (\log d) \,  \pvar{1} +  \max_{2 \leq q \leq r} \pvar{q}^{\frac1q} \pvar{0}^{\frac{q-1}{q}},
    \end{equation}
    where the parameters on the right-hand side are those in \Cref{defn:introproduct_TCS} and~\ref{defn:intovariances_TCS}.
\end{thm}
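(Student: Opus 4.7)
The plan is to prove the bound by combining Dudley's entropy integral with a volumetric estimate of covering numbers of $\pdball$ under the natural distance $\md$ associated to the Gaussian process $u \mapsto \langle T, u^{\otimes r}\rangle$. As a first step, polarization (\Cref{prop:symnormequiv}) reduces the asymmetric injective norm $\norminp{T}$ to the symmetric form $\sup_{u \in \pdball}\abs{T[u,\ldots,u]}$ up to a constant depending only on $r$, and a symmetric embedding argument then replaces $T$ by $\sym(T)$ so that we may assume $T$ is both symmetric \emph{and} diagonal-free. The diagonal-free property is essential: it is what forces the second moment of $\langle T, \mathbf{b}^{\otimes r}\rangle$ for $\mathbf{b}$ uniform in $\pdball$ to decay as $d^{-2r/p}\normf{T}^2$ (\Cref{lemma:expectedpnormcontraction}), via the vanishing of cross-diagonal terms by a sign-flipping symmetry. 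Dudley's entropy integral then reduces everything to estimating $\int_0^\infty \sqrt{\log \mathcal{N}(\pdball,\md,\eps)}\,d\eps$.

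The core task is the covering number estimate, which I carry out following the volumetric scheme of \Cref{sec:technique}. Fix a scale $t > 0$, take a maximal $\eps$-separated set $\{x_i\}_{i=1}^N$ in $\pdball$ under $\md$, and consider the disjoint $\md$-balls $B_{\md}(x_i,\eps/2)$. For a random $\mathbf{b}$ uniform in $\pdball$, a direct expansion using symmetry and the diagonal-free moment estimate applied to each contraction $T_k\,x_i^{\otimes(r-q)}$ gives $\E_{\mathbf{b}}\md(x_i, x_i + t\mathbf{b}) \lesssim_r \max_{1\le q\le r} d^{-q/p}t^q \pvar{q}$, so Markov shows that at least a quarter of $x_i + t\,\pdball$ lands in $B_{\md}(x_i,\eps/2)$ when $\eps \asymp_r \max_q d^{-q/p}t^q\pvar{q}$. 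Crucially, instead of the crude containment $x_i + t\,\pdball \subseteq (1+t)\pdball$, the 2-uniform convexity of $\ell_p$ (\Cref{thm:weakparallelogram}, after Ball--Pisier) guarantees that at least half of $x_i + t\,\pdball$ lies in the much smaller set $\sqrt{t^2+p-1}\cdot\pdball$. A union bound produces disjoint regions $G_i$ fitting in $\sqrt{t^2+p-1}\cdot\pdball$, and comparing volumes yields $\mathcal{N}(\pdball,\md,\eps) \lesssim 4\,e^{(p-1)d/(2t^2)}$. Inverting the relation $\eps \asymp_r \max_q d^{-q/p}t^q\pvar{q}$ gives the usable bound $\sqrt{\log\mathcal{N}(\pdball,\md,\eps)} \lesssim_r 1 + \sqrt{p}\,d^{1/2-1/p}\max_q \eps^{-1/q}\pvar{q}^{1/q}$.

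The last step is to integrate. Since the $\md$-diameter of $\pdball$ is $O(\pvar{0})$ and the bound above diverges near $0$ through the $q=1$ term $\eps^{-1}\pvar{1}$, I split the integration at a threshold $\xi \asymp \pvar{0}/\sqrt{d}$, using the elementary $\ell_p$-net bound $\sqrt{\log\mathcal{N}}\lesssim \sqrt{d\log(\pvar{0}/s)}$ (\Cref{cor:pdlogcovernumberspnorm}) below $\xi$ and the volumetric bound above $\xi$. The $q=1$ contribution yields the factor $\log(d)\pvar{1}$, while each $q \geq 2$ integrates to $\Delta^{1-1/q}\pvar{q}^{1/q} \lesssim \pvar{0}^{(q-1)/q}\pvar{q}^{1/q}$, exactly reproducing the right-hand side of \eqref{eq:intromainthm_TCS}. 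After unwinding the reduction to symmetric diagonal-free tensors via \Cref{lemma:sympvar}, which controls the variance parameters of $\sym(T)$ by those of $T$ up to $r$-dependent constants, the bound descends to arbitrary symmetric $T$. The main obstacle is the ball-halving: the naive containment in $(1+t)\pdball$ would cost an extra $t$-factor in the volume ratio for large $t$ and destroy the integrability of the entropy integral; 2-uniform convexity of $\ell_p$ is really doing the heavy lifting here and is the reason the constant in front has a factor of $\sqrt{p}$, tying the range of admissible $p$ to a quantitative convexity statement for $\ell_p$.
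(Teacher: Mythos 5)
Your proposal follows essentially the same route as the paper: symmetric embedding to reduce to diagonal-free symmetric tensors, the diagonal-free second-moment estimate for uniformly random points in $\pdball$, the volumetric covering bound with the $2$-uniform-convexity "ball-halving" step giving $\mathcal{N}(\pdball,\md,\cdot)\lesssim e^{(p-1)d/2t^2}$, Dudley's integral split at $\xi\asymp\pvar{0}/\sqrt{d}$, and the transfer of variance parameters through \Cref{lemma:sympvar}. The only cosmetic difference is invoking polarization (\Cref{prop:symnormequiv}) before the symmetric embedding, whereas the paper bypasses polarization entirely via the exact identity in \Cref{lemma:symembedsymmax}; this does not affect correctness.
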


We discuss applications and connections of this more general result to several other problems, such as tensor PCA, various models of random tensors, and locally decodable codes, in the subsequent \Cref{subsec:applications}.

\begin{par}   
    We finish this subsection with a quick remark about symmetric tensors. If one does not strive for an optimal dependence on the order $r$, the symmetry assumption in \Cref{thm:masterthm_TCS} is without loss of generality via symmetric embeddings of tensors (see \Cref{subsection:symbed}). 
\end{par}

\subsection{Applications and Connections}
\label{subsec:applications}

\noindent \textbf{Tensor PCA.} The {\em tensor principle component analysis} (tensor PCA) problem was introduced by Montanari and Richard \cite{RM14} as a model for studying statistical inference under observations of higher order interactions among data elements. In this model (also known as the {\em spiked tensor} model), one is given $\lambda >0$, a signal $v \in \{\pm 1\}^d$, and noisy observations of the $r$-order tensor 
\[
Y(\lambda) = \lambda v^{\otimes r} + T, 
\]
where $T \in (\R^d)^{\otimes r}$ is a random noise tensor with i.i.d. standard Gaussian entries. 
The key question in this context is to understanding for which signal-to-noise ratio $\lambda$ can one {\em detect} the presence of the signal $v$, and reliably {\em recover} it. 

In the case where the noisy tensor $Y(\lambda)$ is fully observed, it is known that information theoretically, one can detect and recover the signal $v$ when $\lambda \gg d^{(1-r)/2}$ \cite{RM14,LML+17}. The sum-of-squares (SoS) hierarchy or Kikuchy hierarchy provide efficient algorithms for detection and recovery when $\lambda \gg d^{-p/4}$ \cite{HSS15,WEAM19,KMW24}, and SoS lower bounds provide evidence that no polynomial time algorithm can do better \cite{HSS15,HKP+17}. Asymptotically optimal information-theoretic thresholds for detection and recovery are also shown when the signal $v$ is drawn from various priors \cite{PWB20}. 

We study the tensor PCA problem in the more general censored (or partial information) model, where potentially only part of the entries of the noisy tensor $Y(\lambda)$ is available. This type of partial information models have been well-studied for matrices (corresponding to graphs), where the statistical threshold is characterized by properties of the underlying graphs (e.g., \cite{ABBS14,C15,BCSvH24}). To the best of our knowledge, there is no non-trivial bound known in the censored model for tensor PCA, which might be partially due to a lack of understanding of random tensors beyond the i.i.d. entry or rank-$1$ cases.

As an application, our result in \Cref{thm:masterthm_TCS} leads to the first non-trivial information-theoretic threshold for detection in the setting of censored tensor PCA, in terms of natural quantities associated with the hypergraph corresponding to the observed entries of $Y(\lambda)$. 
This result also generalizes the information-theoretically optimal threshold for the full information setting \cite{RM14,LML+17} (up to a logarithmic factor).
We postpone the formal definition of the censored tensor PCA problem to \Cref{subsec:ind_ent_tensor_PCA} and the statement of this result to \Cref{thm:lambdastatpca}.

\medskip
\noindent \textbf{Nonhomogenous Independent Entry Tensors.} As a key intermediate step towards proving our result for censored tensor PCA in \Cref{thm:lambdastatpca}, we obtain improved bounds for the $\ell_2$ injective norm of a random tensor with nonhomogeneous independent entries\footnote{This means the entries of the random tensor are independent (up to symmetry) Gaussian random variables with arbitrary variances.} (see \Cref{thm:indepentryapplication} and \Cref{cor:hypergraphindepentry} for the formal statements). 

The nonhomogeneous model, a special case of the jointly Gaussian model in \eqref{eq:Tasgaussianseries_TCS} and its Rademacher version, has received significant attention in the modern development of random matrix theory \cite{S00,Lat05,BvH16,vH17b,LvHY18}. It is not only among the first natural random matrix models (beyond classical Wigner ensembles) where sharper bounds than NCK \eqref{eq:NCK_TCS} have been attained, but also a testbed where geometric approaches have been successfully applied \cite{vH17,Lat24,GHLP17,APSS24}. 

In contrast, to the best of our knowledge, there has not been any notable developments of the nonhomogeneous independent model for tensors (see \Cref{subsection:refineindepentry} for further discussions). Despite not relying on the independence structure of the entries, our result in \Cref{thm:masterthm_TCS} implies the first non-trivial bound for this natural model (\Cref{thm:indepentryapplication} and \Cref{cor:hypergraphindepentry}).  It is an intriguing open problem to further improve on these bounds by taking advantage of the independence of entries in the nonhomogeneous independent entry model (which might also lead to better bounds for censored tensor PCA). 

\medskip
\noindent \textbf{Structured Random Matrices/Tensors.} 
Beyond the independent entry model, our result in \Cref{thm:masterthm_TCS} also implies better bounds for more structured models, even in the case of random matrices. We demonstrate such an application in \Cref{subsec:matching_mat}, where each $T_i$ in \eqref{eq:Tasgaussianseries_TCS} corresponds to the adjacency matrix of a matching $M_i$. We show that our bound in \Cref{thm:masterthm_TCS} is superior than previous bounds when $M_i$ have certain structures.

\medskip
\noindent \textbf{Type-$2$ Constants of Banach Spaces.} The (Rademacher) type and cotype and their corresponding constants are fundamental quantities associated with Banach spaces that capture a notion of orthogonality, e.g., \cite{AK06,LT13}. They have been widely studied in the literature for various Banach spaces, and are connected to applications such as locally decodable codes \cite{BNR12}. As already discussed in \Cref{subsec:contribution}, our main result in \Cref{thm:masterthm_TCS} implies better bounds for the type-2 constants of the Banach spaces corresponding to $\ell_p$ injective norms of tensors, which are tight up to constants when $p \geq 2r$.

\medskip
\noindent \textbf{Locally Decodable Codes.} Locally decodable codes (LDC) are ones where each message bit can be decoded with non-trivial probability by (randomly) accessing only a small number $q$ of bits of the codeword. They are widely studied and are closely connected to many other central questions in computer science \cite{Yek12,Gop18} (also see \Cref{subsection:ldcs}).  
A natural question for LDCs is the smallest achievable blocklength $n$ for different values of $q$ (in terms of the message length $k$). Although the case of $q = 2$ is quite well-understood, there is a significant gap between the current upper and lower bounds when $q \geq 3$.  

While the bound in \Cref{thm:masterthm_TCS} currently falls short of providing improved bound for LDCs, we show that a natural strengthening of \Cref{thm:masterthm_TCS} (as stated in \Cref{conj:actualtype2constant}) would imply better upper bound on the blocklength $n$ for LDC with an odd number of queries $q > 3$. Notably, such an improvement does not follow from the best possible bound on the $\ell_2$ injective norm for tensor concentration inequalities; it crucially relies on considering the $\ell_q$ injective norms of order-$q$ tensors. We postpone a more detailed discussion of this connection to LDCs to \Cref{subsection:ldcs}.

\subsection{Further Related Work}
\label{subsec:related_work}

\medskip
\noindent \textbf{Applying Matrix Concentration Inequalities to Tensor Applications.} In applications where (random) tensors are natural objects of study, matrix concentration inequalities can still be used by first ``flattening out'' the random tensor into a matrix that captures certain spectral properties. For example, matrix concentration inequalities have been successfully applied to the higher-dimensional Kikuchy matrix (corresponding to tensor problems) in applications such as tensor PCA \cite{WEAM19}, refuting semi-random and smoothed  CSPs \cite{GKM22}, hypergraph Moore bound \cite{HKM23}, lower bound for 3-LDCs and LCCs \cite{AGKM23,KM24}. Other applications of tensor flattening approaches include \cite{GM15,HSSS16,BDNY24}. The success of these approaches often rely on additional structural properties of the underlying problem. 

\medskip
\noindent \textbf{Geometric Perspectives of Concentration Inequalities.} Besides being interesting and fundamental mathematical questions in their own right, developing geometric  approaches for matrix and tensor concentration inequalities have strong motivations from applications where either the objects are not matrices (e.g., tensors, norms, etc.), or when the norms are not the operator norm. For example, they are useful for hypergraph sparsification \cite{L23,JLS23}, sparsifying sums of norms \cite{JLLS23}, sparsifying generalized linear models \cite{JLLS24}, breaking quantum cryptography \cite{LMW24}, and studying nonhomogeneous independent entry random matrices \cite{vH17,GHLP17,Lat24,APSS24}.

\subsection{Notation and Terminology}
\label{subsec:notation_TCS}

We use $\|\cdot\|_p$ to denote the $\ell_p$ norm in $\R^d$ and $\pdball$ its unit ball. 
For $u_1, \ldots , u_r \in \R^d$, the tensor $u_1 \otimes \cdots \otimes u_r \in (\R^{d})^{\otimes r}$ is given by $(u_1 \otimes \cdots \otimes u_r)_{i_1, \ldots, i_r} = \prod_{q=1}^r (u_q)_{i_q}$; in particular, denote $v^{\otimes r} := v \otimes \cdots \otimes v$.
For tensors $A, B \in (\R^d)^{\otimes r}$, define 
\[
\langle A, B \rangle \coloneqq \sum_{i_1, \ldots, i_r=1}^d A_{i_1, \ldots, i_r} B_{i_1, \ldots, i_r} = A \star_r B ,
\]
where we recall $\star_q$ from \Cref{defn:introproduct_TCS}. We use $A \odot B \in (\R^d)^{\otimes r}$ to denote the entry-wise product.
We will also see tensors as multilinear maps on $\R^d$: 
\[
A[u_1, \ldots, u_r] \coloneqq \sum_{i_1,\ldots, i_r=1 }^d A_{i_1, \ldots, i_r}(u_1)_{i_1} \cdots (u_r)_{i_r} = \langle A, u_1 \otimes \cdots \otimes u_r \rangle 
\]
This allows contracting tensors with vectors. For $v \in \R^d$, we recursively define $Av^{\otimes 0} \coloneqq A$ and $Av^{\otimes q} \coloneqq (Av^{\otimes {q-1}})[v, \cdot, \ldots, \cdot]$ for $q \leq r$.
Note that $Av^{\otimes q}$ is an order $r-q$ tensor.
The Frobenius norm is given by $\normf{A} \coloneqq \sqrt{\langle A, A \rangle}$. 

We denote $[r] := \{ 1, \ldots, r \}$, and the set of all permutations on $[r]$ by $S_r$. A tensor $A$ is called \emph{symmetric} if for all permutations $\tau \in S_r$ and all $1 \leq i_1, \ldots, i_r \leq d$, we have $T_{i_1, \ldots i_r} = T_{i_{\tau(1)}, \ldots i_{\tau(r)}} $. 
The symbols $\lesssim, \lesssim_r, \lesssim_{r,p}$ are inequalities that hold up to a factor that are respectively universal constants, constants depending on $r$, constants depending on $r$ and $p$. If both $\lesssim$ and $\gtrsim$ hold, we also use the symbols $\asymp, \asymp_r, \asymp_{r,p}$ in a similar fashion.

\subsection{Technical Overview}
\label{sec:technique_TCS}

In this section, we give a brief overview of the techniques used to derive our results. We focus on the proof of \Cref{thm:masterthm_TCS}, from which \Cref{cor:boundtype2constant_TCS} can be deduced by upper bounding the tensor parameters via comparison inequalities between different tensor norms (see \Cref{thm:boundedmultilinear}). 
A formal presentation of our proofs can be found in \Cref{sec:proof}.

\medskip
\noindent \textbf{The Gaussian Process View.} 
For a symmetric jointly Gaussian tensor $T \in (\R^d)^{\otimes r}$ as in \eqref{eq:Tasgaussianseries_TCS}, its $\ell_p$ injective norm  can be viewed as the supremum of a gaussian process\footnote{Formally, one needs to use the equivalence between symmetric and asymmetric versions of the $\ell_p$ injective norms given in \Cref{prop:symnormequiv}, which only loses a constant factor depending on $r$.}
\[
\|T\|_{\I_p} \approx \E \sup_{u \in \mathbb{B}_p^d} g_u , \quad \text{where } g_u := \langle T, u^{\otimes r} \rangle .
\]
As we do not aim for optimal logarithmic factors, we can apply Dudley's entropy integral (see \Cref{thm:entropyintegral}) to bound the expected supremum of this Gaussian process as
    \begin{align} \label{eq:dudley_int_technique}
    \E \sup_{u \in \pdball} g_u \lesssim \int_0^\infty \sqrt{\log \mathcal{N} (\pdball,\md, \eps)} \,d\eps ,
    \end{align}
where $\mathcal{N} (\pdball,\md, \eps)$ denotes the {\em covering number} of $\pdball$ at scale $\varepsilon$ w.r.t. to the (semi-)metric $\md(u,v) := \E [(g_u - g_v)^2]^{\frac12}$, i.e. it is the smallest cardinality of an $\eps$-covering of $\pdball$ in $\md$. We estimate this covering numbers using a geometric argument in the following. 

\medskip
\noindent \textbf{Warm-Up: Operator Norm for Matrices.}  We first present a covering number bound in the simplified setting where (1) $p = r = 2$, i.e., the operator norm of matrices, (2) $n = d$, and (3) each $\|T_k\|_{\op} \leq 1$. 
In this special case, the distance function $\md$ is given by
\begin{align} \label{eq:warm_up_dist}
\md(u, v)^2 = \sum_{k=1}^d \big(\langle T_k, uu^\top \rangle - \langle T_k, vv^\top \rangle \big)^2 . 
\end{align}
The proof in this case already contains many of the key ideas behind the proof of \Cref{thm:masterthm_TCS}, and the quantities are more explicit. 
We may further assume $T_k$'s are {\em diagonal-free}, i.e. $\diag(T_k) = 0$, by replacing $T_k$ with its Hermitian dilation (see \Cref{subsection:symbed}).

For small $\varepsilon \in (0,1]$, the standard estimate $\NNN(B_2^d, \md, \varepsilon) \leq (3d/\varepsilon)^d$ already suffices  (see \Cref{cor:pdlogcovernumberspnorm}), so we focus on $\varepsilon > 1$. In this regime, we prove the bound 
\begin{align} \label{eq:warm_up_cov_bound}
    \NNN(B_2^d, \md, \varepsilon) \lesssim e^{O(d / \varepsilon)} .
\end{align}
Once \eqref{eq:warm_up_cov_bound} is established, plugging it into \eqref{eq:dudley_int_technique} gives\footnote{Here, the diameter of $B_2^d$ in metric $\md$ is $\sqrt{d}$, so the integration in \eqref{eq:dudley_int_technique} can be truncated at $\sqrt{d}$.} a bound of $\E\|T\|_{\op} \lesssim d^{3/4}$, which is weaker than the $O(\sqrt{d \log d})$ bound achieved by non-commutative Khintchine in \eqref{eq:NCK_TCS}. We remark that up to a logarithmic factor, the NCK bound would be recovered if one could prove the better covering number bound of $\NNN(B_2^d, \md, \varepsilon) \lesssim e^{O(d / \varepsilon^2)}$ instead of \eqref{eq:warm_up_cov_bound}. This might also provide a proof strategy that recovers NCK up to logarithmic factors, getting close to answering Talagrand's question in \cite{Tal14}, and is left as an open problem.

In the following, we present the key ideas behind the proof of \eqref{eq:warm_up_cov_bound}. To bound the covering numbers $\NNN(B_2^d, \md, \varepsilon)$, it suffices to bound the size of a maximal set $D = \{x_1, \cdots, x_N\}$ that is $\varepsilon$-separated in $\md$, i.e. $\md(x_i, x_j) > \varepsilon$. In particular, we show that 
\begin{align} \label{eq:warm_up_pack_bound}
    N \lesssim e^{O(d / \varepsilon)} ,
\end{align}
which immediately implies \eqref{eq:warm_up_cov_bound} by a standard packing to covering argument.

\medskip
\noindent \textbf{Volume Counting: A First Attempt.} We establish \eqref{eq:warm_up_pack_bound} via a volume counting argument. Specifically, note that the regions $B_{\md}(x_i, \frac{\varepsilon}{2}) := \{x \in \R^d: \md(x, x_i) < \varepsilon/2\}$ are mutually disjoint, since $d(x_i, x_j) > \varepsilon$. Consequently, it makes sense to try to define regions $G_i \subseteq B_{\md}(x_i, \frac{\varepsilon}{2})$ which satisfy the following conditions: 
\begin{enumerate}
    \item [(a)] The volumes of $G_i$'s are ``large''.
    \item [(b)] $G_i$ are all enclosed in a set $K$ whose volume is ``small''. 
\end{enumerate}
Once these conditions are satisfied, as $G_i$'s are disjoint, one immediately obtains the bound 
\[
N \leq \frac{\vol \big(\cup_{i \in [N]} G_i \big)}{\min_{i \in [N]} \vol(G_i)} \leq \frac{\vol(K)}{\min_{i \in [N]} \vol(G_i)} .
\]
So the main question is: how should we define $G_i$'s satisfying both conditions (a) and (b)?  

As a first attempt, one might want to define $G_i$ to be $B_{\md}(x_i, \frac{\varepsilon}{2}) \cap B_2^d$, i.e. choose $K = B_2^d$ in (b). This choice is rather natural since all points in $D$ lie inside $B_2^d$. Unfortunately, this instinctive idea does not work for the following reason. When $T_1 = \cdots = T_d$ (with $\|T_i\|_i=1$) and $x_j$ is the maximum eigenvector of the $T_i$'s, we have
\[
\md(x_j, u)^2 = \sum_{k=1}^d \big(\langle T_k, x_jx_j^\top \rangle - \langle T_k, uu^\top \rangle \big)^2 = d (1 - \langle T_k, uu^\top \rangle)^2.  
\]
For the distance to be smaller than $\varepsilon/2$, one must have $\langle T_k, uu^\top \rangle \geq 1 - \varepsilon/2d$, which means that  $u$ has to lie very close to $x_j$. Consequently, the $G_j$ defined this way have volume too small (aka failing condition (a)) to obtain a non-trivial bound on $N$ in \eqref{eq:warm_up_pack_bound}. 

\medskip
\noindent \textbf{Volume Counting Outside the Euclidean Ball.} Surprisingly, we are able to bypass the aforementioned issue by carrying out the volume counting argument {\em far outside} the Euclidean ball $B_2^d$, despite that we are bounding the covering number inside $B_2^d$.

In particular, we define the regions $\widetilde{G}_i := B_{\md}(x_i, \frac{\varepsilon}{2}) \cap (x_i + R B_2^d)$, where we choose $R > 1$ (to be fixed later) so that $\vol(\widetilde{G}_i) \geq \frac{3}{4} \vol(R B_2^d)$. This guarantees the  satisfaction of condition (a). For condition (b), there is of course the trivial bound of $\widetilde{G}_i \subset (R+1) B_2^d$. 
Nonetheless, this trivial bound is not strong enough to establish \eqref{eq:warm_up_cov_bound}. 

To achieve the desired bound, it turns out we can further strengthen (b) by ``slicing the ball $x_i + R  B_2^d$ and define $G_i := \widetilde{G}_i \cap \{x \in x_i + R  B_2^d: \langle x - x_i, x_i \rangle > 0\}$. This additional linear constraint implies that $G_i \subseteq \sqrt{R^2 + 1} \cdot B_2^d$ (which is considerably smaller than $(R+1) B_2^d$), while we still have $\vol(G_i) \geq \frac{1}{4} \vol(R B_2^d)$. It follows that 
\begin{align} \label{eq:warm_up_vol_count}
N \leq \frac{\vol(\sqrt{R^2 + 1} B_2^d)}{\frac{1}{4} \cdot \vol(R B_2^d)} \lesssim \Big( \frac{R^2 + 1}{R^2} \Big)^{d/2} \leq e^{d/2R^2} .
\end{align}
So it remains to decide how large  $R$ can be set so that $\vol(\widetilde{G}_i) \geq \frac{3}{4} \vol(R B_2^d)$ holds.  

\medskip
\noindent \textbf{Determining the Radius.} To this end, we estimate the expected distance of a uniformly random vector $\rv{y} \in x_i + R B_2^d$ from $x_i$ as
\begin{align*}
\E [\md(x_i,\rv{y})^2 ] 
& = \sum_{k=1}^d \E[(\langle T_k, \rv{y} \rv{y}^\top \rangle - \langle T_k, x_i x_i^\top \rangle)^2] 
\lesssim \sum_{k=1}^d \E\Big[R^2 \langle T_k, x_i \rv{b}^\top \rangle^2 +  R^4  \langle T_k, \rv{b} \rv{b}^\top \rangle^2 \Big]  ,
\end{align*}
where $\rv{b}$ is uniformly random from $B_2^d$. We skip through the precise computation (see \Cref{lemma:expectedpnormcontraction} and \Cref{cor:expectedpdistance} for more general bounds), but it is possible to bound 
\[
\E[\langle T_k, x_i \rv{b}^\top \rangle^2], \ \E[\langle T_k, \rv{b} \rv{b}^\top \rangle^2] \lesssim \frac{1}{d} . 
\]
We remark that it is crucial that $\rv{b}$ has sufficient ``symmetry'' for this computation to go through (as $T_k$'s can be quite arbitrary), and this is another technical reason why we have to work outside the unit ball $B_2^d$. 
Consequently, we have $\E [\md(x_i,\rv{y})] \lesssim R^2$. Thus choosing $R = \sqrt{\varepsilon}/C$ for a large enough constant $C$, Markov's inequality suggests that $\vol(\widetilde{G}_i) \geq \frac{3}{4} \vol(R B_2^d)$. Plug this choice of $R$ into \eqref{eq:warm_up_vol_count} immediately implies our desired covering number bound in \eqref{eq:warm_up_pack_bound} and \eqref{eq:warm_up_cov_bound}. 

Note that as $\varepsilon$ approaches the diameter $\sqrt{d}$ (i.e., the Dudley integration upper limit), the choice of $R$ grows to $d^{1/4}/C$ which is substantially larger than the unit ball $B_2^d$ that we aim to bound the covering numbers for in the first place. It is quite surprising that in such enormous space outside of the unit ball, sufficient metric information between the points in $D$ gets preserved for one to still deduce a meaningful bound on its cardinality!

\medskip
\noindent \textbf{Generalization to Tensors and $p \geq 2$.} The argument above provides a systematic avenue for estimating $\ell_p$ injective norms of tensors. This approach is quite mechanical -- all one has to do is to (1) upper bound the expected distance $\E[\md(x_i, \rv{y})]$ for a uniform $\rv{y} \sim x_i + R B_p^d$ to determine the largest possible radius $R$ for (a), and (2) find a way to ``slice'' $B_p^d$ to obtain a better volume bound in (b) than the trivial $\vol((R+1) B_p^d)$. 

We show how to do (1)  in \Cref{lemma:expectedpnormcontraction} and \Cref{cor:expectedpdistance} through a direct computation, and (2) in \Cref{cor:halvinglpball} via a consequence of the $2$-uniform convexity for $\ell_p$ norms in \Cref{thm:weakparallelogram}. 
Combine these bounds with our general machinery leads to \Cref{thm:masterthm_TCS}. The full details of our proofs are postponed to \Cref{sec:proof}.

\section*{Acknowledgements}
The authors would like to thank March Boedihardjo, Tim Kunisky, Antoine Maillard, Shahar Mendelson, Petar Nizić-Nikolac and Victor Reis for helpful discussions and insights. A special thanks to Ramon van Handel for bringing our attention to~\cite{Lat06}.

\bibliographystyle{alpha}
\bibliography{bib.bib}

\end{document}